\date{\today}
\newtheorem{theorem}{Theorem}[section]
\newtheorem{lemma}[theorem]{Lemma}
\newtheorem{proposition}[theorem]{Proposition}
\newtheorem{corollary}[theorem]{Corollary}
\theoremstyle{definition}
\newtheorem{definition}[theorem]{Definition}
\newtheorem{example}[theorem]{Example}
\theoremstyle{remark}
\newtheorem{remark}[theorem]{Remark}
\def\<{\langle}
\def\>{\rangle}
\def\a{\alpha}
\def\b{\beta}
\def\ci{\circ}
\def\c{\cdot}
\def\D{\Delta}
\def\g{\gamma}
\def\o{\otimes}
\def\v{\varepsilon}
\def\vp{\varphi}
\def\<{\langle}
\def\>{\rangle}
\def\m{\mu}
\begin{document}

\begin{center}

{\huge{\bf Pivotal and Ribbon Entwining Datums}}

\end{center}

\ \\
\begin{center}
{\bf Xiaohui ZHANG$^1$, Wei WANG$^{2}$, Xiaofan ZHAO$^3$
}
\end{center}

\ \\
\hspace{-0,5cm}$^{1}$ zxhhhhh@hotmail.com, School of Mathematical Sciences, Qufu Normal University, Qufu Shandong 273165, P. R. China.\\
\hspace{-0,5cm}$^{2}$ weiwang2012spring@yahoo.com, Department of Mathematics, Southeast University, Nanjing Jiangsu 210096, P. R. China.\\
\hspace{-0,5cm}$^{3}$ zhaoxiaofan8605@126.com, College of Mathematics and Information Science, Henan Normal University,
Xinxiang Henan 453007, P. R. China.\\

{\bf Abstract.} Let $(C,A,\varphi)$ be an entwining structure over $k$. In this paper, we introduce the notions of the pivotal entwined datums and ribbon entwined datums to generalize (co)pivotal Hopf algebras and (co)ribbon Hopf algebras. These notions give necessary and sufficient conditions for the category of entwined modules to be a pivotal category and ribbon category.
\\

{\bf Keywords.} entwining structure; rigid category; pivotal category; ribbon category


\section{\textbf{Introduction}}
\setcounter{equation}{0}

Monoidal category theory played an important role in the theory
of knots and links and the theory of quantum groups. Through the reconstruction theory and Tannakian duality (\cite{ar}, \cite{ul}), quantum groups and monoidal categories are correspondence with each other.
There are many kinds of monoidal categories with additional structure - braided, rigid, pivotal, balanced, ribbon, etc., and many of them have an associated form in low dimensional topology theory and knot theory.
For example, ribbon category (see \cite{TRV}) is based on the isotopy invariants of
framed tangles;
spherical category (see \cite{BJW} and \cite{PD}) is based on the Turaev-Virostate sum model invariant of a closed piecewise-linear 3-manifold.
From the reconstruction theoretical point of view, a ribbon (resp. pivotal) category is equivalent to the category of (co)modules over a (co)ribbon (resp. pivotal) Hopf algebras (or its generalizations)(see \cite{jB}, \cite{ABAV} and \cite{ZZW}).

Pivotal category (or equivalently, a sovereign category) is introduced by Freyd and Yetter (see \cite{PD}, \cite{DY}) in 1992.
A pivotal structure in a rigid category is a monoidal natural transformation between the identity functor and the double (right) dual functor.
A pivotal Hopf algebra has by definition
a group-like element that implements the square of the antipode (called a
pivot) (see \cite{AAIT}).
Note that pivot could induce a pivotal structure in the representation category.
In 1995, Maltsiniotis (\cite{gm}) studied pivotal monoidal categories and proposed a new equivalent definition (which avoids the choice
of an autonomous structure), and showed that an axiom was redundant (see \cite{jB}).
Note that a theorem
of Deligne (Proposition 2.11 in \cite{DY}) showed that there is a sovereign structure on an autonomous braided category if and only if there is a twist (or a balanced structure) on it.
Thus a braided pivotal category is the same thing as a balanced autonomous category (see Corollary 4.21, \cite{Ps}).

Recently, pivotal structures have been studied by many scholars. In 2001, Bichon (\cite{jB}) introduced the (co)sovereign Hopf algebras and described the relation between the cosovereign Hopf algebras and sovereign categories. In 2004, Schauenburg (see \cite{SC}) introduced a categorical definition of degree 2 Frobenius-Schur indicators and gave
a different proof of the Frobenius-Schur Theorem for quasi-Hopf algebras based on the pivotal structures.
In 2012, Ng (see \cite{Ng}) researched a family of
pivotal finite-dimensional Hopf algebras with unique pivotal element.
In 2015, Kenichi Shimizu (see \cite{sk1,sk2}) use the theory of the pivotal cover to lay out a categorial approach to generalized
indicators for a non-semisimple Hopf algebra. In 2016, he also classified the pivotal structures of the Drinfeld center of
a finite tensor category (see \cite{sk3}).

It is interesting and important to find more new pivotal structures.
How to get more new examples of pivotal category? This is the first motivations of the present article. In order to investigate this question, we consider the necessary and sufficient condition for the category of entwined
modules to be a pivotal category.

Entwining structure is proposed by Brzezinski and Majid in \cite{ts} to define coalgebra principal bundles. An entwining structure over a monoidal category $\mathcal{C}$ consists of an algebra $A$, a coalgebra
$C$ and a morphism $\varphi:C\o A\rightarrow A \o C$ satisfying some axioms. The entwining modules are both $A$-modules and $C$-comodules, with compatibility relation given by $\varphi$. Note that the definition of entwined modules generalizes lots of important modules such as Hopf modules, Doi-Hopf modules, and Yetter-Drinfeld modules.

The second motivations of our paper raised from the study of how to get the ribbon structure in the center of the module category of a finite dimensional Hopf algebra.

A ribbon structure (see \cite{TRV}, and also see \cite{Kassel}) in a rigid braided category is a self-dual twist (or a self-dual balanced structure), which is a natural isomorphism from the identity functor to itself and compatible with the duality and the braiding.
In 1993, Kauffman and Radford got a necessary and sufficient condition for a finite-dimensional Drinfel'd double to be a ribbon Hopf algebra
(see \cite{ld}). As well-known, when a Hopf algebra $H$
is finite-dimensional we have that the category of modules of Drinfel'd double $D(H)$ is actually the Yetter-Drinfeld category $\mathbf{YD}_H^H$. Therefore one is prompted to ask the following question: when $\mathbf{YD}^H_H$ becomes a ribbon category from the point of view of the category theory?
In order to study the question, we discuss that how the ribbon structures appear in the category of entwined modules.

The paper is organized as follows. In Section 2 we recall some notions of entwining structures, pivotal categories and ribbon categories. In section 3, we describe the duality in the category of entwined modules $\mathcal{C}^C_A(\varphi)$ and show $\mathcal{C}^C_A(\varphi)$ is a rigid category. In section 4, we mainly give a necessary and sufficient condition for $\mathcal{C}^C_A(\varphi)$ to be a pivotal category.
In section 5, we give a necessary and sufficient condition for $\mathcal{C}^C_A(\varphi)$ to admit a ribbon structure.
In Section 6, we mainly discuss the Hopf algebras which are induced by $\mathcal{C}^C_A(\varphi)$, and show its (co)representation category is monoidal identified to $\mathcal{C}^C_A(\varphi)$.
Finally, we consider the Yetter-Drinfeld category and the category of generalized Long dimodules for application.

\section{\textbf{Preliminaries}}
\def\theequation{2.\arabic{equation}}
\setcounter{equation} {0} \hskip\parindent

Throughout the paper, we let $k$ be a fixed
field and $char(k)=0$ and $Vec_k$ be the category of finite dimensional $k$-spaces. All the algebras and coalgebras, modules and comodules are supposed to be in $Vec_k$. For the comultiplication
 $\D $ of a $k$-module $C$, we use the Sweedler-Heyneman's notation: $\Delta(c)=c_{1}\o c_{2}$,
for any $c\in C$. $\tau$ means the flip map $\tau(a \o b) = b \o a$.

\vskip 0.4cm
 {\bf 2.1. Entwining structure and entwined modules.}
\vskip 0.4cm

In this section, we will review several definitions related to entwined modules (see \cite{ts} or \cite{DHBP}).

Let $(C,\Delta_C,\varepsilon_C)$ be a coalgebra and $(A,m_A,\eta_A)$ an algebra over $k$. A map $\varphi: C \o A \rightarrow A \o C$, $\varphi(c \o a) = \sum a_\varphi \o c^\varphi$, is called an \emph{entwining map} if the following identities hold\\
$\left\{\begin{array}{l}
(E1)~\sum(ab)_\varphi \o c^\varphi = \sum a_\varphi b_\psi \o c^{\varphi\psi};\\
(E2)~\sum a_\varphi \o (c^\varphi)_1\o (c^\varphi)_2 = \sum a_{\varphi\psi} \o (c_1)^\psi \o (c_2)^\varphi;\\
(E3)~\sum (1_A)_\varphi \o c^\varphi = 1_A \o c;\\
(E4)~\sum a_\varphi \varepsilon_C(c^\varphi) = a\varepsilon_C(c),
\end{array}\right.$
\\
where $a,b \in A$, $c \in C$, $\psi=\varphi$.
Furthermore, $(C, A, \varphi)$ is called \emph{a right-right entwining structure}.

Let $\varphi : C \o A \rightarrow A \o C$ be an entwining map, $M\in \mathcal {C}$, $(M, \varrho_M)$ be a right $A$-module, $(M, \rho^M)$ be a right $C$-comodule. If the diagram
$$\aligned
\xymatrix{
M\o A \ar[d]_{\rho^M \o id_A} \ar[r]^{\varrho_M} & M \ar[r]^{\rho^M} & M\o C  \\
 M\o C \o A \ar"2,3"^{id_M\o\varphi} &  & M \o A\o C \ar[u]_{\varrho_M \o id_C}  }
\endaligned \eqno{(E0)}$$
is commutative, then we call the triple $(M, \varrho_M, \rho^M)$ an \emph{entwined module}.

The morphism between entwined modules is called \emph{entwined module morphism} if it is both $A$-linear and $C$-colinear. The category of entwined modules
 is denoted by $\mathcal {C}^C_A(\varphi)$.

Recall from \cite{BT2} and \cite{DHBP}, for any $g,f \in hom_k(C,A)$, one can define their \emph{entwined convolution product} $g \star f \in hom_k(C,A)$ via
\begin{eqnarray*}
(g \star f)(c):=\sum f(c_2)_\varphi g({c_1}^\varphi),~~~~c \in C.
\end{eqnarray*}
Note that the unit is $\eta_A \ci \varepsilon_C$.

Similarly, $hom_{k}(C\o C,A\o A)$ is also an algebra with the following \emph{entwined convolution product}:
$$\aligned
(g'\star f') :=& m_{A\o A}(A\o A\o g')(A\o \tau_{C,A}\o C)(\varphi\o\varphi)(C\o \tau_{C,A}\o A) (C\o C\o f')\Delta_{C\o C},
 \endaligned$$
where $g', f' \in hom_{k}(C\o C,A\o A)$. Note that the unit is $(\eta_A\o\eta_A) \ci (\varepsilon_C\o \v_C)$.

Recall from [\cite{BT}, Corollary 3.4,] (or see \cite{DHBP}) that $C\o A$ is an object in $\mathcal{C}_A^C(\varphi)$ via
\begin{eqnarray*}
&(c \o a)\c x = c \o ax;\\
&(c \o a)_0 \o (c \o a)_1= \sum c_1 \o a_\varphi \o {c_2}^\varphi,
\end{eqnarray*}
where $a,x \in A$ and $c \in C$.

$A\o C$ is also an object in $\mathcal{C}_A^C(\varphi)$ via
\begin{eqnarray*}
&(a \o c)\c x = \sum ax_\vp \o c^\vp;\\
&(a \o c)_0 \o (a \o c)_1= a \o c_1 \o c_2.
\end{eqnarray*}

Further, for any right $A$-module $M$, $M \o C$ is also an entwined module by
\begin{eqnarray*}
&(m \o c)\c a = \sum m \c a_\varphi \o c^\varphi;\\
&(m \o c)_0 \o (m \o c)_1= m \o c_1 \o c_2.
\end{eqnarray*}
This defines a right adjoint functor for the underlying functor
$U: \mathcal{C}_A^C(\varphi)\rightarrow \mathcal{M}_A$.

\vskip 0.4cm
 {\bf 2.2. Monoidal entwining datum and double quantum group.}
\vskip 0.4cm

Suppose that $C$ and $A$ are two bialgebras over $k$ such that $(C, A, \varphi)$ is an entwining structure. Recall that $(C, A, \varphi)$ is called a \emph{monoidal entwining datum} if the following equations hold\\
$\left\{\begin{array}{l}
(E5)~\sum (a_\varphi)_1 \o (a_\varphi)_2 \o (cd)^\varphi = \sum (a_1)_{\varphi} \o (a_2)_\psi \o c^\varphi d^\psi;\\
(E6)~\sum\varepsilon_A(a_\varphi)(1_C)^\varphi = \varepsilon_A(a)1_C,
\end{array}\right.$
\\
where $a \in A$, $c \in C$.

Recall from [\cite{DHBP}, Theorem 4.1] that
$\mathcal {C}^C_A(\varphi)$ is a monoidal category such that the forgetful functors are strict monoidal if and only if $(C, A, \varphi)$ is a monoidal entwining datum. Further, for any $M,N \in \mathcal {C}^C_A(\varphi)$, the $A$-action and the $C$-coaction on $M \o N$ are given by
\begin{eqnarray*}
&(m \o n)\c a = m\c a_1 \o n\c a_2;\\
&(m \o n)_0 \o (m \o n)_1= m_0 \o n_0 \o m_1n_1,
\end{eqnarray*}
where $m \in M$, $n \in N$, $a \in A$. Moreover, the tensor unit in $\mathcal{C}_A^C(\varphi)$ is $(k,id_k \o \varepsilon_A, id_k \o \eta_C)$.

Recall that
a pair of bialgebras $C$ and $A$ together with a
monoidal entwining map $\vp$ (such that $\mathcal {C}^C_A(\varphi)$ is a
monoidal category) and together with a $k$-linear morphism $R:C\o C\rightarrow A \o A$ is
called a \emph{double quantum group} if the following identities hold for any $a \in A$, $c,d,x,y,z \in C$:\\
$\left\{\begin{array}{l}
(E7)~\sum R(c_1 \o d_1) \o c_2d_2 = {R^{(1)}(c_2 \o d_2)}_\vp \o {R^{(2)}(c_2 \o d_2)}_\psi \o {d_1}^\vp {c_1}^\psi; \\
(E8)~\sum {a_2}_\psi R^{(1)}(c^\vp \o d^\psi) \o {a_1}_\vp R^{(2)}(c^\vp \o d^\psi) = R^{(1)}(c \o d) a_1 \o R^{(2)}(c \o d) a_2; \\
(E9)~\sum {R^{(1)}(x \o yz)}_1 \o {R^{(1)}(x \o yz)}_2 \o R^{(2)}(x \o yz) \\
~~~~~~~~~~~~~~~~~~~= r^{(1)}(x_2 \o y) \o R^{(1)}({x_1}^\vp \o z) \o {r^{(2)}(x_2 \o y)}_\vp R^{(2)}({x_1}^\vp \o z); \\
(E10)~\sum R^{(1)}(xy \o z) \o {R^{(2)}(xy \o z)}_1\o {R^{(2)}(xy \o z)}_2 \\
~~~~~~~~~~~~~~~~~~~= {r^{(1)}(y \o z_2)}_\vp R^{(1)}(x \o {z_1}^\vp) \o R^{(2)}(x \o {z_1}^\vp) \o r^{(2)}(y \o z_2); \\
(E10)~R\in hom_k(C \o C, A\o A)\mbox{~is~invertible~under~the~entwined~convolution}.
\end{array}\right.$
\\

Recall from [\cite{DHBP}, Theorem 5.5] that
$\mathcal {C}^C_A(\varphi)$ is a braided monoidal category if and only if $(C, A, \varphi, R)$ is a double quantum group.
Further, the braiding $\mathbf{C}$ in $\mathcal {C}^C_A(\varphi)$ is given by
\begin{eqnarray}
\mathbf{C}_{M,N}: M \o N\rightarrow N \o M,~~~~m \o n \mapsto (n_0 \o m_0)\c R(m_1 \o n_1),
\end{eqnarray}
where $M, N\in  \mathcal {C}^C_A(\varphi)$.

\vskip 0.4cm
 {\bf 2.3. Pivotal category and ribbon category.}
\vskip 0.4cm

In this section, we will review several definitions and notations related to ribbon structures.

Let $(\mathcal {C}, \o, I)$ and $(\mathcal {C}', \o', I')$ be strict monoidal categories. A functor $G:\mathcal {C}\rightarrow \mathcal {C}'$ is called a \emph{monoidal functor}, if there exists a natural transformation $G_2: G\otimes' G \rightarrow G\otimes$ and a morphism $G_0: I' \rightarrow G(I)$ in $\mathcal {C}'$, such that for any $X, Y, Z \in \mathcal {C}$, the following diagrams commute
$$\aligned
\xymatrix{
G(X) \o' G(Y) \o' G(Z) \ar[d]_{G_2(X,Y)\otimes' id_{G(Z)}} \ar[r]^{~~~id_{G(X)} \otimes' G_2(Y, Z)} & G(X) \o' G(Y\o Z) \ar[d]^{G_2(X,Y\otimes Z)} \\
G(X \o Y) \o' G(Z) \ar[r]_{~~G_2(X\otimes Y, Z)} & G(X \o Y \o Z),}~~~~
\xymatrix{
G(X) \ar[r]^{id_{G(X)}\otimes' G_0~~} \ar[dr]|{id_{G(X)}}\ar[d]_{G_0 \otimes' id_{G(X)}} & G(X)\o' I'\ar[d]^{G_2(X,I)} \\
I' \o' G(X) \ar[r]_{~~~G_2(I,X)} & G(X) . }
 \endaligned$$

A monoidal functor $(G,G_2,G_0)$ is said to be \emph{strong} (respectively \emph{strict}) if $G_2$ and $G_0$
are isomorphisms (respectively identities).

Let $(\mathcal{C}, \o,I)$ and $(\mathcal{C}', \o',I')$ be monoidal categories, $F,G$ be monoidal functors from $\mathcal{C}$ to $\mathcal{C}'$. A natural transformation $\alpha:F \Rightarrow G : \mathcal{C}\rightarrow \mathcal{C}'$ is called \emph{monoidal} if $\a$ satisfies
$$
\alpha_{X \o Y}\circ F_2(X,Y) = G_2(X,Y)\circ (\alpha_X \o' \alpha_Y),~~\mbox{and}~~G_0 = \alpha_IF_0.
$$
Or equivalently (\cite{Etingof}, Definition 1.5.1),
if $\a$ satisfies
$$
\alpha_{X \o Y}\circ F_2(X,Y) = G_2(X,Y)\circ (\alpha_X \o'\alpha_Y), ~~\mbox{and}~~\alpha_I~\mbox{is~an~isomorphism}.
$$

Let $(\mathcal {C}, \o, I)$ be a strict monoidal category. Recall from \cite{Kassel} or \cite{jB} that for an object $V \in \mathcal {C}$, a \emph{left dual} of $V$ is a triple $(V^\ast, ev_V, coev_V)$, where $V^\ast$ is an object, $ev_V : V^\ast \o V\rightarrow I$ and $coev_V : I\rightarrow V \o V^\ast$ are morphisms in $\mathcal {C}$, satisfying
$$ (V \o ev_V)(coev_V \o V) = id_V,\ \ \mbox{and} \  \ (ev_V \o V^\ast)(V^\ast \o coev_V) = id_{V^\ast}.$$

Similarly, a \emph{right dual} of $V$ is a triple $({}^\ast V, \widetilde{ev}_V, \widetilde{coev}_V)$, where ${}^\ast V$ is an object, $\widetilde{ev}_V : V \o {}^\ast V\rightarrow I$ and $\widetilde{coev}_V : I\rightarrow {}^\ast V \o V$ are morphisms in $\mathcal {C}$, satisfying
$$ (\widetilde{ev}_V\o V )(V \o \widetilde{coev}_V ) = id_V,\ \ \mbox{and} \  \ ({}^\ast V \o \widetilde{ev}_V )(\widetilde{coev}_V \o {}^\ast V) = id_{{}^\ast V}.$$

If each object in $\mathcal {C}$ admits a left dual (respectively a right dual, respectively both a left dual and a right dual), then $\mathcal {C}$ is called a \emph{left rigid category} (respectively a \emph{right rigid category}, respectively a \emph{rigid category}).

Assume that $\mathcal {C}$ is a left rigid category. $X, Y \in \mathcal {C}$, for a morphism $g : Y \rightarrow X$ define its \emph{transpose} as follows:
$$g^\ast:=
\xymatrix{
   X^\ast \ar[rr]^-{id_{X^\ast} \o coev_Y} & &   X^\ast\o Y\o Y^\ast \ar[rr]^-{id_{X^\ast} \o g \o id_{Y^\ast}}& &  X^\ast\o X\o Y^\ast \ar[rr]^-{ev_X \o id_{Y^\ast}} &  & Y^\ast
   }. \eqno(TR1)$$
Then it is easy to get the following commutative diagrams
$$\aligned
\xymatrix{
X^\ast \o Y \ar[d]_{id_{X^\ast} \o g} \ar[r]^{g^\ast\o id_Y} & Y^\ast \o Y \ar[d]^{ev_Y} \\
X^\ast \o X \ar[r]_-{ev_X} & I,
}\ \ \ \ \
\xymatrix{
I \ar[d]_{coev_X } \ar[r]^-{coev_Y} & Y \o Y^\ast \ar[d]^{g\o id_Y^\ast} \\
X \o X^\ast \ar[r]_{id_{X} \o g^\ast } & X \o Y^\ast.
}
\endaligned \eqno(TR2)$$
Further, this defines a bijection between $Hom_{\mathcal {C}}(X^\ast, Y^\ast)$ and $Hom_{\mathcal {C}}(Y, X)$.

\begin{lemma}\label{2.1}
Let $\mathcal {C}$ be a left rigid category, $U, V, W$ be objects in $\mathcal {C}$. Then \\
(1). $V^\ast\o U^\ast\cong (U\o V)^\ast$;\\
(2). if $f:V\rightarrow W$ and $g:U\rightarrow V$ are morphisms in $\mathcal {C}$, then we have $(f\circ g)^\ast = g^\ast \circ f^\ast$, and $(1_V)^\ast = 1_{V^\ast}$;\\
(3). $I^\ast = I$.
\end{lemma}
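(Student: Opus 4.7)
The plan is to exploit uniqueness of left duals (up to canonical isomorphism) together with straightforward diagram chases using (TR1) and (TR2). None of the three items is deep; the only thing to watch is that strictness of $\mathcal{C}$ is used to identify things like $V^\ast \otimes I \otimes V$ with $V^\ast \otimes V$.

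For (3), I would simply exhibit $I$ itself as a left dual of $I$: take $ev_I = id_I$ and $coev_I = id_I$ (using strictness, so that $I \otimes I = I$). Both triangle identities reduce to $id_I \circ id_I = id_I$, so $(I, id_I, id_I)$ is a left dual of $I$, and hence $I^\ast = I$.

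For (1), I would construct a left-dual structure on $V^\ast \otimes U^\ast$ for the object $U \otimes V$ explicitly. Set
\[
\widetilde{ev} := (ev_V) \circ (id_{V^\ast} \otimes ev_U \otimes id_V) : V^\ast \otimes U^\ast \otimes U \otimes V \lr I,
\]
\[
\widetilde{coev} := (id_U \otimes coev_V \otimes id_{U^\ast}) \circ coev_U : I \lr U \otimes V \otimes V^\ast \otimes U^\ast.
\]
A routine chase using the triangle identities for $U$ and $V$ separately verifies the two triangle identities for $U \otimes V$. Since left duals are unique up to canonical isomorphism, this yields $V^\ast \otimes U^\ast \cong (U \otimes V)^\ast$.

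For (2), the functoriality $(f \circ g)^\ast = g^\ast \circ f^\ast$ is obtained by pasting the two instances of the left square of (TR2): applied to $g : U \to V$ it gives a commuting square, applied to $f : V \to W$ gives another, and stacking them (together with the fact that $(f \circ g)^\ast$ is characterized by making the corresponding square with arrow $f \circ g$ commute) identifies $(f \circ g)^\ast$ with $g^\ast \circ f^\ast$ via the bijection $Hom(X^\ast, Y^\ast) \cong Hom(Y,X)$ mentioned just after (TR2). For $(1_V)^\ast = 1_{V^\ast}$, unfold the definition:
\[
(1_V)^\ast = (ev_V \otimes id_{V^\ast}) \circ (id_{V^\ast} \otimes 1_V \otimes id_{V^\ast}) \circ (id_{V^\ast} \otimes coev_V),
\]
which is exactly $(ev_V \otimes id_{V^\ast}) \circ (id_{V^\ast} \otimes coev_V) = id_{V^\ast}$ by the second triangle identity for $V$.

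The main obstacle, if one can call it that, is only bookkeeping in part (1): the strict monoidal setting keeps parentheses out of the way, but one still must be careful that the inserted identities in $\widetilde{ev}$ and $\widetilde{coev}$ are placed so that the two triangle identities factor through the triangle identities of $U$ and $V$. Everything else is essentially formal manipulation using (TR1), (TR2), and strictness of $\otimes$.
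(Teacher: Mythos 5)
Your proposal is correct, and it is the standard argument: the paper states Lemma 2.1 without proof (it is the classical fact that the left dual functor is strong monoidal), and your construction of the dual structure $\bigl(V^\ast\o U^\ast,\ ev_V\circ(id_{V^\ast}\o ev_U\o id_V),\ (id_U\o coev_V\o id_{U^\ast})\circ coev_U\bigr)$ together with uniqueness of duals, the characterization of $g^\ast$ via the left square of (TR2), and the triangle identity for $(1_V)^\ast$ is exactly the canonical route. No gaps; the implicit uses of the interchange law and strictness in your ``routine chase'' are legitimate.
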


Thus we have a strong monoidal functor $()^\ast : \mathcal {C}^{op}\rightarrow  \mathcal {C}$, where $\mathcal {C}^{op}$ is the opposite category to $\mathcal {C}$ with opposite monoidal structure. $()^\ast$ is called the \emph{left dual functor}.

Similarly, for a morphism $f : X\rightarrow Y$ in a right rigid category $\mathcal {C}$, define ${}^\ast f: {}^\ast Y\rightarrow {}^\ast X$ by
$$ {}^\ast f = ({}^\ast X \o \widetilde{ev}_Y)({}^\ast X \o f \o {}^\ast Y )(\widetilde{coev}_X\o {}^\ast Y),$$
then we get the \emph{right dual functor} ${}^\ast () : \mathcal {C}^{op}\rightarrow \mathcal {C}$. It is also a strong monoidal functor. Further, if $\mathcal {C}=Vec_k$, then $()^\ast$ and ${}^\ast ()$ are all strict monoidal functors.

Recall from \cite{Etingof} and \cite{Ps} that a \emph{pivotal structure} in a right rigid category $\mathcal {C}$ is an isomorphism $\beta:id\rightarrow {}^{\ast\ast}()$ of monoidal functors, where $id$ means the identity functor and ${}^{\ast\ast}() = {}^{\ast}() \ci {}^{\ast}()$. A \emph{pivotal category} is a right rigid category endowed with a pivotal structure.

Let $(\mathcal{C}, \o, I, a,l,r,\mathbf{C})$ be a braided monoidal category with the braiding $\mathbf{C}$. Recall from \cite{Kassel} (or \cite{Ps}) that a \emph{twist} (or a \emph{balanced structure})
on $\mathcal{C}$ is a family $\theta_V : V \rightarrow V$ of natural isomorphisms
indexed by the objects $V$ of $\mathcal{C}$ satisfying
\begin{eqnarray*}
\theta_{V \o W} = \mathbf{C}_{W,V} \mathbf{C}_{V,W}(\theta_V \o \theta_W).
\end{eqnarray*}

A twist $\theta$ on an autonomous category $\mathcal{C}$ is \emph{self-dual} if $\theta_{V^\ast} = (\theta_V)^\ast$ (or equivalently,
$\theta_{{}^\ast V} = {}^\ast(\theta_V)$).

A \emph{ribbon category} is a braided autonomous category endowed with a self-dual twist.

\section{\textbf{The duality in the category of entwined modules}}
\def\theequation{3.\arabic{equation}}
\setcounter{equation} {0} \hskip\parindent

From now on we assume that $C$ and $A$ are two Hopf algebras with bijective antipodes over $k$, and $\varphi:C\o A \rightarrow A\o C$ is a $k$-linear map such that $(C,A,\varphi)$ is a monoidal entwining datum.

For any $(M, \varrho_M, \rho^M) \in \mathcal {C}^C_A(\varphi)$, set $M^\ast = {}^\ast M = hom_k(M,k)$ as spaces, and define the $A$-action and $C$-coaction on $M^\ast$ and ${}^\ast M$ by
\begin{equation*}
\begin{split}
\varrho_{M^\ast}&:  M^\ast \o A\longrightarrow M^\ast,~~
 (f\cdot a)(m):=f(m\cdot S^{-1}_A(a)),\\
\rho^{M^\ast}&:  M^\ast \longrightarrow M^\ast \o C, ~~
 f_0(m) \o f_1:= f(m_{0}) \o S_C(m_{1}), \\
\varrho_{{}^\ast M}&:  {}^\ast M \o A\longrightarrow {}^\ast M,~~
 (f\cdot a)(m):=f(m\cdot S_A(h)),\\
\rho^{{}^\ast M}&: {}^\ast M  \longrightarrow {}^\ast M \o C, ~~
 f_0(m) \o f_1:= f(m_{0}) \o S^{-1}_C(m_{1}),
\end{split}
\end{equation*}
where $f \in hom_k(M,k)$, $a \in A$, $m \in M$, and define the evaluation map and coevaluation map by
\begin{eqnarray*}
ev_M:   f \otimes  m \longmapsto  f(m); ~~~~
coev_M:  1_k \longmapsto  \sum_i e_i \o e^i; \\
\widetilde{ev}_M:   m \otimes  f \longmapsto  f(m); ~~~~
\widetilde{coev}_M:  1_k \longmapsto  \sum_i e^i \o e_i,
\end{eqnarray*}
where $e_i$ and $e^i$ are dual bases in $M$ and $M^\ast$.
It is easy to check that $M^\ast$ and ${}^\ast M$ are all both $A$-modules and $C$-comodules. Further, $ev,coev,\widetilde{ev},\widetilde{coev}$ are all both $A$-linear and $C$-colinear maps.

\begin{lemma} \label{3.0}
For any $a \in A$, $c \in C$, the following identities hold
\begin{eqnarray}
&S_A^{-1}(a) \o S_C(c) = \sum (S_A^{-1}(a_\varphi))_\psi \o (S_C(c^\psi))^\varphi;\\
&S_A(a) \o S^{-1}_C(c) = \sum (S_A(a_\varphi))_\psi \o (S^{-1}_C(c^\psi))^\varphi.
\end{eqnarray}
\end{lemma}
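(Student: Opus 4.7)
The plan is to verify the two identities by direct computation from the entwining axioms (E1)--(E4) together with the defining antipode relations. The two statements are parallel, differing only by the simultaneous swap $S_A \leftrightarrow S_A^{-1}$ and $S_C \leftrightarrow S_C^{-1}$, and the proof strategy transfers mutatis mutandis: wherever the proof of the first identity invokes $\sum a_1 S_A(a_2) = \varepsilon_A(a) 1_A$ or $\sum c_1 S_C(c_2) = \varepsilon_C(c) 1_C$, the proof of the second substitutes the $S^{-1}$-analog. Accordingly I will concentrate on the first identity.

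My approach is convolution uniqueness. Equip $\mathrm{Hom}_k(C \otimes A, A \otimes C)$ with the convolution $(f \star g)(c \otimes a) := f(c_1 \otimes a_1) \cdot g(c_2 \otimes a_2)$ arising from the tensor product coalgebra structure on $C \otimes A$ and the tensor product algebra structure on $A \otimes C$; its unit is $e(c \otimes a) = \varepsilon_C(c)\varepsilon_A(a) (1_A \otimes 1_C)$. Let $F$ denote the left-hand map $c \otimes a \mapsto S_A^{-1}(a) \otimes S_C(c)$ and $G$ the right-hand map $c \otimes a \mapsto \sum (S_A^{-1}(a_\varphi))_\psi \otimes (S_C(c^\psi))^\varphi$. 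The strategy is to produce an auxiliary element $X$, built from $\varphi$ and the swap $\iota(c \otimes a) = a \otimes c$, such that $F \star X = e = G \star X$, whence $F = G$ by uniqueness of convolution inverses. Expanding the convolution uses (E1) on the $A$-side and (E2) on the $C$-side to realign the $\varphi$-pairings, after which the antipode relations $\sum a_1 S_A(a_2) = \varepsilon_A(a) 1_A$ and $\sum c_1 S_C(c_2) = \varepsilon_C(c) 1_C$ collapse the resulting expression to $e$. Since $F$ carries no entwining pairings, the check for $F$ reduces almost immediately; the substance of the calculation is in showing $G \star X = e$, where the nested pairings inside $G$ must be unwound using the axioms and then telescoped by the antipode.

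The main obstacle is notational bookkeeping. Each invocation of (E1) or (E2) introduces a fresh Greek letter for an independent entwining pairing, so the computation routinely carries two or three simultaneous $\varphi, \psi, \chi, \dots$ pairings whose subscripts and superscripts must be aligned before an axiom applies. Coassociativity of $\Delta_C$ and associativity of $m_A$ are used repeatedly to massage composites of $\varphi$ into the exact shape that (E1) or (E2) requires; an incorrect alignment typically produces an expression that looks close to the target but cannot be collapsed. Conceptually, the lemma is the familiar statement that antipodes commute with compatible structure maps because both are characterised by convolution relations, so the result is not surprising; the difficulty is entirely computational, and lies in carrying out the index tracking without error.
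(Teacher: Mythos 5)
Your plan has a fatal flaw at its foundation: you propose to derive the identities ``from the entwining axioms (E1)--(E4) together with the defining antipode relations,'' but the lemma is simply false at that level of generality. The standing hypothesis of Section 3 is that $(C,A,\varphi)$ is a \emph{monoidal} entwining datum, and the extra axioms (E5)--(E6) are exactly what link $\varphi$ to the bialgebra structures, hence to the antipodes; without them no auxiliary element $X$ with the properties you want can exist. Concretely, take $A=C=kG$ for a finite group $G$ and the Hopf-module entwining $\varphi(c\otimes a)=a_1\otimes c\,a_2$, which satisfies (E1)--(E4). For group elements $\varphi(x\otimes a)=a\otimes xa$, so the $A$-leg of any $\varphi$-pairing is forced, and the crossed pairings on the right-hand side of (3.2) --- note these are cyclically entangled and only make sense as a dual-basis contraction in finite dimensions, a point your write-up never addresses --- resolve uniquely: $a_\varphi=a$, $(S_A(a_\varphi))_\psi=a^{-1}$, $c^\psi=ca^{-1}$, $(S_C^{-1}(c^\psi))^\varphi=ac^{-1}a$. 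Thus the right side of (3.2) is $a^{-1}\otimes ac^{-1}a$ while the left side is $a^{-1}\otimes c^{-1}$; already for $G=\mathbb{Z}_3$, $a=c=t$, these are $t^2\otimes t$ versus $t^2\otimes t^2$. (Consistently, this $\varphi$ violates (E5).) There are also two logical defects in the convolution scheme itself: you never specify $X$, and $F\star X=e=G\star X$ only makes $F$ and $G$ \emph{left} inverses of $X$ --- uniqueness applies to two-sided inverses, so you would additionally need a right inverse of $X$. Moreover, even in the trivial case $\varphi=\tau$ your proposed convolution (tensor coalgebra on $C\otimes A$, tensor algebra on $A\otimes C$) produces factors such as $S_A^{-1}(a_1)a_2$, which do not collapse; the mixture of $S$ and $S^{-1}$ in $F$ forces ${}^{cop}$/${}^{op}$ twists that your setup does not provide.

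The paper's actual proof (deferred to Section 6) can be read as the correct implementation of your convolution-uniqueness intuition, but one level up: using finite-dimensionality it constructs the entwined smash product Hopf algebra ${C^{\ast}}^{op}\otimes A$ and the entwined smash coproduct ${A^{\ast}}^{cop}\otimes C$, whose bialgebra structures are exactly where (E5)--(E6) enter. Identity (3.2) then follows by evaluating the anti-multiplicativity $\widehat{S}\bigl((\varepsilon_C\otimes a)(p\otimes 1_A)\bigr)=\widehat{S}(p\otimes 1_A)\,\widehat{S}(\varepsilon_C\otimes a)$ of the smash product antipode against dual bases, and (3.1) follows dually from the anti-comultiplicativity of the smash coproduct antipode $\overline{S}$; the role of ``uniqueness of convolution inverses'' is played by the uniqueness (and automatic anti-(co)multiplicativity) of the antipode. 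Note also that in the paper the two identities are not related by a mere $S\leftrightarrow S^{-1}$ swap, as you assert, but come from two genuinely dual constructions --- the product side for (3.2) and the coproduct side for (3.1).
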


\begin{proof}
See Section 6.
\end{proof}

\begin{theorem}\label{3.1}
$\mathcal {C}^C_A(\varphi)$ is a rigid category.
\end{theorem}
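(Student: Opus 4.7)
The plan is to show rigidity by taking the candidate duals $M^{\ast}$ and ${}^{\ast}M$ already defined just before the theorem, together with the candidate $ev,coev,\widetilde{ev},\widetilde{coev}$, and verifying three things in sequence: (i) $M^{\ast}$ and ${}^{\ast}M$ are actually objects of $\mathcal{C}^{C}_{A}(\varphi)$, (ii) the four structural maps are morphisms in $\mathcal{C}^{C}_{A}(\varphi)$, and (iii) the two zig-zag (triangle) equations hold. Since the underlying vector spaces are just $\mathrm{hom}_{k}(M,k)$ with the standard evaluation/coevaluation, (iii) will be immediate from the usual duality in $Vec_{k}$ once (i) and (ii) are settled, so the content lives in the first two steps.

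First I would check that the given formulas $\varrho_{M^{\ast}}$ and $\rho^{M^{\ast}}$ define a right $A$-module and a right $C$-comodule structure on $M^{\ast}$; these are routine consequences of $S_{A}^{-1}$ being an anti-algebra map and $S_{C}$ being an anti-coalgebra map. The substantive point is the entwining axiom $(E0)$ on $M^{\ast}$, i.e.\
\[
(f\c a)_{0}\o (f\c a)_{1}=f_{0}\c a_{\varphi}\o f_{1}{}^{\varphi}.
\]
Evaluating the right-hand side at $m\in M$ and unfolding the definitions gives $f\bigl(m_{0}\c (S_{A}^{-1}(a_{\varphi}))_{\psi}\bigr)\o (S_{C}(m_{1}{}^{\psi}))^{\varphi}$, and applying formula (3.1) of Lemma \ref{3.0} with $c=m_{1}$ collapses this to $f(m_{0}\c S_{A}^{-1}(a))\o S_{C}(m_{1})$, which is the left-hand side. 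For ${}^{\ast}M$ the same game runs with (3.2) replacing (3.1). This is the step where Lemma \ref{3.0} is unavoidable, and I expect it to be the main obstacle: the point of that lemma is precisely to convert the antipode-twisted $A$-action into something compatible with the $C$-coaction through $\varphi$.

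Second, I would verify that $ev_{M}, coev_{M}, \widetilde{ev}_{M}, \widetilde{coev}_{M}$ are both $A$-linear and $C$-colinear, with $k$ equipped with the unit entwined-module structure $(id_k\o\varepsilon_{A},id_{k}\o\eta_{C})$ and $M^{\ast}\o M$, $M\o M^{\ast}$ equipped with the tensor structure recalled in Section 2.2. For $ev_{M}$: $A$-linearity reads $f\bigl((m\c a_{2})\c S_{A}^{-1}(a_{1})\bigr)=\varepsilon_{A}(a)f(m)$, which uses the antipode identity $a_{2}S_{A}^{-1}(a_{1})=\varepsilon_{A}(a)1_{A}$; $C$-colinearity uses the dual identity $m_{0(1)}S_{C}(m_{0(2)}{})=m_{(0)}\varepsilon_{C}(m_{1})$ together with coassociativity. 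For $coev_{M}$, using the standard identification $\sum e_{i}\o e^{i}\cdot(-)$ with endomorphisms of $M$, the $A$-linearity again reduces to $S_{A}^{-1}(a_{2})a_{1}=\varepsilon_{A}(a)$, and the $C$-colinearity to the same antipode-coassociativity identity. The maps $\widetilde{ev}_{M},\widetilde{coev}_{M}$ are handled symmetrically with $S_{A},S_{C}$ in place of $S_{A}^{-1},S_{C}^{-1}$.

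Finally, once both sides of the zig-zag relations are morphisms in $\mathcal{C}^{C}_{A}(\varphi)$, verifying them amounts to the statement that $(M^{\ast},ev_{M},coev_{M})$ is a left dual and $({}^{\ast}M,\widetilde{ev}_{M},\widetilde{coev}_{M})$ is a right dual of $M$ in $Vec_{k}$, which is classical. Concluding: every object $M\in\mathcal{C}^{C}_{A}(\varphi)$ admits a left and a right dual, so $\mathcal{C}^{C}_{A}(\varphi)$ is rigid. The real work is concentrated in the application of Lemma \ref{3.0} in step one; everything else is algebraic bookkeeping in two Hopf algebras with bijective antipodes.
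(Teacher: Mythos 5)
Your proposal is correct and follows essentially the same route as the paper: take the antipode-twisted duals $M^{\ast}$ and ${}^{\ast}M$ with the standard $ev$, $coev$, $\widetilde{ev}$, $\widetilde{coev}$, and reduce the only nontrivial point --- the compatibility $(E0)$ for the dual object --- to Eq.~(3.1) (resp.\ Eq.~(3.2)) of Lemma \ref{3.0}, exactly as in the paper's proof of Theorem \ref{3.1}. The only difference is that you spell out the module/comodule axioms, the morphism property of the (co)evaluations, and the zig-zag identities, which the paper dismisses as routine before stating the theorem.
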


\begin{proof}
We only show that $\mathcal {C}^C_A(\varphi)$ admit left duality, i.e.,
$M^\ast$ is an object in $\mathcal {C}^C_A(\varphi)$ for any entwined module $M$. Actually, for any
$\m \in M^\ast$, $m \in M$, $a \in A$, we have
\begin{eqnarray*}
(\m\c a)_0(m) \o (\m\c a)_1&=& \m(m_0 \c S^{-1}_A(a)) \o S_C(m_1)\\
&\stackrel {(3.1)}{=}&\sum \m(m_0 \c (S_A^{-1}(a_\varphi))_\psi) \o (S_C({m_1}^\psi))^\varphi\\
&=&\sum \m_0(m\c S_A^{-1}(a_\varphi)) \o {\m_1}^\varphi=\sum(\m_0 \c a_\varphi)(m) \o (\m_1)^\varphi.
\end{eqnarray*}
Thus $\mathcal {C}^C_A(\varphi)$ is a left rigid category.

Similarly, one can check that $\mathcal {C}^C_A(\varphi)$ is a right rigid category by using Eq.(3.2).
\end{proof}

\section{\textbf{The pivotal structure in the category of entwined modules}}
\def\theequation{4.\arabic{equation}}
\setcounter{equation} {0} \hskip\parindent

Suppose that $F$ is the underlying functor from $\mathcal {C}^C_A(\varphi)$ to $Vec_k$, $Nat(F \ci id, F \ci {}^{\ast\ast}())$ means the collection of natural transformations from the functor $F \ci id:{\mathcal {C}^C_A(\varphi)}\rightarrow Vec_k$ to the functor $F \ci {}^{\ast\ast}()$. Then we have the following property.

\begin{proposition}\label{4.1}
There is a bijective map between $Nat(F \ci id, F \ci {}^{\ast\ast}())$ and $hom_k(C,A)$.
\end{proposition}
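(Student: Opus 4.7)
The plan is to construct mutually inverse maps $\Phi : hom_k(C,A) \to Nat(F\ci id, F\ci {}^{\ast\ast}())$ and $\Psi$ in the reverse direction. Because $Vec_k$ consists of finite-dimensional spaces, I will silently identify ${}^{\ast\ast}M$ with $M$ via the canonical natural isomorphism, so each $\alpha_M$ may be treated as a $k$-linear endomorphism of the underlying vector space of $M$.

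For $\gamma \in hom_k(C,A)$, I set $\alpha^\gamma_M(m) := m_0\cdot\gamma(m_1)$, and check naturality at any entwined morphism $f:M\to N$ in one line, using only that $f$ is $A$-linear and $C$-colinear. Conversely, for a natural transformation $\alpha$, I set $\gamma_\alpha(c) := (id_A\o\v_C)(\alpha_{A\o C}(1_A\o c))$, invoking the entwined-module structure on $A\o C$ recalled in Section 2. The identity $\Psi\ci\Phi = id$ will then reduce to the computation $\alpha^\gamma_{A\o C}(1_A\o c) = \gamma(c_2)_\vp\o {c_1}^\vp$ followed by axiom (E4), which collapses $(id_A\o\v_C)$ back to $\gamma(c)$.

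The substantive step is $\Phi\ci\Psi = id$, i.e., reconstructing $\alpha_M(m) = m_0\cdot\gamma_\alpha(m_1)$ on an arbitrary entwined module $M$. My plan is to funnel $\alpha_M$ through $\alpha_{A\o C}$ using two auxiliary entwined morphisms. The first is the coaction itself, $\rho^M : M \to M\o C$, where $M\o C$ now carries the induced entwined-module structure associated to $M$ viewed as a mere right $A$-module; a short check from (E0) and coassociativity shows $\rho^M$ is $A$-linear and $C$-colinear, and naturality combined with $(id_M\o\v_C)\ci\rho^M = id_M$ yields $\alpha_M(m) = (id_M\o\v_C)(\alpha_{M\o C}(m_0\o m_1))$. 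The second is $\iota_n : A\o C \to M\o C$, $a\o c\mapsto n\cdot a\o c$, defined for each $n\in M$, which is directly verified to be an entwined morphism; naturality at $\iota_{m_0}$ gives $\alpha_{M\o C}(m_0\o m_1) = \iota_{m_0}(\alpha_{A\o C}(1_A\o m_1))$. Chaining the two identities produces $\alpha_M(m) = m_0\cdot\gamma_\alpha(m_1)$ as required.

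The main obstacle I anticipate is spotting the correct pair of auxiliary morphisms $(\rho^M, \iota_n)$ that transports all of $\alpha$ through the single object $A\o C$; the individual verifications are short but must carefully distinguish the ``free'' entwined-module structure on $A\o C$ from the induced one on $M\o C$. Once these are in place, the remainder is routine bookkeeping with Sweedler notation and the counit axiom.
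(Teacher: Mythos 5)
Your proposal is correct and follows essentially the same route as the paper: both directions use the same formulas ($m\mapsto m_0\cdot\gamma(m_1)$ and evaluation of $\alpha_{A\o C}(1_A\o c)$ against $\varepsilon_C$, collapsed by (E4)), and the substantive step reconstructs $\alpha_M$ by funneling through $A\o C$ via exactly the paper's two auxiliary entwined morphisms, namely $\rho^M:M\to M\o C$ and your $\iota_{m_0}$, which is the paper's $\widetilde{m_0}\o id_C$. The only cosmetic difference is that you identify ${}^{\ast\ast}M$ with $M$ up front (legitimate, since the underlying linear map of ${}^{\ast\ast}f$ is the ordinary double transpose, so the canonical evaluation is natural), whereas the paper carries the double duals explicitly via dual bases and records the identification separately as Corollary \ref{5.2}.
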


\begin{proof}
Define a map $P:Nat(F \ci id, F \ci {}^{\ast\ast}())\rightarrow hom_k(C,A)$ by
\begin{eqnarray*}
P(\beta):C\rightarrow A, ~~~~c\mapsto\sum \beta_{A \o C}(1_A \o c)(\varepsilon_C \o e^i)e_i,
\end{eqnarray*}
where $\beta \in Nat(F \ci id, F \ci {}^{\ast\ast}())$, $c \in C$, $e_i$ and $e^i$ are bases of $A$ and $A^\ast$ respectively, dual to each other.
Define $Q:hom_k(C,A)\rightarrow Nat(F \ci id, F \ci {}^{\ast\ast}())$ by
$$
Q(\mathfrak{g})_M: M\rightarrow {}^{\ast\ast} M,~~m\mapsto Q(\mathfrak{g})_M(m),~~\mbox{such~that}~~Q(\mathfrak{g})_M(m)(\mu)=\mu(m_0\c \mathfrak{g}(m_1)),
$$
where $\mathfrak{g} \in hom_k(C,A)$, $M \in \mathcal {C}^C_A(\varphi)$, $\mu \in {}^\ast M$, $m \in M$.

Firstly, for any morphism $f:N\rightarrow M$ in $\mathcal {C}^C_A(\varphi)$, $\nu \in {}^\ast N$, $n \in N$, since
\begin{eqnarray*}
{}^{\ast\ast}f(Q(\mathfrak{g})_M(m))(\nu)&=&Q(\mathfrak{g})_M(m)({}^{\ast}f(\nu))\\
&=&{}^{\ast}f(\nu)(m_0 \c \mathfrak{g}(m_1))=\nu ({f(m)}_0 \c \mathfrak{g}({f(m)}_1)),
\end{eqnarray*}
the diagram
$$\aligned
\xymatrix{
M \ar[d]_{f}\ar[rr]^{Q(\mathfrak{g})_M} && {}^{\ast\ast} M \ar[d]^{{}^{\ast\ast} f} \\
N \ar[rr]^{Q(\mathfrak{g})_N} && {}^{\ast\ast} N
}
\endaligned$$
is commute. Thus $Q$ is well-defined.

Secondly, for $g \in hom_k(C,A)$, $c \in C$, we compute
\begin{eqnarray*}
P(Q(\mathfrak{g}))(c)&=&\sum Q(\mathfrak{g})_{A \o C}(1_A \o c)(\varepsilon_C \o e^i)e_i\\
&=&\sum (\varepsilon_C \o e^i)((1_A \o c)_0\c \mathfrak{g}((1_A \o c)_1))e_i\\%
&=&\sum \varepsilon_C(c_1^\varphi)e^i(\mathfrak{g}(c_2)_\varphi)e_i=\mathfrak{g}(c).
\end{eqnarray*}

Thirdly, for $\sigma \in Nat(F \ci id, F \ci {}^{\ast\ast}())$, $\mu \in {}^\ast M$, $m \in M$, notice that
\begin{eqnarray*}
Q(P(\beta))_M(m)(\m)&=&\m(m_0\c P(\beta)(m_1))\\
&=&\sum \m(m_0 \c e_i) \beta_{A \o C}(1_A \o m_1)(\varepsilon_C \o e^i).
\end{eqnarray*}
For any right $A$-module $U$, $u \in U$, one can define the following $A$-linear map
$$
\widetilde{u}:A\rightarrow U, ~~~~a\mapsto u\c a,
$$
it is easy to check that $\widetilde{u} \o id_C: A \o C\rightarrow U \o C$ is both $A$-linear and $C$-colinear. Actually, for any $a,x \in A$, $c \in C$, we compute
$$\aligned
\xymatrix{
a \o c \o x \ar[d]_{\theta_{A \o C}}\ar[rr]^{\widetilde{u} \o id_C \o id_A} && u\c a \o c \o x \ar[d]^{\theta_{U \o C}} \\
ax_\varphi \o c^\varphi \ar[rr]^{\widetilde{u} \o id_A} && u\c ax_\varphi \o c^\varphi,
}~~~~
\xymatrix{
a \o c \ar[d]_{\widetilde{u} \o id_C}\ar[rr]^{\rho^{A \o C}} && a \o c_1 \o c_2 \ar[d]^{\widetilde{u} \o id_C\o id_C} \\
u \c a \o c \ar[rr]^{\rho^{U \o C}} && u\c a \o c_1 \o c_2.
}
\endaligned$$
Thus we have
\begin{eqnarray*}
&&~~~\sum \m(m_0 \c e_i) \beta_{A \o C}(1_A \o m_1)(\varepsilon_C \o e^i)\\
&&=\sum \beta_{A \o C}(1_A \o m_1)(\varepsilon_C \o (\m \ci \widetilde{m_0})(e_i)e^i)\\
&&=(({}^{\ast\ast} \widetilde{m_0} \o id_{{}^{\ast\ast} C}) \ci \beta_{A \o C})(1_A \o m_1)(\varepsilon_C \o \m)\\
&&=\beta_{M \o C}(m_0 \o m_1)(\varepsilon_C \o \m)\\
&&=(\beta_{M \o C} \ci \rho^M )(m) (\varepsilon_C \o \m).
\end{eqnarray*}
Then from the following commute diagram
$$\aligned
\xymatrix{
{}^\ast C \o {}^\ast M\o M \ar[d]_{id_{{}^\ast C } \o id_{{}^\ast M} \o \rho^M} \ar[rr]^{id_{{}^\ast C } \o id_{{}^\ast M} \o \b_M} & &  {}^\ast C \o {}^\ast M \o {}^{\ast\ast}M \ar[d]|{id_{{}^\ast C } \o id_{{}^\ast M} \o {}^{\ast\ast}\rho^M }\ar[rr]^{{}^\ast(\rho^M) \o id_{{}^{\ast\ast} M}} & & {}^\ast M\o {}^{\ast\ast}M \ar[d]^{\widetilde{ev}_{{}^\ast M}} \\
{}^\ast C \o {}^\ast M\o M \o C \ar[rr]_{id_{{}^\ast C } \o id_{{}^\ast M} \o \beta_{M\o C}} & & {}^\ast C \o {}^\ast M\o {}^{\ast\ast} M \o {}^{\ast\ast} C \ar[rr]_{~~~~~~~~~~\widetilde{ev}_{{}^\ast C \o {}^\ast M}} & & k,
}
\endaligned$$
we have
\begin{eqnarray*}
(\beta_{M \o C} \ci \rho^M )(m) (\varepsilon_C \o \m)&=&\beta_{M}(m)({}^\ast(\rho^M) (\varepsilon_C \o \m) )\\
&=&\beta_M(m)(\m).
\end{eqnarray*}
This completes the proof.
\end{proof}

From now on, assume that $\beta \in Nat(F \ci id, F \ci {}^{\ast\ast}())$ and $\mathfrak{g} \in hom_k(C,A)$ are in correspondence with each other.

\begin{lemma} \label{4.2}
$\beta$ is a monoidal natural transformation if and only if $g$ satisfies
\begin{eqnarray}
\Delta_A(\mathfrak{g}(cd))=\mathfrak{g}(c) \o \mathfrak{g}(d),
\end{eqnarray}
for any $c,d \in C$, and
\begin{eqnarray}
\varepsilon_A(\mathfrak{g}(1_C))=1_k.
\end{eqnarray}
\end{lemma}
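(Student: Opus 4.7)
The plan is to unwind both sides of the defining identities for a monoidal natural transformation directly in terms of $\mathfrak{g}$, using the explicit description $\beta_M(m)(\mu) = \mu(m_0 \cdot \mathfrak{g}(m_1))$ from Proposition \ref{4.1} together with the tensor product and unit formulas of $\mathcal{C}_A^C(\varphi)$ recalled in Section 2.2. Since the forgetful functor $F$ is strict monoidal and the double dual functor ${}^{\ast\ast}()$ is strong monoidal via the canonical isomorphisms from Lemma \ref{2.1}, the monoidal condition on $\beta$ becomes, after applying $F$, an equality of $k$-linear maps of underlying vector spaces.

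For the unit axiom, the tensor unit is $k$ with $1_k \cdot a = \varepsilon_A(a)$ and $\rho^k(1_k) = 1_k \o 1_C$. Evaluating on an arbitrary $\mu \in {}^\ast k$ gives
$$\beta_k(1_k)(\mu) \;=\; \mu\bigl(1_k \cdot \mathfrak{g}(1_C)\bigr) \;=\; \varepsilon_A(\mathfrak{g}(1_C))\, \mu(1_k).$$
The requirement that $\beta_k$ be the canonical isomorphism $k \to {}^{\ast\ast} k$ (equivalently, using the second formulation, that $\beta_k$ be an isomorphism and that the multiplicative axiom hold at $X=Y=I$) is therefore equivalent to $\varepsilon_A(\mathfrak{g}(1_C)) = 1_k$.

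For the multiplicative axiom, I evaluate $\beta_{M \o N}(m \o n)$ on a decomposable $\mu \o \nu \in {}^\ast M \o {}^\ast N$ (transported through the monoidal iso ${}^\ast M \o {}^\ast N \cong {}^\ast(N \o M)$ of Lemma \ref{2.1}). Using $(m \o n)\cdot a = m \cdot a_1 \o n \cdot a_2$ and $\rho^{M \o N}(m \o n) = m_0 \o n_0 \o m_1 n_1$, this yields
$$\mu\bigl(m_0 \cdot \mathfrak{g}(m_1 n_1)_1\bigr)\,\nu\bigl(n_0 \cdot \mathfrak{g}(m_1 n_1)_2\bigr),$$
whereas $(\beta_M \o \beta_N)(m \o n)$ evaluates on $\mu \o \nu$ to $\mu(m_0 \cdot \mathfrak{g}(m_1))\,\nu(n_0 \cdot \mathfrak{g}(n_1))$. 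Sufficiency follows at once from $\Delta_A(\mathfrak{g}(cd)) = \mathfrak{g}(c) \o \mathfrak{g}(d)$. For necessity, I would specialize to the universal modules $M = N = A \o C$ of Section 2.2 and take the elements $1_A \o c$ and $1_A \o d$; here the $C$-coactions give $m_1 = c_2$ and $n_1 = d_2$, so applying $\varepsilon_C$ on the appropriate legs lets $c$ and $d$ range freely over $C$, while separating $A$-tensor components against dual bases of $A$ extracts the full identity $\Delta_A(\mathfrak{g}(cd)) = \mathfrak{g}(c) \o \mathfrak{g}(d)$ in $A \o A$.

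The main obstacle I expect is not conceptual but bookkeeping: keeping track of the order reversal in ${}^\ast(M\o N) \cong {}^\ast N \o {}^\ast M$ and the transition through ${}^{\ast\ast}$, and arguing the separation cleanly in the necessity step so that the coalgebra-like identity on $\mathfrak{g}$ is recovered pointwise in $C \o C$ rather than only after evaluation against specific coactions.
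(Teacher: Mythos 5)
Your proposal is correct and follows essentially the same route as the paper: the unit axiom is checked on the tensor unit $k$ exactly as in the paper's proof, sufficiency of (4.1) is the identical direct computation, and necessity is obtained by specializing to a canonical entwined module and testing against functionals of the form $\delta \otimes \varepsilon_C$ with $\delta$ ranging over $A^\ast$. The only deviation is cosmetic: you test on $A \otimes C$ with elements $1_A \otimes c$, where axiom (E4) strips the entwining from the $A$-leg after applying $\varepsilon_C$, whereas the paper tests on $C \otimes A$ with elements $c \otimes 1_A$, where (E3) trivializes the coaction from the start --- both choices extract $\Delta_A(\mathfrak{g}(cd)) = \mathfrak{g}(c) \otimes \mathfrak{g}(d)$ in the same way.
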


\begin{proof}
Firstly, since $(k,id_k \o \varepsilon_A, id_k \o \eta_C) \in \mathcal{C}_A^C(\varphi)$, it is easy to check that $\sigma_k = id_k$ if and only if the following diagram
$$\aligned
\xymatrix{
k \ar[rrrrd]_{id_k} \ar[rr]^{id_k \o \eta_C} && k \o C \ar[rr]^{id_k \o \g} && k \o A \ar[d]^{id_k \o \varepsilon_A}\\
&&&&k
}
\endaligned$$
is commutative, i.e., the identity (4.2) holds.

Secondly, if the equation $(4.1)$ holds, then for any entwined modules $M, N$, $m \in M$, $n \in N$, $\mu \in {}^\ast M$, $\nu \in {}^\ast N$, we have
\begin{eqnarray*}
\sigma_{M \o N}(\nu \o \mu)(m \o n)&=&(\nu \o \mu)((m_0 \o n_0)\c \mathfrak{g}(m_1n_1))\\
&\stackrel {(4.1)}{=}&(\nu \o \mu)(m_0\c \mathfrak{g}(m_1) \o n_0\c \mathfrak{g}(m_2))\\
&=&(\sigma_{M} \o \sigma_{N})(\nu \o \mu)(m \o n),
\end{eqnarray*}
it follows that $\sigma$ is monoidal.

Conversely, if $\sigma$ is monoidal, we take $M=N=C \o A$. Then for any $c, d \in C$, $\delta,\gamma \in {}^\ast A$, for one thing,
we have
\begin{eqnarray*}
&&~~~\sigma_{(C \o A) \o (C \o A)}(\delta \o \varepsilon_C \o \gamma\o\varepsilon_C)(c\o 1_A \o d \o 1_A)\\
&&=(\delta \o \varepsilon_C \o \gamma\o\varepsilon_C)((c_1 \o 1_A)\c (\mathfrak{g}(c_2d_2))_1 \o (d_1 \o 1_A)\c (\mathfrak{g}(c_2d_2)_2))\\
&&=\varepsilon_C(c_1)\delta((\mathfrak{g}(c_2d_2))_1)\varepsilon_C(c_2)\gamma((\mathfrak{g}(c_2d_2))_2)\\
&&=(\delta \o \gamma)(\Delta_A(\mathfrak{g}(cd))).
\end{eqnarray*}
For another, we have
\begin{eqnarray*}
&&~~~(\sigma_{C \o A} \o \sigma_{C \o A})((\delta \o \varepsilon_C) \o (\gamma \o\varepsilon_C))((c\o 1_A) \o (d \o 1_A))\\
&&=(\delta \o\varepsilon_C)((c_1 \o 1_A)\c \mathfrak{g}(c_2)) (\gamma \o\varepsilon_C)((d_1 \o 1_A)\c \mathfrak{g}(d_2)\\
&&=(\delta \o \gamma)(\mathfrak{g}(c) \o \mathfrak{g}(d)).
\end{eqnarray*}
Thus that Eq.(4.1) holds because of the arbitrariness of $\delta$ and $\gamma$.
\end{proof}

Recall from [\cite{ABAV}, Lemma 3.4], since $\mathcal{C}^C_A(\varphi)$ is a rigid category, we immediately get the following property.

\begin{proposition} \label{4.0}
If $\beta$ is a monoidal natural transformation, then $\mathfrak{g}$ is convolution invertible.
\end{proposition}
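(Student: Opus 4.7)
The plan is to deduce the proposition from a general principle about rigid monoidal categories, which is the content of [\cite{ABAV}, Lemma 3.4]: any monoidal natural transformation between strong monoidal endofunctors of a rigid category is automatically a natural isomorphism. One proves this by noting that, for each object $M$, the component $\beta_M : M \to {}^{\ast\ast}M$ is forced by monoidality to intertwine the canonical evaluation/coevaluation morphisms associated to $M$ and to ${}^{\ast\ast}M$; dualising $\beta_M$ against ${}^\ast M$ then produces a pointwise two-sided inverse built from $ev$, $coev$, $\widetilde{ev}$, $\widetilde{coev}$. By Theorem \ref{3.1}, $\mathcal {C}^C_A(\varphi)$ is rigid, and both $id$ and ${}^{\ast\ast}()$ are strong monoidal endofunctors on it, so this principle applies and yields that $\beta$ is a natural isomorphism.

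The next task is to transport invertibility of $\beta$ back through the bijection of Proposition \ref{4.1} into convolution invertibility of $\mathfrak{g}$. I would define a candidate inverse $\mathfrak{g}^{-1} \in hom_k(C,A)$ either (a) by a formula of the form
$$\mathfrak{g}^{-1}(c) \;:=\; \sum \beta^{-1}_{A \o C}(1_A \o c)(\varepsilon_C \o e^i)\, e_i$$
obtained by mimicking the map $P$ of Proposition \ref{4.1} on the pointwise inverse $\beta^{-1}$, after identifying $A\o C$ with its double dual in $\mathcal {C}^C_A(\varphi)$; or (b) directly in terms of $\mathfrak{g}$ and the antipodes of $A$ and $C$, twisted by $\varphi$ in a manner reminiscent of Lemma \ref{3.0}. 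In either case the verification reduces to checking the two identities
$$\sum \mathfrak{g}^{-1}(c_2)_\varphi\, \mathfrak{g}(c_1^{\,\varphi}) \;=\; \varepsilon_C(c)\,1_A \;=\; \sum \mathfrak{g}(c_2)_\varphi\, \mathfrak{g}^{-1}(c_1^{\,\varphi})$$
for every $c \in C$.

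These identities I would extract by evaluating $\beta_M \circ \beta^{-1}_M = id = \beta^{-1}_M \circ \beta_M$ on the universal entwined module $M = A \o C$, exactly as in the computation $P \circ Q = id$ performed in the proof of Proposition \ref{4.1}. The main obstacle will be showing that the categorical composition of the natural transformations $\beta$ and $\beta^{-1}$ on a \emph{tensor product} of entwined modules translates precisely into the \emph{entwined} convolution product on $hom_k(C,A)$. Since the convolution $\star$ is twisted by $\varphi$, one must carefully push $\varphi$ through the interaction between the $A$-action and the $C$-coaction on $M \o N$ (as given in Section 2.2); once this bookkeeping is done, the convolution unit $\eta_A \circ \varepsilon_C$ is seen to correspond to the identity natural transformation, and the convolution inverses fall out from $\beta \beta^{-1} = id$.
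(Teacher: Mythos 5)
Your proposal is correct and takes essentially the same route as the paper: the paper likewise invokes [\cite{ABAV}, Lemma 3.4] to invert $\beta$, then transports the inverse back to $hom_k(C,A)$ via an analogous correspondence $P', Q'$ between $Nat(F \ci {}^{\ast\ast}(), F \ci id)$ and $hom_k(C,A)$ --- this is your option (a), with input and output roles adjusted since $\beta^{-1}_{A \o C}$ maps ${}^{\ast\ast}(A \o C)$ to $A \o C$ --- and declares $P'(\beta^{-1})$ to be the entwined convolution inverse of $\mathfrak{g}$. The verification you sketch, namely that composing the natural transformations corresponds to the entwined convolution $\star$ with unit $\eta_A \ci \varepsilon_C$ (where the $\varphi$-twist enters through the compatibility $(E0)$ on a single entwined module such as $A \o C$, rather than through the tensor-product structure of Section 2.2), is precisely the step the paper compresses into ``one can easily show.''
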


\begin{proof}
Define $P':Nat(F \ci {}^{\ast\ast}(), F \ci id)\rightarrow hom_k(C,A)$ by
\begin{eqnarray*}
P'(\beta'):C\rightarrow A, ~~~~c\mapsto \sum (id_A \o \varepsilon_C) (\b'_{A \o C}(1_{{}^{\ast\ast} A} \o f^i ) f_i(c),
\end{eqnarray*}
where $\beta' \in Nat(F \ci id, F \ci {}^{\ast\ast}())$, $1_{{}^{\ast\ast} A}$ means the unit in ${}^{\ast\ast} A$, $c \in C$, $f_i$ and $f^i$ are bases of ${}^{\ast} C$ and ${}^{\ast\ast} C$ respectively, dual to each other.

Define $Q':hom_k(C,A)\rightarrow Nat(F \ci {}^{\ast\ast}(), F\ci id)$ by
$$
Q'(h)_M: {}^{\ast\ast} M\rightarrow M,~~\varpi\mapsto \sum \varpi(\epsilon^i){\epsilon_i}_0 \c h({\epsilon_i}_1),
$$
where $h \in hom_k(C,A)$, $M \in \mathcal {C}^C_A(\varphi)$, $\varpi \in {}^{\ast\ast} M$, $\epsilon_i$ and $\epsilon^i$ are bases of $M$ and $M^\ast$ respectively, dual to each other.

Obviously $Q'$ is well-defined, and is the inverse of $P'$.

If $\beta$ is a monoidal natural transformation, then $\beta$ is invertible because [\cite{ABAV}, Lemma 3.4]. Suppose that $\b'$ is the inverse of $\b$. One can easily show that $P'(\b')=\mathfrak{g}'$ is the entwined convolution inverse of $P(\b)=\mathfrak{g}$.
\end{proof}

\begin{lemma}\label{4.4}
$\beta$ is $A$-linear if and only if for any $a \in A$, $c \in C$, $\mathfrak{g}$ satisfies
\begin{eqnarray}
\mathfrak{g}(c)S^2_A(a) = \sum a_\varphi \mathfrak{g}(c^\varphi).
\end{eqnarray}
\end{lemma}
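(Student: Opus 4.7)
The plan is to unpack the $A$-linearity condition $\beta_M(m\cdot a)=\beta_M(m)\cdot a$ using the explicit formulas for $\beta$ on $M$, for the module structure on entwined modules, and for the induced action on the double dual ${}^{\ast\ast}M$.

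First, I would compute the left hand side. For $m\in M$ and $a\in A$, the entwined module axiom $(E0)$ gives $(m\cdot a)_0\otimes (m\cdot a)_1=\sum m_0\cdot a_\varphi\otimes m_1{}^\varphi$, so
$$\beta_M(m\cdot a)(\mu)=\mu\bigl(m_0\cdot a_\varphi\cdot\mathfrak{g}(m_1{}^\varphi)\bigr).$$
For the right hand side, I need the $A$-action on ${}^{\ast\ast}M={}^\ast({}^\ast M)$: by iterating the formula $(f\cdot a)(x)=f(x\cdot S_A(a))$ defining the action on ${}^\ast M$, one obtains $(\varpi\cdot a)(\mu)=\varpi(\mu')$ where $\mu'(x)=\mu(x\cdot S_A^2(a))$. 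Hence
$$(\beta_M(m)\cdot a)(\mu)=\mu\bigl(m_0\cdot\mathfrak{g}(m_1)\cdot S_A^2(a)\bigr).$$

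The $(\Leftarrow)$ direction then follows by substituting (4.3), applied to $c=m_1$, inside the argument of $\mu$: the two expressions coincide and $A$-linearity of $\beta_M$ follows for arbitrary $M$.

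For the $(\Rightarrow)$ direction, I would specialise $M=C\otimes A$, which lies in $\mathcal{C}^C_A(\varphi)$, and take $m=c\otimes 1_A$. Using $(E3)$, $(c\otimes 1_A)_0\otimes(c\otimes 1_A)_1=c_1\otimes 1_A\otimes c_2$, while $(c\otimes 1_A)\cdot a=c\otimes a$ has coaction $c_1\otimes a_\varphi\otimes c_2{}^\varphi$. The equation $\beta_M(m\cdot a)(\mu)=(\beta_M(m)\cdot a)(\mu)$ for all $\mu\in{}^\ast(C\otimes A)$ becomes, by non-degeneracy,
$$c_1\otimes a_\varphi\cdot\mathfrak{g}(c_2{}^\varphi)=c_1\otimes \mathfrak{g}(c_2)\cdot S_A^2(a)\qquad\text{in }C\otimes A.$$
Applying $\varepsilon_C\otimes\mathrm{id}_A$ to both sides collapses the $C$-factor and yields exactly identity (4.3).

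The only delicate step is getting the double-dual $A$-action right: the $S_A$ that appears in the formula for ${}^\ast M$ gets iterated to $S_A^2$ (and not, say, $S_A^{-1}\circ S_A$) on ${}^{\ast\ast}M$, which is why $S_A^2(a)$ and not $a$ appears in (4.3). Once that is properly tracked, everything else is a routine bookkeeping of the entwined coaction axioms.
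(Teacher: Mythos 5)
Your proposal is correct and follows essentially the same route as the paper: the backward direction is the same direct computation using the double right-dual action $(\varpi\cdot a)(\mu)=\varpi\bigl(\mu(\,\cdot\,S_A^2(a))\bigr)$, and the forward direction specializes to the entwined module $C\otimes A$ at $m=c\otimes 1_A$, exactly as in the paper. The only cosmetic difference is that you invoke non-degeneracy over all $\mu\in{}^\ast(C\otimes A)$ and then apply $\varepsilon_C\otimes\mathrm{id}_A$, whereas the paper tests directly against the single functional $\gamma\otimes\varepsilon_C$ -- an equivalent bookkeeping.
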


\begin{proof}
$\Rightarrow$: Suppose that $\gamma \in {}^\ast A$, $c \in C$, $a \in A$. For one thing, we compute
\begin{eqnarray*}
(\b_{C \o A}(c \o 1_A) \c a)(\g \o \varepsilon_C)&=&\b_{C \o A}(c \o 1_A)((\g \o \varepsilon_C) \c S_A(a))\\
&=&(\g \o \varepsilon_C)((c \o  1_A)_0  \c \mathfrak{g}((c \o  1_A)_1) \c S^2_A(a))\\
&=&(\g \o \varepsilon_C)(c_1 \o \mathfrak{g}(c_2)S^2_A(a))=\g(\mathfrak{g}(c)S^2_A(a) ).
\end{eqnarray*}

For another, we have
\begin{eqnarray*}
(\beta_{C \o A}((c \o 1_A) \c a))(\g \o \varepsilon_C)&=&(\g \o \varepsilon_C) ((c \o a)_0 \c \mathfrak{g}((c \o a)_1))\\
&=&\sum (\g \o \varepsilon_C)(c_1 \o a_\varphi \mathfrak{g}({c_2}^\varphi))\\
&=&\sum \g(a_\varphi \mathfrak{g}(c^\varphi)).
\end{eqnarray*}

Since $\beta$ is $A$-linear, we have $\g(\mathfrak{g}(c)S^2_A(a))=\g(a_\varphi \mathfrak{g}(c^\varphi))$ which implies Eq.(4.3) holds.

$\Leftarrow$: For any $M \in \mathcal{C}_A^C(\varphi)$, $m \in M$, $\mu \in {}^\ast M$, $a \in A$, we have
\begin{eqnarray*}
\beta_{M}(m\c a)(\mu)&=&\sum \mu(m_0\c a_\varphi \mathfrak{g}({m_1}^\varphi))\\
&\stackrel{(4.3)}{=}&\mu(m_0\c \mathfrak{g}(m_1)S^2_A(a))\\
&=&\beta_{M}(m)(\m \c S_A(a)) = (\beta_{M}(m) \c a)(\m),
\end{eqnarray*}
it follows that $\beta$ is $A$-linear.
\end{proof}

\begin{lemma} \label{4.5}
$\beta$ is $C$-colinear if and only if $g$ satisfies
\begin{eqnarray}
\mathfrak{g}(c_1) \o c_2 = \sum {\mathfrak{g}(c_2)}_\varphi \o S_C^{-2}({c_1}^\varphi),
\end{eqnarray}
for any $c \in C$.
\end{lemma}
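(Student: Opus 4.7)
The plan is to translate the $C$-colinearity of $\beta_M : M \to {}^{\ast\ast} M$ into a concrete identity inside $M \o C$, and then exploit the fact that $C \o A$ is (essentially) a free entwined module over $C$ to extract and, in the other direction, reinsert Eq.~(4.4).

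First I would compute the $C$-coaction on ${}^{\ast\ast} M = {}^{\ast}({}^{\ast} M)$ by iterating the right-dual rule from Section~3: for $\varpi \in {}^{\ast\ast} M$ and $f \in {}^\ast M$,
\[
\varpi_0(f) \o \varpi_1 = \varpi(f_0) \o S_C^{-1}(f_1).
\]
Setting $\varpi = \beta_M(m)$, unfolding the definition of $\beta_M$ together with the coaction on ${}^\ast M$ evaluated at $n := m_0 \cdot \mathfrak{g}(m_1)$, and then applying the entwined-module axiom $(E0)$ with coassociativity of $\Delta_C$, one obtains (after pairing against any $f \in {}^\ast M$)
\[
\beta_M(m)_0(f) \o \beta_M(m)_1 = f\bigl(m_0 \cdot \mathfrak{g}(m_{12})_\varphi\bigr) \o S_C^{-2}\bigl(m_{11}{}^\varphi\bigr).
\]
On the other hand, $(\beta_M \o \mathrm{id}_C)(\rho^M(m)) = \beta_M(m_0) \o m_1$ pairs against the same $f$ to give $f(m_0 \cdot \mathfrak{g}(m_{11})) \o m_{12}$. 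Since $f$ is arbitrary, $C$-colinearity of $\beta_M$ is equivalent to
\[
m_0 \cdot \mathfrak{g}(m_{12})_\varphi \o S_C^{-2}(m_{11}{}^\varphi) = m_0 \cdot \mathfrak{g}(m_{11}) \o m_{12} \qquad (\star)
\]
in $M \o C$, for every $M \in \mathcal{C}_A^C(\varphi)$ and every $m \in M$.

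For the ``only if'' direction, I would specialize $(\star)$ to $M = C \o A$ (with the entwined structure recalled in Section~2.1) and $m = c \o 1_A$. Since $\rho^M(c \o 1_A) = c_1 \o 1_A \o c_2$ and the $A$-action on $C \o A$ only touches the second tensorand, $(\star)$ collapses to
\[
c_1 \o \mathfrak{g}(c_3)_\varphi \o S_C^{-2}(c_2{}^\varphi) = c_1 \o \mathfrak{g}(c_2) \o c_3,
\]
and applying $\varepsilon_C$ to the leading factor recovers Eq.~(4.4) exactly. Conversely, assuming Eq.~(4.4), I would substitute $c = m_1$ (so that $c_1 \o c_2 = m_{11} \o m_{12}$) to obtain an identity in $A \o C$, tensor on the left with $m_0 \in M$, and apply $\varrho_M \o \mathrm{id}_C : M \o A \o C \to M \o C$ to recover $(\star)$ for arbitrary $M$.

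The only delicate point is the appearance of $S_C^{-2}$ (rather than $S_C^{-1}$) on the right-hand side of Eq.~(4.4): it is produced by iterating the right-dual coaction twice, with one $S_C^{-1}$ inserted at each level of dualization. Beyond that, the argument is a routine Sweedler-index calculation using the entwining axioms and coassociativity, closely parallel to the proof of Lemma~\ref{4.4}.
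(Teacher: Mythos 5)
Your proof is correct and takes essentially the approach the paper intends: its ``proof'' of Lemma \ref{4.5} is only the remark that it follows by calculations similar to Lemma \ref{4.4}, i.e.\ testing on the entwined module $C \o A$ at $c \o 1_A$ for necessity and a direct computation on an arbitrary $M$ for sufficiency, which is exactly what you carry out. Your intermediate identity $(\star)$ in $M \o C$, including the correct derivation of $S_C^{-2}$ from the twice-iterated right-dual coaction, simply makes explicit the details the paper leaves implicit.
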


\begin{proof}
It can be proved by similar calculations in Lemma \ref{4.4}.
\end{proof}

\begin{definition}
Assume that $C, A$ are two Hopf algebras with bijective antipodes over a field $k$, and $(C, A, \varphi)$ is a monoidal entwining datum. If there exists a $k$-linear map $\mathfrak{g}:C\rightarrow A$, such that Eqs.(4.1)-(4.4) are satisfied. Then $\mathfrak{g}$ is called an \emph{entwined pivotal morphism} over $(C,A,\vp)$. Further, $(C,A,\varphi,\mathfrak{g})$ is called a \emph{pivotal entwined datum}.
\end{definition}

Combining Proposition \ref{4.1} - Lemma \ref{4.5}, we can get the following theorem.

\begin{theorem}\label{4.6}
Assume that $C,A$ are two Hopf algebras with bijective antipodes over $k$, $\varphi:C\o A \rightarrow A\o C$ is a $k$-linear map such that $(C, A, \varphi)$ is a monoidal entwining datum. Then $\mathcal {C}^C_A(\varphi)$ is a pivotal category with the pivotal structure $\beta$ if and only if there is an entwined pivotal morphism $\mathfrak{g}\in Hom_k(C,A)$. Moreover, $\beta$ is defined by
\begin{eqnarray*}
\beta_M:M\rightarrow {}^{\ast\ast} M,~~\b_M(m)(\m)=\m(m_0 \c \mathfrak{g}(m_1)),~~\mbox{where~}\m \in {}^\ast M,~~m \in M
\end{eqnarray*}
for any $(M,\theta_M,\rho^M) \in \mathcal {C}^C_A(\varphi)$.
\end{theorem}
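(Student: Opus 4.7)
The plan is to assemble the theorem from the dictionary established in Proposition~\ref{4.1} and the characterisations in Lemmas~\ref{4.2}, \ref{4.4}, \ref{4.5}, together with Proposition~\ref{4.0} for invertibility. A pivotal structure on $\mathcal{C}^C_A(\varphi)$ is, by definition, a monoidal natural isomorphism $\beta : id \Rightarrow {}^{\ast\ast}()$; so I need to show that such a $\beta$ exists exactly when some $\mathfrak{g}\in\mathrm{hom}_k(C,A)$ satisfies all of Eqs.~(4.1)--(4.4), and that the two correspond via the formula in the statement.

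First I would observe that any natural transformation $\beta : id \Rightarrow {}^{\ast\ast}()$ of functors $\mathcal{C}^C_A(\varphi) \rightarrow \mathcal{C}^C_A(\varphi)$ determines, by composition with the underlying functor $F$, a natural transformation $F\beta : F\circ id \Rightarrow F\circ {}^{\ast\ast}()$. Conversely, given a natural transformation $\tilde\beta : F\circ id \Rightarrow F\circ {}^{\ast\ast}()$, each component $\tilde\beta_M$ is automatically a well-defined $k$-linear map $M \to {}^{\ast\ast}M$; it lifts to a morphism in $\mathcal{C}^C_A(\varphi)$ precisely when every $\tilde\beta_M$ is $A$-linear and $C$-colinear. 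Thus Proposition~\ref{4.1} already identifies the underlying datum with an element $\mathfrak{g}\in\mathrm{hom}_k(C,A)$ through the formulas $P$ and $Q$, and it remains only to translate each of the four requirements (morphism in the category, monoidality, invertibility) into conditions on $\mathfrak{g}$.

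Now I would run the equivalences step by step. The lift $\beta_M = Q(\mathfrak{g})_M$ is a morphism in $\mathcal{C}^C_A(\varphi)$ for every $M$ iff each $\beta_M$ is $A$-linear and $C$-colinear; by Lemma~\ref{4.4} the first demand is equivalent to (4.3), and by Lemma~\ref{4.5} the second is equivalent to (4.4). Next, $\beta$ is monoidal iff, by Lemma~\ref{4.2}, $\mathfrak{g}$ satisfies (4.1) and (4.2). Finally, Proposition~\ref{4.0} guarantees that as soon as $\beta$ is monoidal the map $\mathfrak{g}$ is entwined-convolution invertible, and the inverse $\mathfrak{g}'$ yields via $Q$ a natural transformation $\beta' = Q(\mathfrak{g}')$ which, by naturality of the correspondence $P\leftrightarrow Q$, is two-sided inverse to $\beta$; hence each $\beta_M$ is an isomorphism in $\mathcal{C}^C_A(\varphi)$. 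Conversely, a pivotal structure is a natural isomorphism so the corresponding $\mathfrak{g}$ is automatically convolution invertible.

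Putting the pieces together gives both directions: if $(C,A,\varphi,\mathfrak{g})$ is a pivotal entwined datum then $\beta = Q(\mathfrak{g})$ given by $\beta_M(m)(\mu) = \mu(m_0\cdot\mathfrak{g}(m_1))$ is a pivotal structure on $\mathcal{C}^C_A(\varphi)$, and conversely any pivotal structure $\beta$ produces an entwined pivotal morphism $\mathfrak{g} = P(\beta)$. The only non-routine point is checking that monoidality of $\beta$ really does force $\beta$ to be a natural isomorphism rather than merely a natural transformation of monoidal functors; this is exactly the content of Proposition~\ref{4.0} (based on [\cite{ABAV}, Lemma 3.4]), so all genuine work has already been done in the preceding results and the proof here is a clean bookkeeping of them.
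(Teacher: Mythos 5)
Your proposal is correct and follows exactly the route the paper takes: its proof of Theorem \ref{4.6} is literally the assembly ``Combining Proposition \ref{4.1} -- Lemma \ref{4.5}'' that you carry out, with Proposition \ref{4.1} supplying the correspondence $\beta \leftrightarrow \mathfrak{g}$, Lemmas \ref{4.4} and \ref{4.5} handling $A$-linearity and $C$-colinearity, Lemma \ref{4.2} handling monoidality, and Proposition \ref{4.0} handling invertibility. Your explicit treatment of the lift from $Nat(F \ci id, F \ci {}^{\ast\ast}())$ to natural transformations in $\mathcal{C}^C_A(\varphi)$ merely spells out what the paper leaves implicit, so there is no substantive difference.
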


\begin{example}
If $C=k$, $\varphi=id_A$, then the entwined pivotal morphism is an element $\mathfrak{g} \in A$ satisfies\\
$\left\{\begin{array}{l}
(1)~\Delta_A(\mathfrak{g}) = \mathfrak{g} \o \mathfrak{g};\\
(2)~\varepsilon_A(\mathfrak{g})=1_k;\\
(3)~\mathfrak{g}S_A(a)=S^{-1}_A(a)\mathfrak{g},
\end{array}\right.$
\\
for any $a \in A$, which implies $\mathfrak{g}$ is a \emph{pivot} (or a \emph{sovereign element}) in $A$ (see \cite{AAIT}, or \cite{jB}), and $A$ is a pivotal Hopf algebra.
\end{example}

\begin{example}
If $A=k$, $\varphi=id_C$, then the entwined pivotal morphism is a $k$-linear character $\mathfrak{g} \in C^\ast$, satisfies\\
$\left\{\begin{array}{l}
(1)~\mathfrak{g}(cd) = \mathfrak{g}(c)\mathfrak{g}(d);\\
(2)~\mathfrak{g}(1_C)=1_k;\\
(3)~\mathfrak{g}(c_1)S_C(c_2)=S^{-1}_C(c_1)\mathfrak{g}(c_2),
\end{array}\right.$
\\
for any $c,d \in C$, which implies $\mathfrak{g}$ is a \emph{copivot} (or a \emph{sovereign character}) on $C$ (see \cite{AAIT}, or \cite{jB}), and $C$ is a copivotal Hopf algebra.
\end{example}

\begin{example}
If $(C,A,\varphi,\mathfrak{g})$ is a pivotal entwined datum, and the following identity hold
$$
\sum a_\varphi \o (1_C)^\varphi = a \o 1_C, ~~~~\mbox{for~any~}a \in A,
$$
then $(A,\mathfrak{g}(1_C))$ is a pivotal Hopf algebra.
\end{example}

\begin{example}
If $(C,A,\varphi,\mathfrak{g})$ is a pivotal entwined datum, and the following identity hold
$$
\sum \varepsilon_A(a_\varphi) c^\varphi = \varepsilon_A(a)c,~~\mbox{for~any~}a \in A,~c\in C,
$$
then $(C, \varepsilon_A\ci \mathfrak{g})$ is a copivotal Hopf algebra.
\end{example}

\begin{example}\label{sw}
Let $k$ be a field and $H_4$ be the Sweedler's 4-dimensional Hopf algebra $H_4=k\{1_H,e,x,y| e^2=1_H,x^2=0, y=ex=-xe\}$ with the following structure
\begin{eqnarray*}
&\Delta(e)=e \o e,~\Delta(x)=x\o 1_H + e \o x,~\Delta(y)= y \o e + 1_H \o y,\\
&\varepsilon(e)=1,~\varepsilon(x)=\varepsilon(y)=0,~S(e)=e,~S(x)=-y,~S(y)=x.
\end{eqnarray*}
Recall from [\cite{jB}, Example 2.8] that $H_4$ is both a pivotal Hopf algebra and a copivotal Hopf algebra.
Moreover, the pivot in $H_4$ is $e$, and the copivot on $H_4$ is $I-E$, where $I,E$ are dual bases of $1_{H_4}$ and $e$.
\end{example}

\section{\textbf{The ribbon structure in the category of entwined modules}}
\def\theequation{5.\arabic{equation}}
\setcounter{equation} {0} \hskip\parindent

Now suppose that $(C,A,\vp,R)$ is a double quantum group over $k$, thus $\mathcal {C}^C_A(\varphi)$ is a braided category with the braiding which is defined by Eq.(2.1).
We also assume that $Nat(F \ci id, F \ci id)$ means the collection of natural transformations from the functor $F \ci id:{\mathcal {C}^C_A(\varphi)}\rightarrow Vec_k$ to itself. Then we have the following property.

\begin{proposition}\label{5.1}
There is a bijective map between the algebra $Nat(F \ci id, F \ci id)$ and $hom_k(C,A)$.
\end{proposition}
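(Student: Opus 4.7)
The plan is to mimic the proof of Proposition \ref{4.1} almost verbatim, with the identity functor in place of the double right dual functor. Define
$$P:Nat(F\ci id,F\ci id)\longrightarrow hom_k(C,A),\qquad P(\beta)(c)=(id_A\o\varepsilon_C)(\beta_{A\o C}(1_A\o c)),$$
and
$$Q:hom_k(C,A)\longrightarrow Nat(F\ci id,F\ci id),\qquad Q(\mathfrak{g})_M(m)=m_0\c\mathfrak{g}(m_1),$$
for every $M\in\mathcal{C}^C_A(\varphi)$ and $m\in M$. I plan to show these maps are mutually inverse. The naturality of $Q(\mathfrak{g})$ is automatic, because a morphism $f:N\to M$ in $\mathcal{C}^C_A(\varphi)$ is both $A$-linear and $C$-colinear, so $f(n_0\c\mathfrak{g}(n_1))=f(n)_0\c\mathfrak{g}(f(n)_1)$.

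For $P\ci Q=id$, I unpack the entwined module structure on $A\o C$ recalled in Section 2: the coaction on $1_A\o c$ is $1_A\o c_1\o c_2$, so
$$Q(\mathfrak{g})_{A\o C}(1_A\o c)=(1_A\o c_1)\c\mathfrak{g}(c_2)=\sum\mathfrak{g}(c_2)_\varphi\o {c_1}^\varphi.$$
Applying $id_A\o\varepsilon_C$ and using axiom (E4) in the form $\sum a_\varphi\varepsilon_C(c^\varphi)=a\varepsilon_C(c)$, the right hand side collapses to $\sum\mathfrak{g}(c_2)\varepsilon_C(c_1)=\mathfrak{g}(c)$, so $P(Q(\mathfrak{g}))=\mathfrak{g}$.

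The essential step is $Q\ci P=id$. Fix $\beta\in Nat(F\ci id,F\ci id)$, $M\in\mathcal{C}^C_A(\varphi)$, and $m\in M$, and write $\rho^M(m)=\sum_i u_i\o c_i$ for some fixed $u_i\in M$, $c_i\in C$. As observed in the proof of Proposition \ref{4.1}, for each $u_i$ the map $\widetilde{u_i}\o id_C:A\o C\to M\o C$, $a\o c\mapsto u_i\c a\o c$, is a morphism in $\mathcal{C}^C_A(\varphi)$. Naturality of $\beta$ with respect to each $\widetilde{u_i}\o id_C$ gives
$$\sum_i(\widetilde{u_i}\o id_C)(\beta_{A\o C}(1_A\o c_i))=\beta_{M\o C}\Bigl(\sum_i u_i\o c_i\Bigr)=\beta_{M\o C}(\rho^M(m)),$$
while naturality of $\beta$ with respect to the entwined module morphism $\rho^M:M\to M\o C$ gives $\beta_{M\o C}(\rho^M(m))=\rho^M(\beta_M(m))$. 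Applying $id_M\o\varepsilon_C$ to both sides and using $(id_M\o\varepsilon_C)\ci\rho^M=id_M$ on the right together with the definition of $P(\beta)$ on the left yields
$$\beta_M(m)=\sum_i u_i\c P(\beta)(c_i)=m_0\c P(\beta)(m_1)=Q(P(\beta))_M(m),$$
which completes the argument.

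The main obstacle is the last step: the careful interplay between the Sweedler-indexed family of morphisms $\widetilde{u_i}\o id_C:A\o C\to M\o C$ and the single morphism $\rho^M:M\to M\o C$. The object $A\o C$ plays the role of a universal carrier on which $\beta$ is determined by its value on the element $1_A\o c$, and $id_A\o\varepsilon_C$ extracts exactly the $A$-valued datum $P(\beta)(c)$; all other data are then reconstructed via naturality. Once both inverse identities are established, one would typically finish by remarking that under this bijection the composition of natural transformations corresponds to the entwined convolution product on $hom_k(C,A)$, so the bijection upgrades to an algebra isomorphism.
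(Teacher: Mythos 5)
Your proof is correct and takes essentially the same approach as the paper: the paper defines the same pair of maps (evaluating the natural transformation on the canonical induced entwined module, using $C\o A$ with $\varepsilon_C \o id_A$ where you use $A\o C$ with $id_A \o \varepsilon_C$ --- a purely cosmetic difference) and defers the verification to \cite{DHBP}, whereas you supply the details by transplanting the naturality argument (the morphisms $\widetilde{u}\o id_C$ and $\rho^M$ into $M\o C$) from the paper's own proof of Proposition \ref{4.1}. Your closing observation that composition of natural transformations corresponds to the entwined convolution product is exactly what justifies the word ``algebra'' in the statement, and your check of $P\ci Q=id$ via (E4) is the mirror image of the paper's check via (E3).
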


\begin{proof}
See [\cite{DHBP}, Theorem 2.1 and Proposition 2.4]

Actually, one can define a map $\Pi:Nat(F \ci id, F \ci id)\rightarrow hom_k(C,A)$ by
\begin{eqnarray*}
\Pi(\theta):C\rightarrow A, ~~~~c\mapsto\sum (\v_C \o A)\theta_{C \o A}(c \o 1_A),
\end{eqnarray*}
where $\theta \in Nat(F \ci id, F \ci id)$, $c \in C$.
Define $\Sigma:hom_k(C,A)\rightarrow Nat(F \ci id, F \ci id)$ by
$$
\Sigma(g)_M: M\rightarrow M,~~m\mapsto m_0\c g(m_1),
$$
where $g \in hom_k(C,A)$, $M \in \mathcal {C}^C_A(\varphi)$, $m \in M$.
It is a direct computation to check that $\Pi$ and $\Sigma$ are well-defined and inverse with each other.
\end{proof}

\begin{corollary}\label{5.2}
$Nat(F \ci id, F \ci id)$ and $Nat(F \ci id, F \ci F \ci {}^{\ast\ast}())$ are isomorphic with each other.
\end{corollary}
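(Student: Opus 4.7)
The plan is to obtain this isomorphism as an immediate consequence of the two parametrizations already produced, both of which identify a set of natural transformations with the same vector space $hom_k(C,A)$. Proposition \ref{4.1} gives mutually inverse bijections $P,Q$ between $Nat(F \ci id, F \ci {}^{\ast\ast}())$ and $hom_k(C,A)$, while Proposition \ref{5.1} gives mutually inverse bijections $\Pi, \Sigma$ between $Nat(F \ci id, F \ci id)$ and $hom_k(C,A)$. Setting $\Phi := Q \ci \Pi$ and $\Psi := \Sigma \ci P$ then produces mutually inverse bijections between $Nat(F \ci id, F \ci id)$ and $Nat(F \ci id, F \ci {}^{\ast\ast}())$, which is exactly the content of the corollary.

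I would also record the explicit form of the correspondence for later use. If $\theta \in Nat(F \ci id, F \ci id)$ corresponds to $\mathfrak{g}=\Pi(\theta) \in hom_k(C,A)$, then the associated $\beta = Q(\mathfrak{g})$ is given by
\begin{eqnarray*}
\beta_M(m)(\mu) \;=\; \mu(m_0 \c \mathfrak{g}(m_1)) \;=\; \mu\bigl(\theta_M(m)\bigr),
\end{eqnarray*}
for any $M \in \mathcal{C}^C_A(\varphi)$, $m \in M$ and $\mu \in {}^\ast M$. In other words, the bijection is implemented concretely by the canonical evaluation pairing $M \cong {}^{\ast\ast} M$ in $Vec_k$, which is an isomorphism because every object is finite dimensional over $k$.

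The main ``obstacle'' is therefore only conceptual rather than technical: one has to notice that the same data $\mathfrak{g}:C\to A$ simultaneously parametrizes an endomorphism $\theta_M(m)=m_0\c\mathfrak{g}(m_1)$ of $M$ and a linear map $\beta_M:M\to{}^{\ast\ast}M$ evaluating $\mu$ against $\theta_M(m)$. Well-definedness and naturality on both sides are already supplied by Propositions \ref{4.1} and \ref{5.1}, so no additional verification is needed; the proof is a purely formal composition of bijections through the common parametrizing set $hom_k(C,A)$.
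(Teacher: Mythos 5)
Your proposal is correct and matches the paper's (implicit) argument exactly: the corollary is stated without proof precisely because it follows by composing the mutually inverse bijections $P,Q$ of Proposition \ref{4.1} with $\Pi,\Sigma$ of Proposition \ref{5.1} through the common parametrizing space $hom_k(C,A)$, which is what you do. Your added observation that the correspondence is realized concretely as $\beta_M = \iota_M \ci \theta_M$, with $\iota_M : M \to {}^{\ast\ast}M$ the canonical evaluation (an isomorphism since all objects are finite dimensional), is consistent with the explicit formulas for $Q$ and $\Sigma$ in the paper and is a harmless, correct refinement.
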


From now on, assume that $\theta \in Nat(F \ci id, F \ci id)$ and $g \in hom_k(C,A)$ are in correspondence with each other.

\begin{lemma}\label{5.3}
$\theta$ is a natural isomorphism if and only $g$ is invertible under the entwined convolution.
\end{lemma}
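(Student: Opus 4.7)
The plan is to enhance the bijection of Proposition \ref{5.1} to an algebra isomorphism between $(Nat(F \ci id, F \ci id), \ci, id_F)$ and $(hom_k(C,A), \star, \eta_A \ci \v_C)$. Once this is established, the lemma is immediate: algebra isomorphisms preserve invertibility and units, and a natural transformation $\theta$ is a natural isomorphism if and only if it is invertible in its endomorphism algebra (given componentwise invertibility, $M \mapsto (\theta_M)^{-1}$ is automatically natural by the standard argument, and any two-sided inverse in $Nat$ furnishes componentwise inverses).

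The key computation is $\Pi(\theta \ci \theta') = \Pi(\theta) \star \Pi(\theta')$. Writing $g = \Pi(\theta)$ and $g' = \Pi(\theta')$, I would evaluate on the universal element $c \o 1_A \in C \o A$. Using the $C$-coaction $\rho^{C \o A}(c \o a) = c_1 \o a_\vp \o c_2^\vp$ (which by (E3) reduces to $c_1 \o 1_A \o c_2$ when $a = 1_A$) and the $A$-action $(c \o a) \c x = c \o ax$, one computes $\theta'_{C \o A}(c \o 1_A) = c_1 \o g'(c_2)$. Applying $\theta$ once more and using coassociativity to relabel yields
\begin{eqnarray*}
\theta_{C \o A}(c_1 \o g'(c_2)) = c_1 \o g'(c_3)_\vp \, g(c_2^\vp),
\end{eqnarray*}
and then $\v_C \o id_A$ gives exactly $g'(c_2)_\vp g(c_1^\vp) = (g \star g')(c)$. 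Compatibility with units is the identity $\Sigma(\eta_A \ci \v_C)_M(m) = m_0 \v_C(m_1) = m$. Most of this is essentially contained in the cited [DHBP, Theorem 2.1 and Proposition 2.4].

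The main obstacle is not conceptual but rather the careful bookkeeping of the iterated coaction, the second $A$-action, and the entwining map in the correct order, so that the Sweedler indices line up exactly with the definition of the entwined convolution; in particular one must unfold $(m_0 \c g'(m_1))_0 \o (m_0 \c g'(m_1))_1$ via (E0) to push $\vp$ past the action before reassociating the coaction. Once the algebra-homomorphism property of $\Pi$ is verified on the generating object $C \o A$, invertibility of $\theta$ in $Nat(F, F)$ corresponds bijectively to invertibility of $g$ under $\star$, and the lemma follows.
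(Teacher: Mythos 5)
Your proposal is correct and follows essentially the same route as the paper: the paper's proof of Lemma \ref{5.3} is simply ``straightforward from Proposition \ref{5.1}'', which (via the cited [DHBP, Theorem 2.1 and Proposition 2.4]) already treats the bijection $\Pi$, $\Sigma$ as an isomorphism of algebras between $Nat(F\circ id, F\circ id)$ under vertical composition and $(hom_k(C,A),\star,\eta_A\circ\varepsilon_C)$, so invertibility transfers exactly as you argue. Your explicit verification that $\Sigma(g)\circ\Sigma(g') = \Sigma(g\star g')$ (equivalently $\Pi(\theta\circ\theta')=\Pi(\theta)\star\Pi(\theta')$), including the index bookkeeping via (E0) and the unit check, just fills in the details the paper leaves implicit, and your Sweedler computations are accurate.
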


\begin{proof}
Straightforward from Proposition \ref{5.1}.
\end{proof}

\begin{lemma}\label{5.4}
For any $(M,\varrho_M, \rho^M) \in \mathcal {C}^C_A(\varphi)$, $\theta_M$ is $A$-linear if and only if $g$ satisfies
\begin{eqnarray}
g(c)a = \sum a_\vp g(c^\vp),~~~~~~~~~~\mbox{for~any~}~c\in C,~a \in A.
\end{eqnarray}
\end{lemma}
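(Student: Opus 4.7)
The plan is to exploit the explicit form $\theta_M(m) = m_0 \c g(m_1)$ given by the bijection $\Sigma$ of Proposition \ref{5.1}, combined with the entwined compatibility (E0), which can be rewritten as $(m\c a)_0 \o (m \c a)_1 = \sum m_0 \c a_\vp \o {m_1}^\vp$. The strategy parallels the one used for Lemma \ref{4.4}: interpret the $A$-linearity condition for $\theta_M$ directly, and then test it on a universal entwined module to recover Eq.~(5.1).

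First I would compute both sides of the $A$-linearity condition. Using (E0) one obtains $\theta_M(m\c a) = \sum m_0 \c a_\vp \, g({m_1}^\vp)$, while the right-hand side equals $\theta_M(m) \c a = \sum m_0 \c g(m_1)\, a$. Hence requiring $A$-linearity of $\theta_M$ for every object $M \in \mathcal{C}^C_A(\varphi)$ is equivalent to demanding
\[
\sum m_0 \c a_\vp \, g({m_1}^\vp) = \sum m_0 \c g(m_1)\, a
\]
for all $m \in M$ and $a \in A$.

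The sufficiency direction ($\Leftarrow$) is then immediate: substituting $c = m_1$ under the summation in Eq.~(5.1) yields precisely the required equality in the $A$-module factor, so $\theta_M$ is $A$-linear.

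For the converse ($\Rightarrow$) I would specialize to the entwined module $M = C \o A$ recalled in Subsection 2.1, with $m = c \o 1_A$. By (E3), the coaction gives $(c \o 1_A)_0 \o (c \o 1_A)_1 = c_1 \o 1_A \o c_2$, so the displayed identity collapses to
\[
\sum c_1 \o a_\vp \, g({c_2}^\vp) = \sum c_1 \o g(c_2)\, a
\]
in $C \o A$. Applying $\v_C \o id_A$ and using the counit axiom on the left tensor factor recovers Eq.~(5.1). No step presents a real obstacle; the only non-automatic point is choosing the right universal test module, and $C \o A$ does the job just as it did in the proof of Lemma \ref{4.4}.
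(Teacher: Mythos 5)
Your proof is correct and follows essentially the same route as the paper: sufficiency by rewriting $\theta_M(m\c a)$ via the compatibility (E0) and substituting Eq.~(5.1), and necessity by testing $A$-linearity on a canonical entwined module and projecting out the $C$-factor with the counit. The only difference is cosmetic: the paper tests on $A\o C$ (where the $A$-action involves $\varphi$, so extracting Eq.~(5.1) additionally uses (E4) via $id_A\o\varepsilon_C$), whereas your choice of $C\o A$ with $m=c\o 1_A$ has trivial action in the second factor and yields Eq.~(5.1) slightly more directly.
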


\begin{proof}
$\Leftarrow$: Since the following diagram
$$\aligned
\xymatrix{
m\otimes a \ar@{}"3,2"_{(E0)} \ar[r]^-{\rho^M \o id_A} \ar[dd]_-{\varrho_M} &  m_0\otimes m_1 \otimes a \ar[d]^{id_M \o \varphi}\ar[rr]^-{id_M \otimes g \o id_A} & & m_0\otimes g(m_1)\otimes a \ar@{}"2,4"_{Eq.(5.1)}
\ar[r]^-{\varrho_M \o id_A} &  m_0\c g(a_1) \otimes a \ar[dd]^-{\varrho_M} \\
& m_0 \otimes a_\vp \otimes {m_1}^\vp \ar[d]^-{\varrho_M \o id_C}\ar[rr]^-{id_M \o id_A \o g} & & m_0 \otimes a_\vp \otimes g({m_1}^\vp) \ar[d]^-{\varrho_M \o id_A}\ar"1,5"|{id_M \o m_A} & \\
m \c a \ar[r]^-{\rho^M } & m_0 \c a_\vp \otimes {m_1}^\vp \ar[rr]^-{id_M \otimes g} & & m_0 \c a_\vp \otimes g({m_1}^\vp) \ar@{}"1,5"_{Eq.(5.1)} \ar[r]^-{\varrho_M } & m_0 \c g(m_1)a
}
\endaligned$$
commutes for any $m \in M$, $a\in A$, $\theta_M$ is an $A$-module morphism.

$\Rightarrow$: Conversely, for the entwined module $A\o C$, since $\theta_{A\o C}$ is $A$-linear, then for any $c \in C$ and $a \in A$, we have the following commute diagram
$$\aligned
\xymatrix{
(1_A\otimes c) \o a \ar[rr]^-{\varrho_{A \o C}}  \ar[d]_-{\theta_{A \o C}\o id_A } & & \sum a_\vp \o c^\vp \ar[d]^{\theta_{A \o C}} \\
\sum ({g(c_2)}_\vp \o {c_1}^\vp) \o a \ar[rr]^-{\varrho_{A \o C}} & & \sum (a_\vp \o c^\vp)_0 \c g((a_\vp \o c^\vp)_1)
}
\endaligned$$
which implies
\begin{eqnarray*}
\sum a_\vp {g({c^\vp}_2)}_\psi \o {{c^\vp}_1}^\psi = \sum {g(c_2)}_\vp a_\psi \o {{c_1}^\vp}^\psi.
\end{eqnarray*}
Take $id_A \o \v_C$ to action at the both side of the above equation, we immediately get Eq.(5.1).
\end{proof}

\begin{lemma}\label{5.5}
For any $(M,\varrho_M, \rho^M) \in \mathcal {C}^C_A(\varphi)$, $\theta_M$ is $C$-colinear if and only if $g$ satisfies
\begin{eqnarray}
g(c_1) \o c_2 = \sum {g(c_2)}_\vp \o {c_1}^\vp ,~~~~~~~~~~\mbox{for~any~}~c\in C.
\end{eqnarray}
\end{lemma}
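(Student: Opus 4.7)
The plan is to mimic the proof of Lemma \ref{5.4} closely, swapping the verification of $A$-linearity for that of $C$-colinearity, and adapting both the accompanying diagram and the test module used in the converse direction.

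For the $(\Leftarrow)$ direction I would start from the explicit formula $\theta_M(m) = m_0 \c g(m_1)$ provided by the correspondence of Proposition \ref{5.1}. On the one hand, applying the entwining condition (E0) to the right action of $g(m_1)$ on $m_0$ and then using coassociativity of $\rho^M$ yields
\begin{equation*}
\rho^M(\theta_M(m)) = m_0 \c g(m_2)_\vp \o {m_1}^\vp.
\end{equation*}
On the other hand, $(\theta_M \o id_C)\rho^M(m) = m_0 \c g(m_1) \o m_2$. Acting by $m_0 \c -$ on the first tensor factor of the hypothesis Eq.(5.2) taken at $c = m_1$ (and relabelling Sweedler indices via coassociativity) converts one expression into the other, so $\rho^M \ci \theta_M = (\theta_M \o id_C)\ci \rho^M$. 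A commutative diagram of the same shape as the one appearing in Lemma \ref{5.4} packages this calculation.

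For the $(\Rightarrow)$ direction I would test $C$-colinearity of $\theta_{A\o C}$ on the element $1_A \o c$, using the entwined module structure on $A\o C$ recalled in Section 2.1. A direct computation of both sides of the $C$-colinearity square produces the identity
\begin{equation*}
g(c_2)_\vp \o (c_1{}^\vp)_1 \o (c_1{}^\vp)_2 = g(c_{12})_\vp \o c_{11}{}^\vp \o c_2
\end{equation*}
in $A \o C \o C$. Applying $id_A \o \v_C \o id_C$ and using the counit axiom on the left-hand side, together with the entwining identity (E4) combined with $\v_C(c_{11})c_{12} = c_1$ on the right-hand side, extracts precisely Eq.(5.2).

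No step is genuinely delicate; the only care required is the bookkeeping of Sweedler indices and $\vp$-labels when coassociativity and the entwining relation (E0) are used simultaneously. The main potential obstacle is notational rather than conceptual, namely ensuring that the counit contraction on the two sides meshes correctly with (E4) to yield the claimed identity.
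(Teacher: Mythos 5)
Your proposal is correct and is exactly the argument the paper intends: its proof of Lemma \ref{5.5} is simply ``similar to Lemma \ref{5.4}'', and you flesh that out faithfully---the forward direction via the explicit formula $\theta_M(m)=m_0\c g(m_1)$, compatibility (E0), and coassociativity, and the converse by testing colinearity on the canonical entwined module $A\o C$ at $1_A\o c$ and contracting the middle factor with $\v_C$ using (E4). Both the diagram-chase and the counit extraction check out, so no gap.
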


\begin{proof}
Be similar with Lemma \ref{5.4}.
\end{proof}

\begin{lemma}\label{5.6}
Suppose that $\theta$ is a natural transformation in $\mathcal {C}^C_A(\varphi)$, then $\theta$ is a twist if and only if for any $x,y \in C$, $g$ satisfies
\begin{eqnarray}
&&\Delta_A(g(xy)) = \nonumber\\
&&~~~~~~~~\sum g(x_1) {r^{(2)}(x_3 \o y_3)}_\psi R^{(1)}({y_2}^\vp \o {x_2}^\psi) \o g(y_1) {r^{(1)}(x_3 \o y_3)}_\vp R^{(2)}({y_2}^\vp \o {x_2}^\psi).
\end{eqnarray}
\end{lemma}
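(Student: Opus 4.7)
The plan is to use the same test-on-the-universal-module strategy as in Lemmas \ref{5.4} and \ref{5.5}, combined with the bijection $\Sigma$ of Proposition \ref{5.1}. Under $\Sigma$, the twist axiom $\theta_{V\o W}=\mathbf{C}_{W,V}\mathbf{C}_{V,W}\ci(\theta_V\o\theta_W)$ becomes, after unfolding the tensor product (co)action on $V\o W$ and the definition of $\theta$,
$$
(v_0\o w_0)\c g(v_1 w_1) \;=\; \mathbf{C}_{W,V}\mathbf{C}_{V,W}\bigl(v_0\c g(v_1)\o w_0\c g(w_1)\bigr)
$$
for every $V,W\in\mathcal{C}^C_A(\vp)$ and $v\in V$, $w\in W$. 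My first step is therefore to rewrite both sides explicitly using the braiding formula (2.1).

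For the ``only if'' direction, I would specialize to the universal entwined modules $V=W=C\o A$ with $v=x\o 1_A$, $w=y\o 1_A$. By axiom (E3) the coactions simplify to $(x\o 1_A)_0\o(x\o 1_A)_1=x_1\o 1_A\o x_2$ and similarly for $y$, so the tensor-product $A$-action on $(C\o A)\o(C\o A)$ turns the LHS into $(x_1\o g(x_2y_2)_1)\o(y_1\o g(x_2y_2)_2)$; applying $\v_C\o\mathrm{id}_A\o\v_C\o\mathrm{id}_A$ extracts exactly $\D_A(g(xy))$. For the RHS I would first evaluate $(\theta_V\o\theta_W)(v\o w)=(x_1\o g(x_2))\o(y_1\o g(y_2))$ and then unfold the two braidings. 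The first $\mathbf{C}_{V,W}$ introduces one copy $R$ of the R-matrix, and passing the resulting coactions back through (E0) forces an entwining $\vp$. The second braiding $\mathbf{C}_{W,V}$ requires the updated coactions of the already-twisted factors, producing an independent second copy $r$ together with another entwining $\psi$. Applying $\v_C\o\mathrm{id}_A\o\v_C\o\mathrm{id}_A$ then collapses the outcome to precisely the right-hand side of (5.3).

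For the converse, I would verify the twist axiom on a general pair $V,W$ by computing both sides in the same manner. The LHS expands to $v_0\c g(v_1w_1)_1\o w_0\c g(v_1w_1)_2$, while the RHS, after unfolding the two braidings with the help of (E0) and the double-quantum-group axioms (E7)--(E10) governing how $R$ and $r$ interact with the coaction, collapses to the same expression, precisely because (5.3) holds when applied at $c=v_1$, $d=w_1$ after relabelling with the iterated coactions. The naturality required to conclude that the resulting $\theta$ is indeed a twist is already guaranteed by Proposition \ref{5.1}.

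The main obstacle is the bookkeeping in the double-braiding step: tracking the two independent copies $R$ and $r$ of the R-matrix, the two entwinings $\vp$ and $\psi$ they each force via (E0), and the iterated Sweedler indices on the $C$-legs of the test element, so that the nine-index expression on the right-hand side of (5.3) matches the concrete calculation position-for-position. Apart from this indexing task, the argument follows exactly the same pattern as Lemmas \ref{5.4} and \ref{5.5}.
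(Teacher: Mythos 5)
Your proposal is correct and takes essentially the same route as the paper: the ``$\Leftarrow$'' direction is the same direct verification of the twist axiom on arbitrary $M\o N$ using the correspondence $\theta\leftrightarrow g$ together with Eq.(5.2) and (E1), and the ``$\Rightarrow$'' direction is the same test-on-universal-modules argument with counit extraction. The only (harmless) deviations are that you test on $V=W=C\o A$ where the paper uses the pair $(C\o A)\o(A\o C)$ --- both choices recover Eq.(5.3) after stripping the entwinings off $g$ via (E1) and Eq.(5.2), a rearrangement you file under ``bookkeeping'' --- and that the identities actually needed there are (E1), (E2) and Lemma \ref{5.5}'s Eq.(5.2) (available since $\theta$ is assumed natural in $\mathcal{C}^C_A(\varphi)$), not the double-quantum-group axioms (E7)--(E10) you cite, which only guarantee that $\mathbf{C}$ is a braiding.
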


\begin{proof}
$\Leftarrow$: For any $M,N \in \mathcal {C}^C_A(\varphi)$ and $m \in M$, $n \in N$, we compute that
\begin{eqnarray*}
&&( \mathbf{C}_{N,M} \ci \mathbf{C}_{M,N} \ci (\theta_M \o \theta_N) ) (m \o n) \\
&=&( \mathbf{C}_{N,M} \ci \mathbf{C}_{M,N} ) ( m_0\c g(m_1) \o n_0\c g(n_1) ) \\
&\stackrel {(5.2)}{=}& \mathbf{C}_{N,M} (\sum  ( n_{00} \c {g(n_1)}_\vp \o m_{00} \c {g(m_1)}_\psi ) \c R({m_{01}}^\psi \o {n_{01}}^\vp ) )  \\
&\stackrel {(E1)}{=}&  \sum m_0 \c  g(m_2)_\vp R^{(2)}(m_{3} \o n_{3})_\phi r^{(1)} ( {n_1}^{\psi\chi} \o {m_1}^{\vp\phi} )  \o n_0 \c  g(n_2)_\psi R^{(1)}(m_{3} \o n_{3})_\chi r^{(2)} ( {n_1}^{\psi\chi} \o {m_1}^{\vp\phi})      \\
&\stackrel {(5.2)}{=}&    \sum m_0 \c  g(m_1) R^{(2)}(m_{3} \o n_{3})_\vp r^{(1)} ( {n_2}^{\psi} \o {m_2}^{\vp} )  \o n_0 \c  g(n_1) R^{(1)}(m_{3} \o n_{3})_\psi r^{(2)} ( {n_2}^{\psi} \o {m_2}^{\vp})      \\
&\stackrel {(5.3)}{=}&   (m_0 \o n_0) \c (g(m_1n_1)),
\end{eqnarray*}
which implies $\theta$ is a twist.

$\Rightarrow$: Conversely, for the entwined modules $C \o A$ and $A\o C$, since $\theta$ is a twist, then for any $x,y \in C$ we have
\begin{eqnarray*}
&&~~( \mathbf{C}_{A \o C,C \o A} \ci \mathbf{C}_{C \o A, A\o C} \ci (\theta_{C \o A} \o \theta_{A \o C}) ) ( (x \o 1_A) \o (1_A \o y) ) \\
&&= \theta_{C \o A, A \o C} ( (x \o 1_A) \o (1_A \o y) ),
\end{eqnarray*}
Since
\begin{eqnarray*}
&& ( \mathbf{C}_{A \o C,C \o A} \ci \mathbf{C}_{C \o A, A\o C} \ci (\theta_{C \o A} \o \theta_{A \o C}) ) ( (x \o 1_A) \o (1_A \o y) ) \\
&\stackrel {(5.2)}{=}&( \mathbf{C}_{A \o C,C \o A} \ci \mathbf{C}_{C \o A, A\o C} ) (  (x_1 \o g(x_2)) \o ( g(y_1) \o y_2 )  ) \\
&=& \mathbf{C}_{A \o C,C \o A} ( \sum ( g(y_1){R^{(1)}(x_3 \o y_3)}_\vp \o {y_2}^\vp ) \o ( x_1 \o g(x_2)R^{(2)}(x_3 \o y_3)  ) ) \\
&\stackrel {(E1)}{=}& \sum x_1 \o {g(x_3)}_\psi {R^{(2)}(x_4 \o y_3)}_\phi r^{(1)}( {{y_2}^\vp}_2 \o {x_2}^{\psi\phi} ) \o g(y_1){R^{(1)}(x_4 \o y_3)}_\vp {r^{(2)}( {{y_2}^\vp}_2 \o {x_2}^{\psi\phi} )}_\chi \o {{{y_2}^\vp}_1}^\chi,
\end{eqnarray*}
and
\begin{eqnarray*}
&&\theta_{C \o A, A \o C} ( (x \o 1_A) \o (1_A \o y) )\\
&=& x_1 \o {g(x_1y_1)}_1 \o {g(x_1y_1)}_2 \o y_1,
\end{eqnarray*}
we have
\begin{eqnarray*}
&&\sum x_1 \o {g(x_3)}_\psi {R^{(2)}(x_4 \o y_3)}_\phi r^{(1)}( {{y_2}^\vp}_2 \o {x_2}^{\psi\phi} ) \o g(y_1){R^{(1)}(x_4 \o y_3)}_\vp {r^{(2)}( {{y_2}^\vp}_2 \o {x_2}^{\psi\phi} )}_\chi \o {{{y_2}^\vp}_1}^\chi        \\
&&~~~~=x_1 \o {g(x_1y_1)}_1 \o {g(x_1y_1)}_2 \o y_1.
\end{eqnarray*}

Take $\v_C \o id_A id_A \o \v_C$ to action at the both side of the above equation, we immediately get Eq.(5.3).
\end{proof}

Recall from Theorem \ref{3.1}, we get that $\mathcal {C}^C_A(\varphi)$ is a rigid category. Then we get the following property.

\begin{lemma}\label{5.7}
$\theta$ is self-dual in $\mathcal {C}^C_A(\varphi)$ if and only if $g$ satisfies
\begin{eqnarray}
g(c) = \sum {(S^{-1}_A g S_C ( c^\vp))}_\vp,~~~~\mbox{for~any~}c\in C.
\end{eqnarray}
Or equivalently,
\begin{eqnarray}
g(c) = \sum  {a_i}_\vp  a^i ( S^{-1}_A g S_C ( {c}^\vp ) ),~~~~\mbox{for~any~}\g \in A^\ast,~c\in C,
\end{eqnarray}
where $a_i$ and $a^i$ are bases of $A$ and $A^\ast$ respectively, dual to each other.
\end{lemma}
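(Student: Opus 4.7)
The plan is to unfold both sides of the self-duality identity $\theta_{V^\ast}=(\theta_V)^\ast$ and reduce it to a single equation in $hom_k(C,A)$. By Theorem \ref{3.1} the category is rigid, so the transpose is defined, and by (TR1)--(TR2) we have $(\theta_V)^\ast(\mu)(v)=\mu(\theta_V(v))=\mu(v_0\cdot g(v_1))$ for $\mu\in V^\ast$, $v\in V$. On the other hand, using the dual $A$-action and $C$-coaction on $V^\ast$ introduced in Section~3, direct computation from $\theta_{V^\ast}(\mu)=\mu_0\cdot g(\mu_1)$ gives $\theta_{V^\ast}(\mu)(v)=\sum\mu_0(v\cdot S_A^{-1}(g(\mu_1)))$.

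The main technical step is to rewrite this last expression as an evaluation of $\mu$ on a specific element of $V$. I would introduce dual bases $\{a_i\}$ of $A$ and $\{a^i\}$ of $A^\ast$, expand $S_A^{-1}(g(\mu_1))=\sum_i a^i(g(\mu_1))\,S_A^{-1}(a_i)$, and then apply the defining coaction relation $\sum\mu_0(w)\otimes\mu_1=\mu(w_0)\otimes S_C(w_1)$ (with $w=v\cdot S_A^{-1}(a_i)$) together with the entwining compatibility $(w\cdot b)_0\otimes(w\cdot b)_1=w_0\cdot b_\varphi\otimes w_1^\varphi$ from (E0). This yields
\[
\theta_{V^\ast}(\mu)(v)=\sum_i\mu\bigl(v_0\cdot(S_A^{-1}(a_i))_\varphi\bigr)\,a^i\bigl(g(S_C(v_1^\varphi))\bigr).
\]

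Since $V^\ast$ separates points of the finite-dimensional space $V$, self-duality is therefore equivalent to $v_0\cdot g(v_1)=\sum_i v_0\cdot(S_A^{-1}(a_i))_\varphi\,a^i(g(S_C(v_1^\varphi)))$ holding in every entwined module $V$. For $(\Leftarrow)$, the change of basis $b_i:=S_A^{-1}(a_i)$ (with dual basis $b^i=a^i\circ S_A$) converts (5.5) into $g(c)=\sum_i(S_A^{-1}(a_i))_\varphi\,a^i(g(S_C(c^\varphi)))$, and acting by $v_0\cdot(-)$ with $c=v_1$ gives the required equality for arbitrary $v\in V$. For $(\Rightarrow)$, I would specialize to $V=C\otimes A$, $v=c\otimes 1_A$ and $\mu=\varepsilon_C\otimes\gamma$ with arbitrary $\gamma\in A^\ast$; by (E3) the test vector satisfies $v_0\otimes v_1=c_1\otimes 1_A\otimes c_2$, so both sides collapse to $\gamma(g(c))=\sum_i\gamma((S_A^{-1}(a_i))_\varphi)\,a^i(g(S_C(c^\varphi)))$, and varying $\gamma$ recovers (5.5) after the same change of basis. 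The equivalence between (5.4) and (5.5) is then the contraction identity $\sum_i(a_i)_\varphi\,a^i(X)=X_\varphi$ applied with $X=S_A^{-1}gS_C(c^\varphi)$.

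The hardest part will be the bookkeeping in the main computation: the dual coaction carries $S_C$, the dual action carries $S_A^{-1}$, and they interact nontrivially with $\varphi$. Each substitution must be applied in the correct order so that the final identity lives purely in $hom_k(C,A)$, without residual dependence on $V$, $v$, or $\mu$.
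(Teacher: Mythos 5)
Your proposal is correct and follows essentially the same route as the paper: both directions unfold $\theta_{V^\ast}=(\theta_V)^\ast$ using the dual $A$-action and $C$-coaction together with (E0) and dual bases of $A$, the backward direction being the general computation and the forward direction specializing to $(C\o A)^\ast$ with test elements $c\o 1_A$ and $\varepsilon_C\o\gamma$ (the paper writes this functional as $\gamma\o\varepsilon_C$, a purely notational difference). Your explicit change of basis $b_i=S_A^{-1}(a_i)$, $b^i=a^i\circ S_A$ is a welcome clarification of a step the paper performs silently when it moves $S_A^{-1}$ through the dual-basis contraction, but it does not change the substance of the argument.
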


\begin{proof}
$\Leftarrow$:
For any object $M \in \mathcal {C}^C_A(\varphi)$, suppose that $o_i$ and $o^i$ are dual bases of $M$ and $M^\ast$, $a_i$ and $a^i$ are dual bases of $A$ and $A^\ast$, $\m \in M^\ast$, $m \in M$, then we have
\begin{eqnarray*}
\theta_{M^\ast}(\mu)(m)&=& (\m_0 \c g(\m_1))(m)=\m_0( m \c S^{-1}_Ag(\m_1) ) \\
&\stackrel {(TR2)}{=}& \sum (\varrho_M)^\ast (\m_0) ( m \o S^{-1}_Ag(\m_1) )\\
&\stackrel {(TR1)}{=}& \sum \m_0( o_i \c a_i ) a^i(S^{-1}_Ag(\m_1)) o^i(m)\\
&=& \sum \m({o_i}_0 \c {a_i}_\vp) a^i( S^{-1}_AgS_C({{o_i}_1}^\vp) )o^i(m)\\
&=& \sum \m(m_0 \c {( S^{-1}_AgS_C({m_1}^\vp) )}_\vp)\\
&\stackrel {(5.4)}{=}& \m( m_0 \c g(m_1) ) = {(\theta_M)}^\ast(\m)(m).
\end{eqnarray*}
Thus $\theta$ is self-dual in $\mathcal {C}^C_A(\varphi)$.

$\Rightarrow$: Conversely, for $(C \o A)^\ast \in \mathcal {C}^C_A(\varphi)$, since $\theta$ is self-dual, we have
\begin{eqnarray*}
\theta_{{(C \o A)}^\ast}(\g \o \v_C)(c \o 1_A) = {(\theta_{C \o A})}^\ast(\g \o \v_C)(c \o 1_A), ~~\mbox{where~}\g\in A^\ast,~c \in C.
\end{eqnarray*}

For one thing, consider that
\begin{eqnarray*}
{(\theta_{C \o A})}^\ast(\g \o \v_C)(c \o 1_A)&=& (\g \o \v_C)( (c_1 \o 1_A) \c g(c_2) )\\
&=& \g(g(c)).
\end{eqnarray*}

For another, we compute
\begin{eqnarray*}
&&\theta_{{(C \o A)}^\ast}(\g \o \v_C)(c \o 1_A)\\
&\stackrel {(TR2)}{=}& \sum (\varrho_{C \o A})^\ast ({(\g \o \v_C)}_0) ( (c \o 1_A) \o S^{-1}_A g({(\g \o \v_C)}_1) )\\
&\stackrel {(TR1)}{=}& \sum {(\g \o \v_C)}_0( (c_i \o b_i) \c a_i ) a^i(S^{-1}_A g({(\g \o \v_C)}_1)) (b^i \o c^i) (c \o 1_A) \\
&=& \sum (\g \o \v_C) ( {c_i}_1 \o {(b_i a_i)}_\vp ) a^i ( S^{-1}_A g S_C ( {{c_i}_2}^\vp ) ) c^i(c) b^i(1_A) \\
&=& \sum \g( {a_i}_\vp ) a^i ( S^{-1}_A g S_C ( {c}^\vp ) ) = \g( (S^{-1}_A g S_C (c^\vp))_\vp ),
\end{eqnarray*}
where $c_i$ and $c^i$ are dual bases of $C$ and $C^\ast$, $a_i$ and $a^i$, $b_i$ and $b^i$ are two dual bases of $A$ and $A^\ast$.
Hence Eq.(5.4) holds.
\end{proof}

\begin{definition}\label{5.8}
Assume that $C, A$ are two Hopf algebras with bijective antipodes over a field $k$, and $(C, A, \varphi, R)$ is a double quantum group. If there exists a $k$-linear map $g:C\rightarrow A$, such that $g$ is invertible under the entwined comvolution, and Eqs.(5.1)-(5.4) are satisfied, then $g$ is called an \emph{entwined ribbon morphism} over $(C,A,\vp,R)$. Further, $(C,A,\varphi,R,g)$ is called a \emph{ribbon entwined datum}.
\end{definition}

Combining Proposition \ref{5.1} - Lemma \ref{5.7}, we can get our main theorem below.

\begin{theorem}\label{5.9}
Assume that $C,A$ are two Hopf algebras with bijective antipodes over $k$, $\varphi:C\o A \rightarrow A\o C$ and $R: C \o C\rightarrow A\o A $ are two $k$-linear maps such that $(C, A, \varphi, R)$ is a double quantum group. Then $\mathcal {C}^C_A(\varphi)$ is a ribbon category if and only if there is an entwined ribbon morphism $g\in Hom_k(C,A)$. Moreover, the ribbon structure $\theta$ in $\mathcal {C}^C_A(\varphi)$ is defined by
\begin{eqnarray*}
\theta_M:M\rightarrow M,~~\theta_M(m) = m_0 \c g(m_1)),~~\mbox{where~}~m \in M
\end{eqnarray*}
for any $(M,\theta_M,\rho^M) \in \mathcal {C}^C_A(\varphi)$.
\end{theorem}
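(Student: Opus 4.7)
The plan is to assemble Theorem \ref{5.9} directly from the preparatory results \ref{5.1}--\ref{5.7}, so the proof will be essentially a bookkeeping argument once the correspondence in Proposition \ref{5.1} is in place. Recall that a ribbon category is by definition a braided rigid category equipped with a self-dual twist. Since $(C,A,\varphi,R)$ is a double quantum group, $\mathcal{C}^C_A(\varphi)$ is already braided with braiding given by Eq.~(2.1), and by Theorem \ref{3.1} it is rigid. So the only issue is the existence of a self-dual twist, and this is exactly what an entwined ribbon morphism is designed to encode.

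First I would use Proposition \ref{5.1} to translate the search for a twist into a search for a $k$-linear map $g:C\to A$. Concretely, any candidate twist $\theta$ in $\mathcal{C}^C_A(\varphi)$ restricts, via the forgetful functor $F$, to an element of $Nat(F\circ id,F\circ id)$, which corresponds to some $g=\Pi(\theta)\in\hom_k(C,A)$; conversely, given $g$, the formula $\theta_M(m)=m_0\cdot g(m_1)$ defines a natural transformation of the underlying vector space valued functor. The content of the theorem is then that $\theta$ is a self-dual twist in $\mathcal{C}^C_A(\varphi)$ if and only if $g$ is an entwined ribbon morphism in the sense of Definition \ref{5.8}.

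For the forward implication, assume $\theta$ is a self-dual twist. I would apply Lemma \ref{5.3} to deduce that $g$ is invertible under the entwined convolution (since $\theta$ must be a natural \emph{isomorphism}), Lemma \ref{5.4} to obtain Eq.~(5.1) (since each $\theta_M$ is $A$-linear), Lemma \ref{5.5} to obtain Eq.~(5.2) (since each $\theta_M$ is $C$-colinear), Lemma \ref{5.6} to obtain Eq.~(5.3) (from the twist condition $\theta_{M\otimes N}=\mathbf{C}_{N,M}\mathbf{C}_{M,N}(\theta_M\otimes\theta_N)$), and Lemma \ref{5.7} to obtain Eq.~(5.4) (from self-duality $\theta_{M^\ast}=(\theta_M)^\ast$). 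By Definition \ref{5.8}, this is exactly the datum of an entwined ribbon morphism $g\in\hom_k(C,A)$.

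For the converse, assume $g$ is an entwined ribbon morphism and define $\theta_M(m)=m_0\cdot g(m_1)$. I would invoke the \emph{if} directions of the same five lemmas in turn to conclude that $\theta_M$ is $A$-linear, $C$-colinear, naturally invertible, satisfies the twist identity, and is self-dual. Thus $\theta$ is a self-dual twist on the braided rigid category $\mathcal{C}^C_A(\varphi)$, so $\mathcal{C}^C_A(\varphi)$ is ribbon. Since no genuinely new computation is required beyond what Lemmas \ref{5.3}--\ref{5.7} already establish, the only potential obstacle is a compatibility check ensuring that the characterisations of the five properties really are independent (so that invoking them jointly is legitimate); but each lemma gives an \emph{iff} statement in terms of $g$ alone, so they combine without conflict, and the theorem follows.
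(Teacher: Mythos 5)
Your proposal is correct and follows essentially the same route as the paper, which proves Theorem \ref{5.9} precisely by combining Proposition \ref{5.1} with Lemmas \ref{5.3}--\ref{5.7}: the bijective correspondence $\theta\leftrightarrow g$ reduces each of the five defining properties of a self-dual twist (natural isomorphism, $A$-linearity, $C$-colinearity, the twist identity, self-duality) to the corresponding condition on $g$, together with the rigidity from Theorem \ref{3.1} and the braiding from the double quantum group structure. Your only addition is making explicit the harmless observation that the five \emph{iff} characterisations combine without conflict, which the paper leaves implicit.
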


\begin{example}
If $C=k$, $\varphi=id_A$, then the double quantum group $(C,A,\vp,R)$ becomes a quasitriangular Hopf algebra $(A,R)$, where $R$ means the $R$-matrix in $A$.
And the entwined ribbon morphism becomes an invertible element $g \in A$ satisfies\\
$\left\{\begin{array}{l}
(1)~g \mbox{~is~in~the~center~of~} A;\\
(2)~\Delta(g)= (g \o g)R_{21}R;\\
(3)~g=S(g),
\end{array}\right.$
\\
which implies $g$ is a usual ribbon element in $A$, thus $A$ is a ribbon Hopf algebra.
\end{example}

\begin{example}
If $A=k$, $\varphi=id_C$, then the double quantum group $(C,A,\vp,R)$ becomes a coquasitriangular Hopf algebra $(C,R)$.
And the entwined ribbon morphism is a convolution invertible $k$-linear character $g \in C^\ast$, satisfies\\
$\left\{\begin{array}{l}
(1)~g(c_1)c_2 = c_1g(c_2);\\
(2)~g(cd)= g(c_1)g(d_1)R(c_2 \o d_2)R(d_3 \o c_3) ;\\
(3)~g(c)=g(S(c)),
\end{array}\right.$
\\
for any $c,d \in C$, which implies $g$ is a coribbon form on $C$, thus $C$ is a coribbon Hopf algebra.
\end{example}

\begin{example}
Assume that $(C,A,\varphi,R,g)$ is a ribbon entwined datum. If the following identity hold
$$
\sum a_\varphi \o (1_C)^\varphi = a \o 1_C, ~~~~\mbox{for~any~}a \in A,
$$
then $(A,R(1_C \o 1_C))$ is a quasitriangular Hopf algebra. Further,
$(A,g(1_C))$ is a ribbon Hopf algebra.

If the following identity hold
$$
\sum \varepsilon_A(a_\varphi) c^\varphi = \varepsilon_A(a)c, ~~~\mbox{for~any~}c \in C,~a\in A,
$$
then $(C, (\v_A\o \v_A) \ci R)$ is a coquasitriangular Hopf algebra. Further,
 $(C, \varepsilon_A\ci g)$ is a coribbon Hopf algebra.
\end{example}

\section{\textbf{Entwined smash product}}

\vskip 0.5cm
\subsection{\textbf{The entwined smash product}}
\def\theequation{6.1.\arabic{equation}}
\setcounter{equation} {0} \hskip\parindent
\vskip 0.5cm

\begin{definition}
Let $A$, $B$ be algebras in a monoidal category $\mathcal{C}$. A morphism $\Phi:B \o A\rightarrow A \o B$ in $\mathcal{C}$ is called an \emph{algebra distributive law} if $\Phi$ satisfying
$$\aligned
\xymatrix
{
B \o B \o A \ar[d]_-{id_B \o \Phi}\ar"1,3"^-{m_B \o id_A} & &B \o A \ar[d]^-{\Phi}\\
B \o A \o B \ar[r]_-{\Phi \o id_B} & A \o B \o B \ar[r]_-{id_A \o m_B} & A \o B ,
}~~~
\xymatrix{
A \ar[r]^-{\eta_B \o id_A} \ar[dr]_-{id_A \o \eta_B} & B \o A \ar[d]^{\Phi}  \\
 & A \o B , }
\endaligned$$
$$\aligned
\xymatrix
{
B \o A \o A \ar[d]_-{\Phi \o id_A}\ar"1,3"^-{id_B \o m_A} & & B \o A \ar[d]^-{\Phi}\\
A \o B \o A \ar[r]_-{ id_A \o\Phi} & A \o A \o B \ar[r]_-{m_A \o id_B} & A \o B ,
}~~~
\xymatrix{
B \ar[dr]_-{\eta_A \o id_B} \ar[r]^-{id_B \o \eta_A} & B \o A \ar[d]^{\Phi}  \\
 & A \o B . }
\endaligned$$
\end{definition}

Be similar with [\cite{sgz1}, Theorem 8], we have the following property.

\begin{lemma} \label{6.0}
Let $C$ be a finite dimensional coalgebra and $A$ a finite dimensional algebra over $k$. Then give an entwining map $\varphi: C \o A\rightarrow A \o C$ is identified to give an algebra distributive law $\Phi : A \o C^{\ast op}\rightarrow C^{\ast op} \o A$.
\end{lemma}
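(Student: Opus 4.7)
The plan is to exhibit an explicit bijection between entwining maps $\varphi:C\o A\to A\o C$ and algebra distributive laws $\Phi:A\o C^{\ast op}\to C^{\ast op}\o A$, then verify that the four axioms (E1)--(E4) translate termwise into the four distributive-law axioms. Finite-dimensionality of $C$ enters only through the canonical identification $C^{\ast}\o A\cong hom_k(C,A)$, which lets us interpret the second tensor slot of $\Phi$ as a space of $A$-valued linear maps on $C$.

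Concretely, I would send $\varphi$ to the map
$$\Phi(a\o f)(c):=(id_A\o f)\varphi(c\o a)=\sum f(c^\varphi)\,a_\varphi,$$
viewed as an element of $hom_k(C,A)\cong C^{\ast}\o A$, with inverse $\Phi\mapsto\varphi$ given by $\varphi(c\o a):=\sum_i\Phi(a\o c^i)(c)\o c_i$ for any dual basis $\{c_i\},\{c^i\}$ of $C$ and $C^{\ast}$. A short check shows these are mutually inverse and basis-independent. For the axiom correspondence I would evaluate everything on an arbitrary $c\in C$. The two unit conditions are immediate: $\Phi(1_A\o f)(c)=f(c)\,1_A$ is exactly (E3), and $\Phi(a\o\varepsilon_C)(c)=\varepsilon_C(c)\,a$, using $\eta_{C^{\ast op}}=\varepsilon_C$, is exactly (E4). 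For the $A$-multiplication axiom, unfolding $(id\o m_A)\circ(\Phi\o id_A)\circ(id_A\o\Phi)(a\o b\o f)$ at $c$ and pulling scalars through the two nested applications of $\Phi$ produces $\sum f(c^{\varphi\psi})\,a_\varphi b_\psi$, which matches the direct expression $\Phi(ab\o f)(c)=f(c^\varphi)(ab)_\varphi$ for all $f,c$ precisely when (E1) holds. For the $C^{\ast op}$-multiplication axiom, using $(f\cdot^{op}g)(c)=g(c_1)f(c_2)$ and iterating $\Phi$ yields $\sum g(c_1^\psi)f(c_2^\varphi)a_{\varphi\psi}$, which equals $\Phi(a\o f\cdot^{op}g)(c)=g((c^\varphi)_1)f((c^\varphi)_2)a_\varphi$ for all $f,g,c$ exactly when (E2) holds.

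The main obstacle, and the only non-obvious point, is explaining why the \emph{opposite} multiplication on $C^{\ast}$ appears rather than the natural one. The nested application of $\Phi$ composes two dual-basis contractions in an order that, when paired against $\Delta_C$, swaps the two Sweedler tensorands of $c$; taking $C^{\ast op}$ in place of $C^{\ast}$ exactly undoes this swap so that the pairings $c_1\leftrightarrow\psi$ and $c_2\leftrightarrow\varphi$ on the right-hand side of (E2) come out correctly. Once this combinatorial point is pinned down, the remaining verifications are routine Sweedler-index bookkeeping, and they run symmetrically in both directions $\varphi\to\Phi$ and $\Phi\to\varphi$, so the bijection of underlying morphisms automatically intertwines the two sets of axioms.
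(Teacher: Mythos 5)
Your proposal is correct and matches the paper's proof: the paper defines exactly the same correspondence, $\Phi(a\o p)=\sum p({e_i}^\varphi)\,e^i\o a_\varphi$ with inverse $\varphi(c\o a)=\sum {e^i}^\Phi(c)\,a_\Phi\o e_i$, which is your formula under the identification $C^{\ast}\o A\cong hom_k(C,A)$. The paper leaves the axiom checks as ``straightforward,'' whereas you carry them out and correctly match (E1)--(E4) to the four distributive-law diagrams, including the point that the opposite convolution on $C^{\ast}$ is what makes the Sweedler legs in (E2) pair up correctly.
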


\begin{proof}
If there is an entwining map $\varphi: c\o a \mapsto \sum a_\varphi \o c^\varphi$, one can define a linear map $\Phi : A \o C^{\ast op}\rightarrow C^{\ast op} \o A$ by
$$
\Phi(a \o p)=\sum p^\Phi \o a_\Phi := \sum p({e_i}^\varphi) e^i \o a_\varphi,
$$
where $c \in C$, $a \in A$, $p \in C^{\ast}$, $e_i$ and $e^i$  are dual bases of $C$ and $C^\ast$. It is straightforward to see that $\Phi$ is an algebra distributive law.

Conversely, if is an algebra distributive law $\Phi: a\o p \mapsto \sum p^\Phi \o a_\Phi$, one can define a linear map $\varphi : C \o A\rightarrow A \o C$ by
$$
\varphi(c \o a)=\sum a_\varphi \o c^\varphi := \sum {e^i}^\Phi(c) a_\Phi \o e_i.
$$
Also it can be easily checked that $\varphi$ is an entwining map.
\end{proof}

Recall from [\cite{sgz}, Theorem 2.5], if there is an algebra distributive law $\Phi:B \o A\rightarrow A \o B$, then $(A \o B, (m_A \o m_B)\ci(id_A \o \Phi \o id_B), \eta_A \o \eta_B)$ is also an algebra.

Now we supposed that $(C,A, \varphi)$ is a monoidal entwining datum over $k$ where $C, A$ are two Hopf algebras with bijective antipodes.

\begin{definition} \label{6.1}
The \emph{entwined smash product} ${C^{\ast}}^{op} \o A$ of the entwining structure $(C,A,\varphi)$, in a form containing ${C^{\ast}}^{op}$ and $A$, is a Hopf algebra
with the following structures:

$\bullet$ the multiplication $\widehat{m}$ is given by
\begin{eqnarray*}
(p \o a)(q \o b) := \sum p \ast^{op} e^i \o a_\varphi b q({e_i}^\varphi)= \sum e^i \ast p \o a_\varphi b q({e_i}^\varphi),
\end{eqnarray*}
where $a,b \in A$, $p,q \in {C^{\ast}}^{op}$, $e_i$ and $e^i$  are dual bases of $C$ and $C^\ast$;

$\bullet$ the unit is $\widehat{\eta}(1_k)=\varepsilon_C \o 1_A$;

$\bullet$ the comultiplication is given by
$$\widehat{\Delta}(p\o a) := (p_1 \o a_1) \o (p_2 \o a_2);$$

$\bullet$ the counit is given by
$$\widehat{\varepsilon}(p \o a) := p(1_C)\varepsilon_A(a);$$

$\bullet$ the antipode is given by
$$\widehat{S}(p \o a):= \sum p(S^{-1}_C({e_i}^\varphi)) e^i \o {S_A(a)}_\varphi.$$
\end{definition}

\begin{proof}
Firstly, since Lemma 6.2, ${C^{\ast}}^{op} \o A$ is an algebra.

Next we will show ${C^{\ast}}^{op} \o A$ is a bialgebra.
Obviously, ${C^{\ast}}^{op} \o A$ is a coalgebra under the given comultiplication. We only need check that $\widehat{\D}$ and $\widehat{\v}$ are algebra maps.

For $p,q \in {C^{\ast}}^{op}$, $a,b\in A$, we compute
\begin{eqnarray*}
&&~~~\widehat{\D}(p \o a)\widehat{\D}(q \o b)\\
&&=((p_1 \o a_1) \o (p_2 \o a_2))((q_1 \o b_2) \o (q_1 \o b_2))\\
&&=\sum p_1 \ast^{op} e^i \o a_{1\varphi}b_1 q_1({e_i}^\varphi) \o p_2 \ast^{op} o^i \o a_{2\psi}b_2 q_2({o_i}^\psi).
\end{eqnarray*}
Thus for any $c,d \in C$, we have
\begin{eqnarray*}
&&~~~\sum (p_1 \ast^{op} e^i) (c) \o a_{1\varphi}b_1 q_1({e_i}^\varphi) \o (p_2 \ast^{op} o^i)(d) \o a_{2\psi}b_2 q_2({o_i}^\psi)\\
&&=\sum p(c_2d_2)\o a_{1\varphi}b_1 \o q({c_1}^\varphi {d_1}^\psi) \o a_{2\psi}b_2 .
\end{eqnarray*}
Also we have
\begin{eqnarray*}
\widehat{\D}( (p \o a) (q \o b))&=& \widehat{\D} ( \sum p \ast^{op} e^i \o a_\varphi b q({e_i}^\varphi))\\
&=&\sum p_1 \ast^{op} {e^i}_1 \o a_{\varphi1}b_1 \o p_2 \ast^{op} {e^i}_2 \o a_{\varphi2}b_2 q({e_i}^\varphi),
\end{eqnarray*}
Then for $c,d \in C$, we obtain
\begin{eqnarray*}
&&\sum (p_1 \ast^{op} {e^i}_1)(c) \o a_{\varphi1}b_1 \o (p_2 \ast^{op} {e^i}_2)(d) \o a_{\varphi2}b_2 q({e_i}^\varphi)\\
&=& \sum p(c_2 d_2) \o a_{\varphi1}b_1 \o  q({(c_1 d_1)}^\varphi) \o a_{\varphi2}b_2 \\
&\stackrel {(E5)}{=}& \sum p(c_2d_2)\o a_{1\varphi}b_1 \o q({c_1}^\varphi {d_1}^\psi) \o a_{2\psi}b_2,
\end{eqnarray*}
which implies $\widehat{\D}((p \o a)(q \o b)) = \widehat{\D}(p \o a)\widehat{\D}(q \o b)$ .

Since  $\widehat{\v}$  preserves multiplication, ${C^{\ast}}^{op} \o A=({C^{\ast}}^{op} \o A, \widehat{m},\v_C \o 1_A,\widehat{\D},\widehat{\v})$ is a bialgebra.

In order to prove $\widehat{S}$ is the antipode of ${C^{\ast}}^{op} \o A$, for one thing, we compute
\begin{eqnarray*}
&&~~~\widehat{S}((p \o a)_1)(p \o a)_2\\
&&=\sum p_1(S_C^{-1}({e_i}^\varphi)) (e^i \o {S_A(a_1)}_\varphi) (p_2 \o a_2)\\
&&=\sum p(S_C^{-1}({e_i}^\varphi) {o_i}^\psi) e^i \ast^{op} o^i \o {S_A(a_1)}_{\varphi\psi} a_2.
\end{eqnarray*}
For any $c \in C$, we have
\begin{eqnarray*}
&&\sum p(S_C^{-1}({e_i}^\varphi) {o_i}^\psi) (e^i \ast^{op} o^i)(c) \o {S_A(a_1)}_{\varphi\psi} a_2\\
&=&\sum p(S_C^{-1}({c_2}^\varphi) {c_1}^\psi) \o {S_A(a_1)}_{\varphi\psi} a_2\\
&\stackrel {(E2)}{=}&\sum p(S_C^{-1}({c^\varphi}_2) {c^\varphi}_1) \o {S_A(a_1)}_{\varphi} a_2\\
&=& p(1_C)\v_C(c)\o S_A(a_1) a_2 = p(1_C)\v_C(c)\o \v_A(a)1_A.
\end{eqnarray*}
Thus $\widehat{S} \ast id = \widehat{\eta} \widehat{\v}$. Similarly, one can show that $id \ast \widehat{S}= \widehat{\eta} \widehat{\v}$. Hence $({C^{\ast}}^{op} \o A,\widehat{S})$ is a Hopf algebra.
\end{proof}

Next we will present the proof of Eq.(3.2).

\begin{proof}
For any $a \in A$, $c \in C$, $p \in C^{\ast}$, $\g \in A^\ast$, since $\widehat{S}$ is the antipode of ${C^{\ast}}^{op} \o A$, we have
\begin{eqnarray*}
\widehat{S}((\v_c\o a) (p \o 1_A)) = \widehat{S}(p \o 1_A)\widehat{S}(\v_c\o a).
\end{eqnarray*}

For one thing, we compute
\begin{eqnarray*}
\widehat{S}((\v_c\o a) (p \o 1_A)) &=& \widehat{S} (e^i \o a_\varphi) p({e_i}^\varphi) \\
&=& \sum p({S^{-1}_C({o_i}^\psi)}^\varphi)  o_i \o {S_A(a_\varphi)}_\psi,
\end{eqnarray*}
where $e_i$ ($o_i$) and $e^i$ ($o^i$) are dual bases of $C$ and $C^\ast$ respectively.

For another, we have
\begin{eqnarray*}
\widehat{S}(p \o 1_A)\widehat{S}(\v_c\o a) &=&(p(S^{-1}_C(e_i))e^i \o 1_A ) ( \v_C \o S_A(a) ) \\
&=& \sum p(S^{-1}_C(e_i)) e^i \o S_A(a).
\end{eqnarray*}
Thus $\sum p({S^{-1}_C({o_i}^\psi)}^\varphi)  o_i \o {S_A(a_\varphi)}_\psi = \sum p(S^{-1}_C(e_i)) e^i \o S_A(a)$.
Indeed, we can easily get
\begin{eqnarray*}
\sum p({S^{-1}_C({o_i}^\psi)}^\varphi)  o_i(c) \o \g({S_A(a_\varphi)}_\psi) = \sum p(S^{-1}_C(e_i)) e^i(c) \o \g(S_A(a)),
\end{eqnarray*}
i.e.
\begin{eqnarray*}
\sum p({S^{-1}_C({c}^\psi)}^\varphi) \o \g({S_A(a_\varphi)}_\psi) = \sum p(S^{-1}_C(c)) \o \g(S_A(a)),
\end{eqnarray*}
which implies Eq.(3.2).
\end{proof}

Be similar with [\cite{sgz1}, Theorem 9], we have the following property.

\begin{proposition} \label{etD}
The category of entwined modules $\mathcal {C}^C_A(\varphi)$ is monoidal isomorphic to the representation category of ${C^{\ast}}^{op} \o A$.
\end{proposition}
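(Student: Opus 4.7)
The plan is to exhibit a pair of functors between $\mathcal{C}^C_A(\varphi)$ and the category of right ${C^{\ast}}^{op}\otimes A$-modules, show they are mutually inverse on objects and morphisms, and then check that they transport the monoidal structures into one another. Since everything is finite dimensional, a right $C$-comodule is the same as a right $C^{\ast op}$-module via $m\cdot p := \sum p(m_{(1)})\, m_{(0)}$, and the inverse passage $\rho^M(m)=\sum m\cdot (e^i\otimes 1_A)\otimes e_i$ recovers the coaction from the $C^{\ast op}$-part of a smash-product action. Combining this with the right $A$-action suggests defining, for any entwined module $(M,\varrho_M,\rho^M)$, the action
\[
m\cdot (p\otimes a) := \sum p(m_{(1)})\, m_{(0)}\cdot a.
\]

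The first substantive step is to verify that this formula actually gives a right ${C^{\ast}}^{op}\otimes A$-module. Expanding both $m\cdot\bigl((p\otimes a)(q\otimes b)\bigr)$ using the smash-product multiplication in Definition~\ref{6.1} and $(m\cdot (p\otimes a))\cdot (q\otimes b)$ directly, the two sides reduce (after collapsing the dual bases $e_i,e^i$) to
\[
\sum q(m_{(1)}^{\varphi})\, p(m_{(2)})\, m_{(0)}\cdot (a_{\varphi}b),
\]
where the twist $\varphi$ appears precisely because of axiom (E0) applied to $\rho^M(m_{(0)}\cdot a)$. Thus (E0) is exactly what is needed for associativity, and the unit relation is immediate since $(\varepsilon_C\otimes 1_A)$ acts as the identity. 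Conversely, starting from a right ${C^{\ast}}^{op}\otimes A$-module and splitting the action along the two subalgebras ${C^{\ast}}^{op}\otimes 1_A$ and $\varepsilon_C\otimes A$, the identity $(\varepsilon_C\otimes a)(p\otimes 1_A)=\sum e^i\otimes a_\varphi\, p(e_i^{\varphi})$ in the smash product translates, via the Lemma~\ref{6.0} correspondence, into condition (E0) for the resulting coaction and action; this gives the inverse functor, and morphism-compatibility reduces to the standard fact that a $k$-linear map is $C$-colinear iff it is $C^{\ast op}$-linear.

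Second, for monoidality it suffices to check that the two tensor products coincide on objects. Using the formula $(m\otimes n)\cdot(p\otimes a)=m\cdot(p_1\otimes a_1)\otimes n\cdot(p_2\otimes a_2)$ induced by $\widehat{\Delta}$, one computes
\[
m\cdot(p_1\otimes a_1)\otimes n\cdot(p_2\otimes a_2)
= \sum p(m_{(1)}n_{(1)})\, m_{(0)}\cdot a_1\otimes n_{(0)}\cdot a_2,
\]
which is exactly the action of $(p\otimes a)$ on $M\otimes N$ equipped with the entwined-module tensor structure recalled in Section~2.2. The tensor unit also matches: the trivial action of ${C^{\ast}}^{op}\otimes A$ on $k$ induced by $\widehat{\varepsilon}(p\otimes a)=p(1_C)\varepsilon_A(a)$ agrees with $(k,id_k\otimes\varepsilon_A,id_k\otimes\eta_C)$. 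Hence both functors are strict monoidal.

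The proof is thus a verification in the spirit of [\cite{sgz1}, Theorem~9], and the main obstacle is purely bookkeeping: keeping track of the interaction between dual-bases sums over $C$, the twist $\varphi$ appearing inside the smash-product multiplication, and iterated Sweedler notation for the coaction. The crucial conceptual point — and the only place where the monoidal entwining hypothesis (E5), (E6) enters — is the compatibility check for $\widehat{\Delta}$ acting on $M\otimes N$, which is why the same data $(C,A,\varphi)$ controls both the monoidal structure on $\mathcal{C}^C_A(\varphi)$ and the bialgebra structure on ${C^{\ast}}^{op}\otimes A$.
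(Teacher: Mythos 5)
Your proposal is correct and takes essentially the same route as the paper's own proof: the same action $m\cdot(p\otimes a)=\sum p(m_{(1)})\,m_{(0)}\cdot a$, the same inverse functor obtained by restricting along the subalgebras $\varepsilon_C\otimes A$ and ${C^{\ast}}^{op}\otimes 1_A$ (with the coaction recovered from dual bases), and the same use of (E0) as exactly the associativity/compatibility condition. The only difference is cosmetic: you spell out the strict monoidality computation with $\widehat{\Delta}$ on $M\otimes N$, which the paper dismisses with ``Obviously $\Gamma$ is a strict monoidal functor.''
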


\begin{proof}
For any object $(M,\theta_M,\rho^M)$, and morphism $\lambda:M\rightarrow N$ in $\mathcal {C}^C_A(\varphi)$, one can define a functor $\Gamma$ from $\mathcal {C}^C_A(\varphi)$ to the category of right ${C^{\ast}}^{op} \o A$-modules via
\begin{eqnarray*}
\Gamma(M):=M\mbox{~as~}k\mbox{-modules~},~~~~\Gamma(f):=f,
\end{eqnarray*}
where the ${C^{\ast}}^{op} \o A$-module structure on $M$ is given by
\begin{eqnarray*}
m \leftharpoonup (p\o a):= p(m_1)m_0 \c a, ~~~~\mbox{for~all~}m \in M,~p\in C^{\ast},~a \in A.
\end{eqnarray*}

First of all, we claim that $\Gamma$ is well-defined.
In fact,for any $m \in M$, $p,q\in C^{\ast}$, $a,b \in A$, we have
\begin{eqnarray*}
m\leftharpoonup (\v_C \o 1_A)=\v_C(m_1)m_0 \c 1_A = m.
\end{eqnarray*}

Also, we can get
\begin{eqnarray*}
(m\leftharpoonup (p \o a))\leftharpoonup (q\o b)&=&p(m_1)(m_0 \c a)\leftharpoonup (q\o b) \\
&=& \sum p(m_2)e^i(m_1) m_0 \c a_\varphi b q({e_i}^\varphi) \\
&=& m\leftharpoonup (p \o a)(q \o b),
\end{eqnarray*}
where $e_i$ and $e^i$ are dual bases of $C$ and $C^\ast$ respectively.
Hence $(M,\leftharpoonup)$ is a right ${C^{\ast}}^{op} \o A$-module.

For the morphism $\lambda:M\rightarrow N$, it is a direct computation to check $\Gamma(\lambda)$ is ${C^{\ast}}^{op} \o A$-linear. Thus $\Gamma$ is well-defined.

Conversely, we define the functor $\Lambda$ from the representation category of ${C^{\ast}}^{op} \o A$ to $\mathcal {C}^C_A(\varphi)$ by
\begin{eqnarray*}
\Lambda(U):=U\mbox{~as~}k\mbox{-modules~},~~~~\Lambda(\lambda):=\lambda,
\end{eqnarray*}
where $(U, \leftharpoonup)$ is a right ${C^{\ast}}^{op} \o A$-module, $\lambda:U\rightarrow V$ is a morphism of ${C^{\ast}}^{op} \o A$-modules.
Further, the $A$-action on $U$ is defined by
\begin{eqnarray*}
u \c a:= u\leftharpoonup (\v_C \o a), ~~~~\mbox{for~any~}u\in U,~~a\in A,
\end{eqnarray*}
and the $C$-coaction on $U$ is given by
\begin{eqnarray*}
\rho^U(u)=u_{0} \o u_{1}:= \sum (u\leftharpoonup (e^i \o 1_A)) \o e_i.
\end{eqnarray*}

Next we will show that $\Lambda$ is well defined. It is straightforward to show $(U, \c)$ is an $A$-module and $(U,\rho^U)$ is a $C$-comodule. We only check $U$ satisfies Diagram (E0).

Since for any $a \in A$, we have
\begin{eqnarray*}
\rho^U(u \c a) &=& \sum ( u\c a\leftharpoonup (e^i \o 1_A)) \o e_i \\
&=& \sum ( u\leftharpoonup (e^i({o_i}^\varphi)o^i \o a_\varphi)) \o e_i \\
&=& \sum ( u\leftharpoonup (e^i \o 1_A)(\v_C \o a_\varphi)) \o {e_i}^\varphi \\
&=& \sum u_0 \c a_\varphi \o {u_1}^\varphi,
\end{eqnarray*}
hence $U \in \mathcal {C}^C_A(\varphi)$.

Since $\Lambda(\lambda):U\rightarrow V$  are both $A$-linear and $C$-colinear, $\Lambda$ is well-defined, as desired.

Obviously $\Gamma$ is a strict monoidal functor, and $\Lambda$ is the inverse of $\Gamma$. This completes the proof.
\end{proof}

\begin{remark}
Be similar with Lemma 6.2 and Proposition \ref{etD}, for any finite dimensional $k$-algebras $A$ and $B$, if $\Phi:B \o A\rightarrow A \o B$, $b \o a\mapsto \sum a^\Phi \o b_\Phi$, is an algebra distributive law, then there is an entwining map $\varphi: A^{\ast cop} \o B \rightarrow B \o A^{\ast cop}$, defined by
$$
\varphi(\gamma \o b):= \sum \gamma({e_i}^\Phi) b_\Phi \o e^i, ~~\mbox{where~}b \in B,~~\gamma \in A^\ast,~~e_i\mbox{~and~}e^i \mbox{~are~dual~bases~of~}A\mbox{~and~}A^\ast.
$$

Conversely, if there an entwining map $\varphi: A^{\ast cop} \o B \rightarrow B \o A^{\ast cop}$, $\gamma \o b \mapsto \sum b_\varphi \o \gamma^\varphi$, then one can define an algebra distributive law $\Phi:B \o A\rightarrow A \o B$ via
$$
\Phi(b \o a):= \sum {e^i}^\varphi(a) e_i \o b_\varphi.
$$

Moreover, the category of $A \o B$-modules is identified to $\mathcal{C}^{A^{\ast cop}}_B(\varphi)$, the category of entwined modules.
\end{remark}

\begin{example}
(1). If we define $\varphi:H \o H\rightarrow H \o H$ by $\varphi(x \o y) = y_1 \o xy_2$, where $x, y \in H$ and $H$ is a finite dimensional Hopf algebra over $k$, then the category $\mathcal {C}^H_H(\varphi)$ is the category of Hopf modules. Recall from Lemma 4.2 and Theorem \ref{etD}, if we define the following multiplication on $H^{\ast op} \o H$
$$
(\delta \o a)(\gamma \o b):= \gamma(?a_2)\delta \o a_1b,
$$
then $H^{\ast op} \o H$ is an associative algebra, and the category of Hopf modules of $H$ is identified to the category of $H^{\ast op} \o H$-modules.

(2). Let $H$ be a finite dimensional Hopf algebra and $A$ a finite dimensional left $H$-module algebra. Recall that the multiplication on $A\sharp H$, the usual smash product of $A$ and $H$ is
$$
(a\sharp x)(b \sharp y)= a(x_1 \c b) \sharp x_2 y, ~~\mbox{where~}a,b \in A,~~x,y \in H,
$$
which implies there is an algebra distributive law $\Phi:H \o A\rightarrow A \o H$, $\Phi(h \o a)= h_1 \c a \o h_2$.
Thus from Remark 4.6, there exists an entwining map $\varphi:A^{\ast cop} \o H \rightarrow H \o A^{\ast cop}$, defined by
$$
\varphi(\gamma \o h):= \sum \gamma(h_1 \c e_i)h_2 \o e^i, ~~\mbox{where~}h \in H,~~\gamma \in A^\ast,~~e_i\mbox{~and~}e^i \mbox{~are~dual~bases~of~}A\mbox{~and~}A^\ast.
$$
Furthermore, the representations of $A\sharp H$ is isomorphic to the category $\mathcal{C}^{A^{\ast cop}}_H(\varphi)$.

\end{example}

\begin{theorem}
There exists a $k$-linear map $\mathfrak{g}:C\rightarrow A$ such that
 $(C,A,\varphi,\mathfrak{g})$ is a pivotal entwined datum if and only if ${C^{\ast}}^{op} \o A$ is a pivotal Hopf algebra.
\end{theorem}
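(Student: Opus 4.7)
The plan is to obtain the stated equivalence by chaining Theorem \ref{4.6} with Proposition \ref{etD} and then re-applying Theorem \ref{4.6} at the trivial entwining.

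First, by Theorem \ref{4.6}, the existence of a $k$-linear map $\mathfrak{g}:C\rightarrow A$ making $(C,A,\vp,\mathfrak{g})$ a pivotal entwined datum is equivalent to $\mathcal{C}^C_A(\vp)$ being a pivotal category, the pivotal structure being given explicitly by $\b_M(m)(\m)=\m(m_0\c\mathfrak{g}(m_1))$. This is precisely the content of our first main theorem, so the direction ``pivotal datum $\Rightarrow$ pivotal category'' is already in hand, as is the reverse recovery of $\mathfrak{g}$ from any pivotal structure via the bijection of Proposition \ref{4.1}.

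Second, Proposition \ref{etD} provides a strict monoidal isomorphism between $\mathcal{C}^C_A(\vp)$ and the representation category of the Hopf algebra ${C^{\ast}}^{op}\o A$ built in Definition \ref{6.1}. Since a strict monoidal isomorphism transports rigid structures and monoidal natural isomorphisms, $\mathcal{C}^C_A(\vp)$ is pivotal if and only if the representation category of ${C^{\ast}}^{op}\o A$ is pivotal. Third, I apply Theorem \ref{4.6} once more to the trivial monoidal entwining datum $(k,{C^{\ast}}^{op}\o A,id)$, which is exactly the situation spelled out in Example 4.8: the representation category of any Hopf algebra $H$ is pivotal if and only if $H$ admits a pivot. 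Applied to $H={C^{\ast}}^{op}\o A$, this says ``pivotal representation category'' is equivalent to ``pivotal Hopf algebra''. Chaining the three equivalences closes the argument.

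The main obstacle---though only bookkeeping---is to make the correspondence completely explicit. Using dual bases $\{e_i\},\{e^i\}$ of $C$ and $C^\ast$, I expect the pivot in ${C^{\ast}}^{op}\o A$ attached to an entwined pivotal morphism $\mathfrak{g}$ to be $G:=\sum_i e^i\o\mathfrak{g}(e_i)$, and conversely the entwined pivotal morphism attached to a pivot $\sum_i p_i\o a_i$ to be $c\mapsto\sum_i p_i(c)a_i$; this is exactly what the formula $m\leftharpoonup(p\o a)=p(m_1)m_0\c a$ from the proof of Proposition \ref{etD} forces. The axioms (4.1)--(4.4) should then translate, via the twisted multiplication and antipode of Definition \ref{6.1}, into $G$ being a group-like element with $\widehat{\v}(G)=1_k$ and implementing $\widehat{S}^2$ by conjugation. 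The only delicate step is tracking the occurrences of $\vp$ inside $\widehat{m}$ and $\widehat{S}$ when verifying the conjugation identity; this is precisely where Eq.(3.2) together with Eqs.(4.3)--(4.4) are needed, but the monoidal isomorphism of Proposition \ref{etD} lets me avoid doing this computation by hand and simply invoke Theorem \ref{4.6} twice.
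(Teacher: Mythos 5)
Your proposal is correct and is essentially the paper's own argument: the paper's proof consists precisely of the explicit correspondence you derive --- the pivot $\sum_i e^i \o \mathfrak{g}(e_i)$ in one direction and $c\mapsto \sum T^{(1)}(c)T^{(2)}$ in the other --- with the verification left implicit in exactly the chain you spell out (Theorem \ref{4.6}, transport along the strict monoidal isomorphism of Proposition \ref{etD}, and Theorem \ref{4.6} again at the trivial entwining as in the example with $C=k$, $\varphi=id_A$). The only hypothesis worth flagging when you re-apply Theorem \ref{4.6} to $H={C^{\ast}}^{op}\o A$ is bijectivity of the antipode $\widehat{S}$, which is automatic here since the paper works throughout with finite dimensional objects.
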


\begin{proof}
$\Rightarrow$:
If $(C,A,\varphi,\mathfrak{g})$ is a pivotal entwined datum, then the pivot element in ${C^{\ast}}^{op} \o A$ is $\sum e^i \o \mathfrak{g}(e_i)$, where $e_i$ and $e^i$ are dual bases of $C$ and $C^\ast$ respectively.

$\Leftarrow$:
Conversely, if ${C^{\ast}}^{op} \o A$ is a pivotal Hopf algebra with the pivot $T=\sum T^{(1)} \o T^{(2)} \in {C^{\ast}}^{op} \o A$, then the entwined pivotal morphism of $\mathcal {C}^C_A(\varphi)$ is $c\mapsto \sum T^{(1)}(c) T^{(2)}$.
\end{proof}

\begin{theorem}
Suppose that $(C,A,\vp,R)$ is a double quantum group where $R$ is a map from $C \o C$ to $A \o A$. Then there exists a $k$-linear map $g:C\rightarrow A$ such that
 $(C,A,\varphi,R,g)$ is a ribbon entwined datum if and only if ${C^{\ast}}^{op} \o A$ is a ribbon Hopf algebra.
\end{theorem}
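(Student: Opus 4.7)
The plan is to reduce the statement to the classical correspondence between ribbon elements in a quasi-triangular Hopf algebra and ribbon structures on its representation category, transported through Proposition \ref{etD}. By Theorem \ref{5.9}, a ribbon entwined morphism $g:C\to A$ exists if and only if $\mathcal{C}^C_A(\varphi)$ is a ribbon category, so it suffices to prove this is equivalent to $H:={C^{\ast}}^{op}\o A$ being a ribbon Hopf algebra.

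First I would verify that the double quantum group structure $(C,A,\varphi,R)$ turns $H$ into a quasi-triangular Hopf algebra whose universal $R$-matrix $\mathcal{R}\in H\o H$ is obtained from $R:C\o C\to A\o A$ by contraction against dual bases of $C$, and that, via the monoidal isomorphism of Proposition \ref{etD}, the braiding on the category of right $H$-modules induced by $\mathcal{R}$ coincides with the braiding $\mathbf{C}$ in $\mathcal {C}^C_A(\varphi)$ given in Eq.(2.1). The axioms (E7)--(E10) for $R$ are exactly the translations of the quasi-triangular axioms for $\mathcal{R}$ under the pairing; this step is parallel to Lemma \ref{6.0} and Proposition \ref{etD}, but for the braided structure.

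Next I would set up the explicit correspondence in both directions. Given $g:C\to A$, set
$$\nu_g := \sum_i e^i\o g(e_i)\in H,$$
with $\{e_i\},\{e^i\}$ dual bases of $C$ and $C^\ast$; conversely, for $\nu = \sum \nu^{(1)}\o\nu^{(2)}\in H$, set $g_\nu(c):=\sum \nu^{(1)}(c)\,\nu^{(2)}$. These assignments are mutually inverse by the usual dual-basis identity, and together with Proposition \ref{5.1} they match the bijection $g\leftrightarrow\Sigma(g)$ with the action of $\nu_g$ on $H$-modules. I would then show, one condition at a time, that the four defining properties of a ribbon element of $H$ translate into conditions (5.1)--(5.4) of Definition \ref{5.8}: invertibility of $\nu$ in $H$ corresponds to entwined-convolution invertibility of $g$; centrality of $\nu$ splits into (5.1) and (5.2), obtained by pairing $\nu$ against elements of the form $\v_C\o a$ and $p\o 1_A$; the identity $\widehat{S}(\nu)=\nu$ corresponds to the self-duality condition (5.4), using the antipode formula of Definition \ref{6.1} together with Lemma \ref{3.0}; and the ribbon identity $\widehat{\Delta}(\nu)=(\nu\o\nu)\,\mathcal{R}_{21}\mathcal{R}$ corresponds to the twist condition (5.3).

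The main obstacle is this last translation. Both sides of $\widehat{\Delta}(\nu_g)=(\nu_g\o\nu_g)\,\mathcal{R}_{21}\mathcal{R}$ must be unwound through the entwined multiplication on $H$ from Definition \ref{6.1} and the explicit form of $\mathcal{R}$ obtained in the first step. This is a bookkeeping exercise in the spirit of Lemma \ref{5.6}: one inserts the dual-basis resolutions of $g$, applies (E1)--(E2) and (E7)--(E10) to redistribute the entwining subscripts and the $R$-matrix factors, and verifies that after collapsing all dual-basis pairings the resulting identity matches (5.3) term by term. Once this calculation is carried out, the forward direction produces the ribbon element $\nu_g$ from an entwined ribbon morphism $g$, and the reverse direction recovers $g_\nu$ from a ribbon element $\nu$, completing the proof.
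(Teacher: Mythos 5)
Your proposal is correct and takes essentially the same route as the paper: the paper likewise works through Theorem \ref{5.9} and the monoidal isomorphism of Proposition \ref{etD}, exhibits the ribbon element $\sum_i e^i \o g(e_i)$ of ${C^{\ast}}^{op} \o A$ (with the $R$-matrix obtained from $R$ by dual-basis contraction) in the forward direction, and recovers $g(c)=\sum L^{(1)}(c)L^{(2)}$ from a ribbon element $L$ in the converse. If anything, your plan spells out the axiom-by-axiom verifications (invertibility, centrality versus Eqs.~(5.1)--(5.2), $\widehat{S}$-invariance versus Eq.~(5.4), and the ribbon identity versus Eq.~(5.3)) that the paper's two-line proof leaves implicit.
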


\begin{proof}
$\Rightarrow$:
If $(C,A,\varphi,R,g)$ is a ribbon entwined datum, then the $R$-matrix of ${C^{\ast}}^{op} \o A$ is $\sum c^i \o R^{(2)}(c_i \o e_i) \o e^i \o R^{(1)}(c_i \o e_i)$, where $e_i$ and $e^i$, $c_i$ and $c^i$ are all dual bases of $C$ and $C^\ast$ respectively. Further, the ribbon element in ${C^{\ast}}^{op} \o A$ is $\sum e^i \o g(e_i)$, where $e_i$ and $e^i$ are dual bases of $C$ and $C^\ast$ respectively.

$\Leftarrow$:
Conversely, if ${C^{\ast}}^{op} \o A$ is a ribbon Hopf algebra with the ribbon element $L=\sum L^{(1)} \o L^{(2)} \in {C^{\ast}}^{op} \o A$, then the entwined ribbon morphism of $\mathcal {C}^C_A(\varphi)$ is $c\mapsto \sum L^{(1)}(c) L^{(2)}$.
\end{proof}

\vskip 0.5cm
\subsection{\textbf{The dual case}}
\def\theequation{4.2.\arabic{equation}}
\setcounter{equation} {0} \hskip\parindent
\vskip 0.5cm

The definition and results in this section are dual to the corresponding results in Section 4.1, so we will not give the complete proof.

\begin{definition}
Let $C$, $D$ be coalgebras in a monoidal category $\mathcal{C}$. A morphism $\Psi:C\o D\rightarrow D \o C$ in $\mathcal{C}$ is called a \emph{coalgebra distributive law} if $\Psi$ satisfying
$$\aligned
\xymatrix
{
C \o D \ar[d]_-{id_C \o \Delta_D}\ar"1,3"^-{\Psi} & & D \o C \ar[d]^-{\Delta_D \o id_C}\\
C \o D \o D \ar[r]_-{\Psi \o id_D} & D\o C \o D \ar[r]_-{id_D \o \Psi} & D \o D \o C ,
}~~~
\xymatrix{
C \o D \ar[r]^-{\Psi} \ar[dr]_-{id_C \o \varepsilon_D} & D \o C \ar[d]^{\varepsilon_D \o id_C}  \\
 & C , }
\endaligned$$
$$\aligned
\xymatrix
{
C \o D \ar[d]_-{\Delta_C \o id_D}\ar"1,3"^-{\Psi} & & D \o C \ar[d]^-{id_D \o \Delta_C}\\
C \o C \o D \ar[r]_-{id_C \o \Psi } & C\o D \o C \ar[r]_-{\Psi \o id_C } & D \o C \o C ,
}~~~
\xymatrix{
C \o D \ar[r]^-{\Psi} \ar[dr]_-{\varepsilon_C \o id_D} & D \o C \ar[d]^{id_D \o\varepsilon_C}  \\
 & D . }
\endaligned$$
\end{definition}

Be similar with [\cite{sgz1}, Theorem 12], we have the following property.

\begin{lemma}
Let $C$ be a finite dimensional coalgebra and $A$ a finite dimensional algebra over $k$. Then give an entwining map $\varphi: C \o A\rightarrow A \o C$ is identified to give a coalgebra distributive law $\Psi : A^{\ast cop} \o C\rightarrow C \o A^{\ast cop}$.
\end{lemma}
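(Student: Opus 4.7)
My plan is to mirror the proof of Lemma 6.2, now dualizing the algebra $A$ rather than the coalgebra $C$. Fix dual bases $\{f_i\}$ of $A$ and $\{f^i\}$ of $A^{\ast}$. Starting from an entwining map $\varphi(c\o a)=\sum a_\varphi\o c^\varphi$, define
\[
\Psi:A^{\ast cop}\o C\longrightarrow C\o A^{\ast cop},\qquad \gamma\o c\longmapsto \sum_i \gamma((f_i)_\varphi)\,c^\varphi\o f^i.
\]
Conversely, given $\Psi$, reconstruct $\varphi$ by
\[
\varphi:C\o A\longrightarrow A\o C,\qquad c\o a\longmapsto \sum_i f_i\o (id_C\o ev_A)\bigl(\Psi(f^i\o c)\o a\bigr).
\]
A direct application of the completeness relation $\sum_i f^i(a)f_i = a$ shows that these two assignments are mutually inverse, so the proof reduces to matching up axioms.

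What remains is to verify that $\Psi$ is a coalgebra distributive law precisely when $\varphi$ satisfies the entwining axioms (E1)--(E4). The correspondence runs as follows. The $\Delta_C$-pentagon for $\Psi$ translates into (E2), and the $\varepsilon_C$-triangle translates into (E4). Using the identifications $\Delta_{A^{\ast cop}}(\gamma)(a\o b)=\gamma(ba)$ and $\varepsilon_{A^{\ast cop}}(\gamma)=\gamma(1_A)$, the $\Delta_{A^{\ast cop}}$-pentagon for $\Psi$ becomes (E1), while the $\varepsilon_{A^{\ast cop}}$-triangle becomes (E3). Each of these four equivalences is a routine manipulation with dual bases, entirely parallel to the proof of Lemma 6.2, and so I would write out only one of them (say the pentagon $\leftrightarrow$ (E1)) and indicate that the others go the same way.

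The main obstacle, and the reason the ``cop'' appears, is keeping the two Greek subscripts in (E1) in the correct order. With the ordinary comultiplication on $A^{\ast}$ coming from $m_A$, the distributive-law pentagon would produce $a_\varphi b_\psi$ paired with $c^{\psi\varphi}$ rather than $c^{\varphi\psi}$. Passing to $A^{\ast cop}$ flips the two tensor factors of the comultiplication and restores exactly the order demanded by (E1). Beyond this bookkeeping no essentially new calculation is needed, which is presumably why the author declines to reproduce the full argument.
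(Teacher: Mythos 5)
Your proposal is correct and follows essentially the same route as the paper, which omits the proof of this lemma by deferring to the dual of Lemma \ref{6.0} (and [\cite{sgz1}, Theorem 12]): your formula $\Psi(\gamma \o c)=\sum_i \gamma((f_i)_\varphi)\,c^\varphi \o f^i$ and its inverse are precisely the dualization of that proof, and they agree with the distributive law implicitly used in the paper's entwined smash coproduct (the formula for $\overline{\Delta}$ in Definition \ref{5.01}). Your axiom matching, (E1) with the $\Delta_{A^{\ast cop}}$-compatibility, (E2) with the $\Delta_C$-compatibility, (E3) and (E4) with the two counit triangles, checks out, as does your observation that without the ``cop'' the pentagon yields $a_\varphi b_\psi$ paired with $c^{\psi\varphi}$ instead of $c^{\varphi\psi}$.
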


Now supposed that $(C,A, \varphi)$ is a monoidal entwining datum over $k$ where $C, A$ are two Hopf algebras with bijective antipodes.

\begin{definition} \label{5.01}
The \emph{entwined smash coproduct} ${A^{\ast}}^{cop}\o C $ of $(C,A,\varphi)$, in a form containing ${A^{\ast}}^{cop}$ and $C$, is a Hopf algebra
with the following structures:

$\bullet$ the multiplication $\overline{m}$ is given by
\begin{eqnarray*}
(\g \o c)(\delta \o d) := \g \ast \delta \o ab,
\end{eqnarray*}
where $c,d \in A$, $\g,\delta \in {A^{\ast}}^{cop}$;

$\bullet$ the unit is $\overline{\eta}(1_k)=\varepsilon_A \o 1_C$;

$\bullet$ the comultiplication is given by
$$\overline{\Delta}(\g\o c) := \sum (\g_1({e_i}_\varphi) \g_2 \o {c_1}^\varphi) \o (e^i \o c_2),$$
where $e_i$ and $e^i$  are dual bases of $A$ and $A^\ast$

$\bullet$ the counit is given by
$$\overline{\varepsilon}(\g \o c) := \g(1_A)\varepsilon_C(c);$$

$\bullet$ the antipode is given by
$$\overline{S}(\g \o c):= \sum \g({e_i}_\varphi) S_{A^\ast}^{-1}(e^i) \o S_C(c^\varphi).$$
\end{definition}

The following is the proof of Eq.(3.1).

\begin{proof}
For any $a \in A$, $c \in C$, $p \in C^{\ast}$, $\g \in A^\ast$, since $\overline{S}$ is the antipode of ${A^{\ast}}^{cop} \o C$, we have
\begin{eqnarray*}
(\overline{S} \o\overline{S}) (\overline{\Delta}^{cop}(\g \o c)) = \overline{\Delta}(\overline{S}(\g\o c)).
\end{eqnarray*}

For one thing, we compute
\begin{eqnarray*}
&&(\overline{S} \o\overline{S}) (\overline{\Delta}^{cop}(\g \o c)) \\
&=& \sum (\overline{S} \o\overline{S}) ((e^i(o_{i\phi}) S^{-1}_{A^\ast}(o^i) \o S_C({c_2}^{\phi})) \o (\g_1(e_{i\varphi}) \g_2(x_{i\psi}) S^{-1}_{A^\ast}(x^i) \o S_C({c_1}^{\varphi\psi})) )\\
&=& \sum \g(o_{i\phi\varphi} x_{i\psi}) S^{-1}_{A^\ast}(o^i) \o S_C({c_2}^{\phi})\o S^{-1}_{A^\ast}(x^i) \o S_C({c_1}^{\varphi\psi}),
\end{eqnarray*}
where $e_i$,$o_i$,$x_i$ and $e^i$,$o^i$,$x^i$ are all dual bases of $A$ and $A^\ast$ respectively.

For another, we have
\begin{eqnarray*}
\overline{\Delta}(\overline{S}(\g\o c))= \sum \g(e_{i\varphi}) S^{-1}_{A^\ast}({e^i}_2)(o_{i\psi}) S^{-1}_{A^\ast}({e^i}_1) \o {S_C({c^\varphi}_2)}^{\psi} \o o^i \o S_C({c^\varphi}_1).
\end{eqnarray*}

Indeed, since
\begin{eqnarray*}
&& (p \o \v_C) ( \sum \g(o_{i\phi\varphi} x_{i\psi}) S^{-1}_{A^\ast}(o^i)(1_A) \o S_C({c_2}^{\phi})\o S^{-1}_{A^\ast}(x^i)(a) \o S_C({c_1}^{\varphi\psi}) )\\
&=& \sum \g( {S^{-1}_{A}(a)}_{\psi}) p(S_C(c_2)) \v_C( S_C({c_1}^{\psi})) \\
&=& \g( S^{-1}_{A}(a)) p(S_C(c)),
\end{eqnarray*}
and
\begin{eqnarray*}
&& (p \o \v_C) ( \sum \g(e_{i\varphi}) S^{-1}_{A^\ast}({e^i}_2)(o_{i\psi}) S^{-1}_{A^\ast}({e^i}_1)(1_A) \o {S_C({c^\varphi}_2)}^{\psi} \o o^i(a) \o S_C({c^\varphi}_1) )\\
&=& (p \o \v_C) ( \sum \g(e_{i\varphi}) S^{-1}_{A^\ast}(e^i)(a_{\psi}) {S_C({c^\varphi}_2)}^{\psi} \o S_C({c^\varphi}_1) ) \\
&=& \sum \g({S^{-1}_A(a_{\psi} )}_{\varphi}) p({S_C({c^\varphi})}^{\psi}),
\end{eqnarray*}
Eq.(3.1) holds.
\end{proof}

Be similar with [\cite{sgz1}, Theorem 13], we have the following property.

\begin{proposition} \label{5.02}
$\mathcal {C}^C_A(\varphi)$ is monoidal isomorphic to the corepresentation category of ${A^{\ast}}^{cop}\o C$.
\end{proposition}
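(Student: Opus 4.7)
The plan is to mimic the argument for Proposition~\ref{etD} in dual form, constructing mutually inverse strict monoidal functors between $\mathcal{C}^C_A(\varphi)$ and the category $\mathcal{M}^{\overline{H}}$ of right comodules over $\overline{H}:={A^{\ast}}^{cop}\o C$ equipped with the entwined smash coproduct of Definition~\ref{5.01}. Throughout, finite-dimensionality of $A$ is used to interchange right $A$-module structures and right $A^{\ast cop}$-comodule structures.

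First, I would define $\Gamma:\mathcal{C}^C_A(\varphi)\ra\mathcal{M}^{\overline{H}}$ on objects by $\Gamma(M)=M$ as a $k$-space with coaction
\[
\rho(m):=\sum (m_0\c e_i)\o (e^i\o m_1),
\]
where $\{e_i\},\{e^i\}$ are dual bases of $A$ and $A^\ast$, and by $\Gamma(f)=f$ on morphisms. The coassociativity $(\rho\o\mathrm{id}_{\overline{H}})\rho=(\mathrm{id}_M\o\overline{\Delta})\rho$ is the core check: expanding the left-hand side produces $(m_0\c e_i)_0\o(m_0\c e_i)_1$, which must be rewritten via the entwining axiom $(E0)$ to introduce the $\varphi$-twist $(e_i)_\varphi\o {m_1}^\varphi$. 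That twist matches exactly the $\varphi$-twist appearing in $\overline{\Delta}$, and the counit axiom follows from $(E4)$.

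Second, I would define $\Lambda:\mathcal{M}^{\overline{H}}\ra\mathcal{C}^C_A(\varphi)$ by $\Lambda(N)=N$ on the underlying space, writing the coaction as $\rho(n)=\sum n_{(0)}\o n_{(-1)}\o n_{(1)}$ with $n_{(-1)}\in A^\ast$ and $n_{(1)}\in C$, and setting
\[
n\c a:=\sum n_{(0)}\, n_{(-1)}(a),\qquad \rho^N(n):=\sum n_{(0)}\,n_{(-1)}(1_A)\o n_{(1)}.
\]
The module axioms for $\c$ and the comodule axioms for $\rho^N$ follow from coassociativity and counitality of $\rho$ restricted to the two subcoalgebras $A^{\ast cop}\o k\cdot 1_C$ and $k\cdot\varepsilon_A\o C$ of $\overline{H}$. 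The entwining compatibility $(E0)$ for $\Lambda(N)$ is precisely the consequence of coassociativity applied using the mixed term of $\overline{\Delta}$ that carries the $\varphi$-twist; in other words, $(E0)$ and the $\overline{H}$-coaction axiom are two packagings of the same data.

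Finally, I would verify $\Gamma\circ\Lambda=\mathrm{id}$ and $\Lambda\circ\Gamma=\mathrm{id}$ by direct substitution (both functors act as identity on morphisms and underlying spaces), and check monoidality: for $M,N\in\mathcal{C}^C_A(\varphi)$, the coaction on $M\o N$ induced by $\Gamma$ together with $\overline{\Delta}$ equals $\sum(m_0\c a_{1}\o n_0\c a_{2})\cdots$, which after collecting the $A^\ast$- and $C$-components reproduces the tensor-product structure $(m\o n)\c a=m\c a_1\o n\c a_2$ and $(m\o n)_0\o(m\o n)_1=m_0\o n_0\o m_1n_1$ of $\mathcal{C}^C_A(\varphi)$. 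The main obstacle is the coassociativity check in the first step, since it is the only place where the $\varphi$-twist in $\overline{\Delta}$ must be matched against the twist forced by $(E0)$; once that is in place, all remaining verifications are routine and essentially dualize the proof of Proposition~\ref{etD}.
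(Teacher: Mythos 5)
Your overall route is the intended one: the paper gives no details for this proposition, simply deferring to duality with Proposition \ref{etD} (cf.\ [\cite{sgz1}, Theorem 13]), and your functors $\Gamma$ and $\Lambda$ are exactly the dual of that construction. Your coassociativity check for $\Gamma(M)$ --- matching the $\varphi$-twist in $\overline{\Delta}$ of Definition \ref{5.01} against the twist forced by (E0) --- is correct, as are the mutual-inverse and monoidality checks (a trivial remark: the counit axiom for $\Gamma(M)$ needs only $\sum_i e^i(1_A)e_i=1_A$, not (E4)). One formula needs repair: as written, $n\c a:=\sum n_{(0)}\,n_{(-1)}(a)$ is not well-formed, since the $C$-leg of the coaction is left dangling; it must read $n\c a:=\sum n_{(0)}\,n_{(-1)}(a)\,\varepsilon_C(n_{(1)})$.

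More substantively, your justification of the module and comodule axioms for $\Lambda(N)$ by restricting coassociativity ``to the two subcoalgebras $A^{\ast cop}\o k\,1_C$ and $k\,\varepsilon_A\o C$'' does not work, because these subspaces are \emph{not} subcoalgebras of $\overline{H}$ in general: by Definition \ref{5.01},
\begin{eqnarray*}
\overline{\Delta}(\g\o 1_C)=\sum\bigl(\g_1((e_i)_\varphi)\,\g_2\o(1_C)^\varphi\bigr)\o(e^i\o 1_C),
\end{eqnarray*}
and the leg $(1_C)^\varphi$ collapses to $1_C$ only under the extra hypothesis $\sum a_\varphi\o(1_C)^\varphi=a\o 1_C$, which is not among (E1)--(E6) and which the paper invokes only as an additional assumption in its Examples; dually, $k\,\varepsilon_A\o C$ is a subcoalgebra only when $\sum\varepsilon_A(a_\varphi)\,c^\varphi=\varepsilon_A(a)\,c$. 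The intermediate claims are nevertheless true, but must be derived from the full coassociativity identity for $\rho$: associativity of $\c$ uses (E4) to cancel $\varepsilon_C((n_{(1)1})^\varphi)$ against the $\varphi$-twisted $A$-leg; counitality and coassociativity of $\rho^N$ use (E3) in the form $\sum_i(e_i)_\varphi\, e^i(1_A)\o c^\varphi=1_A\o c$; and the compatibility (E0) for $\Lambda(N)$ then follows by the comparison you describe, both sides reducing to $\sum n_{(0)}\,n_{(-1)}(a_\varphi)\o(n_{(1)})^\varphi$. With these two corrections your argument is complete and agrees with the paper's intended (uncarried-out) proof.
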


\begin{theorem}
There exists a $k$-linear map $\mathfrak{g}:C\rightarrow A$ such that
 $(C,A,\varphi,\mathfrak{g})$ is a pivotal entwined datum if and only if ${A^{\ast}}^{cop}\o C$ is a copivotal Hopf algebra.
\end{theorem}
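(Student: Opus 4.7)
The plan is to construct an explicit bijection mirroring the pattern already established in Theorem 6.8 for the entwined smash product case. In the forward direction, given a pivotal entwined datum $(C, A, \varphi, \mathfrak{g})$, I would define the candidate copivot $\widetilde{\mathfrak{g}}: {A^{\ast}}^{cop}\o C \to k$ by
\[
\widetilde{\mathfrak{g}}(\gamma \o c) := \gamma(\mathfrak{g}(c)).
\]
In the reverse direction, using that $A$ is finite-dimensional, any $k$-linear character $\widetilde{\mathfrak{g}}$ on ${A^{\ast}}^{cop}\o C$ determines a map $\mathfrak{g}: C \to A$ via $\mathfrak{g}(c) := \sum_i \widetilde{\mathfrak{g}}(e^i \o c)\, e_i$, where $\{e_i\}$ and $\{e^i\}$ are dual bases of $A$ and $A^\ast$. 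These two assignments are manifestly inverse to one another, so the work lies in matching axioms.

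First I would check that $\widetilde{\mathfrak{g}}$ is an algebra character on $({A^{\ast}}^{cop}\o C, \overline{m}, \overline{\eta})$. Multiplicativity reduces to computing $(\gamma \ast \delta)(\mathfrak{g}(cd))$ and invoking (4.1) to rewrite $\Delta_A(\mathfrak{g}(cd)) = \mathfrak{g}(c) \o \mathfrak{g}(d)$, which splits it as $\gamma(\mathfrak{g}(c))\,\delta(\mathfrak{g}(d))$. Unitality follows from $\widetilde{\mathfrak{g}}(\varepsilon_A \o 1_C) = \varepsilon_A(\mathfrak{g}(1_C)) = 1_k$ via (4.2). Conversely, testing the character property on elements of the form $\gamma \o c$ and $\varepsilon_A \o c$ recovers (4.1) and (4.2) for $\mathfrak{g}$.

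Next I would verify the copivot relation $\widetilde{\mathfrak{g}}(X_1)\,\overline{S}(X_2) = \overline{S}^{-1}(X_1)\,\widetilde{\mathfrak{g}}(X_2)$ (as an identity in ${A^{\ast}}^{cop}\o C$) for $X = \gamma \o c$. Unpacking $\overline{\Delta}$ and $\overline{S}$ from Definition 6.11, both of which contain copies of the entwining map $\varphi$ together with the antipodes $S_A, S_C$, the identity should decouple into two halves: pairing against test elements of the form $\varepsilon_A \o d$ will extract the $C$-side compatibility (4.4), while pairing against $\delta \o 1_C$ will extract the $A$-side compatibility (4.3). The two dualizing lemmas, Lemma 3.0 identities (3.1) and (3.2), are precisely what allow $S_A^{\pm 1}$ and $S_C^{\pm 1}$ to migrate across $\varphi$ during this bookkeeping.

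The main obstacle is the third step: the copivot identity lives in the twisted smash coproduct, and both $\overline{\Delta}$ and $\overline{S}$ weave the entwining map with dual bases of $A$. Separating the contributions so that (4.3) emerges from one projection and (4.4) from another requires careful index-juggling of the dual-basis sums, much in the spirit of the calculations already performed to establish Eqs.(3.1)–(3.2). Once this disentanglement is done, the forward and backward constructions are inverse bijections, and the equivalence of axioms is complete. As an alternative viewpoint, one can observe that by Proposition 6.12 the monoidal category $\mathcal{C}^C_A(\varphi)$ is identified with the corepresentation category of ${A^{\ast}}^{cop}\o C$, so by Theorem 4.6 a pivotal structure on one side corresponds to an entwined pivotal morphism, and on the other side to a copivot on the Hopf algebra, yielding the claim conceptually.
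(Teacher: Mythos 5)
Your proposal takes essentially the same approach as the paper: the paper's proof consists exactly of the two correspondences you write down, namely the copivot $\gamma \o c \mapsto \gamma(\mathfrak{g}(c))$ on ${A^{\ast}}^{cop}\o C$ in the forward direction and the entwined pivotal morphism $c \mapsto \sum_i \Gamma(e^i \o c)\,e_i$ recovered from a pivotal character $\Gamma$ via dual bases in the converse. In fact the paper states only these two formulas and omits the axiom-matching entirely, so your sketched verification (character property from Eqs.~(4.1)--(4.2), the copivot relation decoupling into Eqs.~(4.3)--(4.4) via Lemma \ref{3.0}) supplies strictly more detail than the published argument.
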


\begin{proof}
If $(C,A,\varphi,\mathfrak{g})$ is a pivotal entwined datum, then the copovit on ${A^{\ast}}^{cop}\o C$ is $\g \o c\mapsto \g(\mathfrak{g}(c))$.

Conversely, if ${A^{\ast}}^{cop}\o C$ is a copivotal Hopf algebra with the pivotal character $\Gamma \in {({A^{\ast}}^{cop}\o C)}^\ast$, then the entwined pivotal morphism of $\mathcal {C}^C_A(\varphi)$ is $c\mapsto \sum \Gamma(e^i \o c)e_i$, where $e_i$ and $e^i$ are dual bases of $A$ and $A^\ast$ respectively.
\end{proof}

\begin{theorem}
Suppose that $(C,A,\vp,R)$ is a double quantum group where $R$ is a map from $C \o C$ to $A \o A$.
Then there exists a $k$-linear map $g:C\rightarrow A$ such that
 $(C,A,\varphi,R,g)$ is a ribbon entwined datum if and only if ${A^{\ast}}^{cop}\o C$ is a coribbon Hopf algebra.
\end{theorem}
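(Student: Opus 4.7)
The plan is to mirror exactly the pattern used in the previous theorem (the pivotal/copivotal correspondence), transferring the category-theoretic criterion of Theorem \ref{5.9} across the monoidal isomorphism of Proposition \ref{5.02}. The key ingredient I will use is the standard dual Tannakian fact: a Hopf algebra $H$ is a coribbon Hopf algebra if and only if its category of (finite-dimensional) comodules is a ribbon category, with the braiding induced by the universal $R$-form and the twist induced by the coribbon form. Since $\mathcal{C}^C_A(\varphi)$ is monoidally isomorphic to the corepresentation category of ${A^{\ast}}^{cop}\o C$ by Proposition \ref{5.02}, these two categories are simultaneously ribbon or not.

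First, for the ``$\Rightarrow$'' direction, starting from a ribbon entwined datum $(C,A,\varphi,R,g)$, I would produce an explicit universal $R$-form $\mathcal{R}$ and coribbon form $\Xi$ on ${A^{\ast}}^{cop}\o C$. The natural candidates, dual to those appearing in Theorem 6.10, are
\begin{eqnarray*}
\mathcal{R}\bigl((\gamma \o c)\o(\delta \o d)\bigr) &:=& \gamma\bigl(R^{(2)}(c\o d)\bigr)\,\delta\bigl(R^{(1)}(c\o d)\bigr),\\
\Xi(\gamma \o c) &:=& \gamma(g(c)).
\end{eqnarray*}
Then I would verify that the axioms of a coribbon Hopf algebra (multiplicativity of $\Xi$ up to $\mathcal{R}\mathcal{R}_{21}$, centrality, $S$-invariance, convolution invertibility) correspond, via the entwined smash coproduct structure in Definition \ref{5.01}, to Eqs.~(5.1)--(5.4) and invertibility of $g$ under the entwined convolution.

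For the ``$\Leftarrow$'' direction, starting from a coribbon form $\Xi$ on ${A^{\ast}}^{cop}\o C$, I would define
\[
g:C \lr A,\qquad g(c):=\sum_i \Xi(e^i \o c)\, e_i,
\]
where $\{e_i\},\{e^i\}$ are dual bases of $A$ and $A^\ast$, and symmetrically recover $R$ from $\mathcal{R}$. Then I would check that the axioms of $\Xi$ force $(C,A,\varphi,R,g)$ to be a ribbon entwined datum. Alternatively — and more economically — one may simply invoke Theorem \ref{5.9} and the Tannakian principle: the existence of the ribbon structure on the corepresentation category of ${A^{\ast}}^{cop}\o C$ given by $\Xi$ pulls back under the isomorphism of Proposition \ref{5.02} to the ribbon structure on $\mathcal{C}^C_A(\varphi)$ given by some $g$ as in Theorem \ref{5.9}.

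The main obstacle will be the verification of the coribbon axiom
\[
\Xi\bigl((\gamma\o c)(\delta\o d)\bigr) = \Xi(\g_1\o c_1)\Xi(\delta_1\o d_1)\mathcal{R}\bigl((\gamma_2\o c_2)\o(\delta_2\o d_2)\bigr)\mathcal{R}\bigl((\delta_3\o d_3)\o(\gamma_3\o c_3)\bigr),
\]
because tracing the comultiplication $\overline{\Delta}$ of ${A^{\ast}}^{cop}\o C$ (which involves the entwining map $\varphi$) through the two instances of $\mathcal{R}$ requires a careful application of (E1)--(E5) together with the bijection in Corollary \ref{5.2} between the entwined convolution product on $\mathrm{Hom}_k(C \o C, A\o A)$ and the usual convolution on $\bigl({A^{\ast}}^{cop}\o C\bigr)^{\o 2}$. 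Once that is unpacked, the identity reduces precisely to Eq.~(5.3). The remaining axioms (centrality matching (5.1)--(5.2), $S$-invariance matching (5.4) via Lemma \ref{3.0}, and convolution invertibility being preserved by the correspondence of Proposition \ref{5.1}) are then routine.
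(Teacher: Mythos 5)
Your proposal takes essentially the same route as the paper: both directions are handled by transporting the criterion of Theorem \ref{5.9} across the monoidal isomorphism of Proposition \ref{5.02}, with the same explicit transfer formulas $\Xi(\gamma \otimes c)=\gamma(g(c))$ and $g(c)=\sum_i \Theta(e^i \otimes c)e_i$ (the paper's proof is in fact terser than your plan, stating these formulas without carrying out any of the axiom verifications you outline). The only divergence is the leg order of the universal $R$-form --- you take $\gamma(R^{(2)}(c\otimes d))\,\delta(R^{(1)}(c\otimes d))$ while the paper writes $(\gamma' \otimes \gamma)R(c\otimes d)$, a flip on the $A\otimes A$ factors --- and your ordering is the one consistent with the braiding (2.1) and with the flipped $R$-matrix $\sum c^i \otimes R^{(2)}(c_i\otimes e_i)\otimes e^i \otimes R^{(1)}(c_i\otimes e_i)$ given for ${C^{\ast}}^{op}\otimes A$ in the algebra-side theorem, so this is at worst a convention discrepancy in the paper rather than a gap in your argument.
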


\begin{proof}
If $(C,A,\varphi,R,g)$ is a ribbon entwined datum, then the
coquasitriangular structure on ${A^{\ast}}^{cop}\o C$ is
\begin{eqnarray*}
\zeta:({A^{\ast}}^{cop}\o C) \o ({A^{\ast}}^{cop}\o C) \rightarrow k,~~\zeta((\g' \o c) \o (\g \o d))\mapsto (\g' \o \g) R(c \o d).
\end{eqnarray*}
And the ribbon character on ${A^{\ast}}^{cop}\o C$ is $\g \o c\mapsto \g(g(c))$.

Conversely, if ${A^{\ast}}^{cop}\o C$ is a coribbon Hopf algebra with the coribbon form $\Theta \in {({A^{\ast}}^{cop}\o C)}^\ast$, then the entwined ribbon morphism of $\mathcal {C}^C_A(\varphi)$ is $c\mapsto \sum \Theta(e^i \o c)e_i$, where $e_i$ and $e^i$ are dual bases of $A$ and $A^\ast$ respectively.
\end{proof}

\section{\textbf{Applications}}

\vskip 0.5cm
\subsection{\textbf{Generalized Long dimodules}}
\def\theequation{7.1.\arabic{equation}}
\setcounter{equation} {0} \hskip\parindent
\vskip 0.5cm

Suppose that $H$ and $B$ are both finite dimensional Hopf algebras over $k$, $M$ is at the same time a right $H$-module and a right $B$-comodule. Recall that $M$ is called a \emph{generalized right-right Long dimodule} (see \cite{fl}) if
$$\rho( m\cdot h) = \sum m_{(0)}\cdot h\otimes m_{(1)}$$
for all $m \in M$ and $h \in H$. The category of generalized right-right Long dimodules and $H$-linear $B$-colinear homomorphisms is denoted by ${\mathcal{L}}_H^B$.
If we define $\dot{\varphi} : B\o H \rightarrow H\o B$ as the flip map in $Vec_k$, then obviously $(B,H,\dot{\varphi})$ is a monoidal entwining datum, and ${\mathcal{L}}^B_H = \mathcal{C}^B_H(\dot{\varphi})$.

Then from Theorem \ref{etD}, we immediately get that the category of generalized Long dimodules is identified to the representations of the Hopf algebra $B^{\ast op} \o H$. Here the bialgebra structure of $B^{\ast op} \o H$ is the ordinary bialgebra structure which is induced by the tensor product of $B^{\ast op}$ and $H$, and the antipode is defined by
$$
\overline{S}(p \o a) = S^{-1}_{B^\ast}(p) \o S_H(a),~~\mbox{where~}p \in B^\ast,~~a \in H.
$$

We can get the following results from Theorem \ref{4.6} and Theorem \ref{5.9}.

\begin{theorem} \label{7.1}
${\mathcal{L}}^B_H$ is a pivotal category if and only if there is a
$k$-linear map $\mathfrak{g}:B\rightarrow H$, such that for any $a,b \in B$ and $h \in H$, the following conditions hold:
\\
$\left\{\begin{array}{l}
(LP1)~~\Delta(\mathfrak{g}(ab))=\mathfrak{g}(a) \o \mathfrak{g}(b);\\
(LP2)~~\varepsilon_H(\mathfrak{g}(1_B))=1_k;\\
(LP3)~~\mathfrak{g}(a)S_H(h) = S^{-1}_H(h) \mathfrak{g}(a);\\
(LP4)~~\mathfrak{g}(a_1) \o S_B(a_2) =  \mathfrak{g}(a_2) \o S^{-1}_B(a_1).
\end{array}\right.$
\\
\end{theorem}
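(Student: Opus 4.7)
The plan is to obtain Theorem 7.1 as a direct specialization of Theorem 4.6 to the particular entwining map $\dot{\varphi}:B\otimes H\to H\otimes B$ given by the flip. First I would verify explicitly that $(B,H,\dot\varphi)$ is a monoidal entwining datum (which is immediate since the flip obviously satisfies (E1)--(E6) when the two bialgebras are unrelated), so that $\mathcal{L}_H^B=\mathcal{C}_H^B(\dot\varphi)$ is a monoidal category in the sense of Section~2.2 and Theorem~4.6 applies. Consequently, $\mathcal{L}_H^B$ is a pivotal category if and only if there is an entwined pivotal morphism $\mathfrak{g}:B\to H$, i.e.\ a $k$-linear map satisfying Eqs.~(4.1)--(4.4).

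The remaining work is then to translate each of Eqs.~(4.1)--(4.4) under the convention $\dot\varphi(a\otimes h)=h\otimes a$, so that $h_\varphi=h$ and $a^\varphi=a$ throughout. Conditions (4.1) and (4.2) collapse literally to (LP1) and (LP2). Condition (4.3) specializes to $\mathfrak{g}(a)S_H^2(h)=h\,\mathfrak{g}(a)$; substituting $h\mapsto S_H^{-1}(h)$ converts this into (LP3), and the reverse substitution shows the two are equivalent. Condition (4.4) specializes to $\mathfrak{g}(a_1)\otimes a_2=\mathfrak{g}(a_2)\otimes S_B^{-2}(a_1)$; applying $S_B$ to the second tensorand gives (LP4), and applying $S_B^{-1}$ reverses the process.

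Putting these translations together gives the equivalence between the existence of a map satisfying (4.1)--(4.4) and the existence of a map satisfying (LP1)--(LP4), and the theorem follows. There is no real obstacle here, since all the hard work (the correspondence $\mathfrak{g}\leftrightarrow\beta$ and the four individual characterizations of monoidality, unitality, $H$-linearity, and $B$-colinearity of the pivotal structure) is already done in Proposition~4.1 and Lemmas~4.2, 4.4, 4.5. The only point deserving explicit care is the use of the bijectivity of $S_H$ and $S_B$ to establish the equivalence of (4.3) with (LP3) and of (4.4) with (LP4); this is precisely where the standing assumption of bijective antipodes on $C=B$ and $A=H$ (introduced at the start of Section~3) is used.
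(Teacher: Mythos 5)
Your proposal is correct and follows exactly the paper's route: the paper also obtains Theorem 7.1 by specializing Theorem 4.6 to the flip entwining map $\dot{\varphi}$ (stating only ``We can get the following results from Theorem 4.6 and Theorem 5.9''), and your translations of Eqs.~(4.1)--(4.4) into (LP1)--(LP4), including the substitution $h\mapsto S_H^{-1}(h)$ for (LP3) and applying $S_B$ to the second tensorand for (LP4), are accurate. You in fact supply more detail than the paper does, correctly flagging where bijectivity of the antipodes is used.
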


\begin{proposition} \label{54}
If $H$ is a pivotal Hopf algebra and $B$ is a copivotal Hopf algebra, then ${\mathcal{L}}^B_H$ is a pivotal category.
\end{proposition}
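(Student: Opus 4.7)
The plan is to apply Theorem \ref{7.1}: it suffices to exhibit a $k$-linear map $\mathfrak{g}:B\to H$ satisfying (LP1)--(LP4). Since $H$ is pivotal, the Example following Theorem \ref{4.6} (with $C=k$, $\varphi=id$) gives a group-like $p\in H$ with $\Delta_H(p)=p\o p$, $\varepsilon_H(p)=1_k$, and $p\,S_H(h)=S_H^{-1}(h)\,p$ for all $h\in H$. Dually, since $B$ is copivotal, there is a character $q\in B^{\ast}$ with $q(ab)=q(a)q(b)$, $q(1_B)=1_k$, and $q(a_1)S_B(a_2)=S_B^{-1}(a_1)q(a_2)$ for all $a,b\in B$. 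The natural candidate, coupling the two pivots, is
\[
\mathfrak{g}:B\longrightarrow H,\qquad \mathfrak{g}(a):=q(a)\,p.
\]

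I would then verify each condition of Theorem \ref{7.1} in turn, each step being a short calculation. For (LP1), use multiplicativity of $q$ together with $\Delta_H(p)=p\o p$ to get $\Delta_H(\mathfrak{g}(ab))=q(a)q(b)(p\o p)=\mathfrak{g}(a)\o\mathfrak{g}(b)$. For (LP2), $\varepsilon_H(\mathfrak{g}(1_B))=q(1_B)\varepsilon_H(p)=1_k$. For (LP3), pull the scalar $q(a)$ past everything and invoke the pivot relation in $H$:
\[
\mathfrak{g}(a)S_H(h)=q(a)\,p\,S_H(h)=q(a)\,S_H^{-1}(h)\,p=S_H^{-1}(h)\,\mathfrak{g}(a).
\]
For (LP4), observe that $q(a_i)$ is a scalar, so the identity to be checked reduces to
\[
p\o q(a_1)S_B(a_2)=p\o q(a_2)S_B^{-1}(a_1),
\]
which is exactly the copivotal relation for $q$ on $B$ (after swapping both sides of the scalar).

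There is no real obstacle: the construction is forced by the structure of the two pieces, and each verification is a direct substitution. The only conceptual point worth flagging explicitly is that $\dot\varphi$ is the flip, so the compatibility conditions (LP3) and (LP4) decouple cleanly into a purely $H$-sided and a purely $B$-sided condition; this is why the product ansatz $\mathfrak{g}(a)=q(a)p$ works without any cross-term correction involving $\dot\varphi$.
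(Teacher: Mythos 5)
Your proposal is correct and matches the paper's proof exactly: the paper also defines $\mathfrak{g}(b):=\varrho(b)\kappa$ (copivot of $B$ times pivot of $H$) and appeals to Theorem \ref{4.6}, merely stating that the verification of Eqs.~(4.1)--(4.4) is easy, whereas you carry out the four checks explicitly. Your added remark that the flip entwining $\dot\varphi$ decouples the conditions into a purely $H$-sided and a purely $B$-sided relation is a fair explanation of why the product ansatz works.
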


\begin{proof}
Suppose the pivot in $H$ is $\kappa$, the copivot on $B$ is $\varrho$. Define a $k$-linear map $\mathfrak{g}:B\rightarrow H$ by
\begin{eqnarray*}
\mathfrak{g}(b):= \varrho(b)\kappa,~~~~\mbox{for~any~}b \in B,
\end{eqnarray*}
it is easy to check that $\mathfrak{g}$ satisfies Eqs.(4.1)-(4.4). Since Theorem \ref{4.6}, the conclusion hold.
\end{proof}

\begin{example}
Let $G$ be a finite group and $H=kG$. Then ${\mathcal{L}}_H^H$ is a pivotal category, and the pivotal structure in ${\mathcal{L}}_H^H$ is totally determined by the center of $G$. Actually, assume that $\mathfrak{g}:H\rightarrow H$ is an entwined pivotal morphism of ${\mathcal{L}}_H^H$, then we have $\Delta(\mathfrak{g}(x)) = \mathfrak{g}(x) \o \mathfrak{g}(1_H)$ because Eq.(4.1), and $\Delta(\mathfrak{g}(x)) = \mathfrak{g}(x) \o \mathfrak{g}(x)$ since $H$ is a group algebra. Thus $\mathfrak{g}(x)=\mathfrak{g}(1_H)$ for all $x \in H$. From Eq.(4.3), one can get that $\mathfrak{g}(1_H)$ is in the center of $H$.
Eq.(4.4) is satisfied naturally.
\end{example}

\begin{example}\label{ldp}
Let $k$ be a field and $H_4$ be the Sweedler's 4-dimensional Hopf algebra in Example \ref{sw}.
Then recall from Theorem \ref{4.6}, it is easy to check that the entwined pivotal morphism $\mathfrak{g}\in hom_k(H_4,H_4)$ is defined as follows
\begin{eqnarray*}
\mathfrak{g}(1_H)=e, ~~\mathfrak{g}(e)=-e, ~~\mathfrak{g}(x)=\mathfrak{g}(y)=0.
\end{eqnarray*}
\end{example}

Note that if $(H,\mathrm{R})$ is a quasitriangular Hopf algebra and $(B,\beta)$ is a coquasitriangular Hopf algebra, then we have a braiding in ${\mathcal{L}}^B_H$:
\begin{equation*}
\begin{split}
\mathbf{C}_{M,N}: M\otimes N &\longrightarrow N\otimes M \\
m\otimes n &\longmapsto \sum \beta(m_{(1)},n_{(1)})n_{(0)}\cdot \mathrm{R}^{(2)}\otimes m_{(0)}\cdot \mathrm{R}^{(1)}.
\end{split}
\end{equation*}
Furthermore, the map $\mathbf{R} \in Hom_k(B \o B, H\o H)$ which make $(B,H,\dot{\varphi},\mathbf{R})$ into a double quantum group is defined as follows
\begin{equation*}
\begin{split}
\mathbf{R}: B\otimes B &\longrightarrow H \otimes H  \\
a \otimes  b &\longmapsto \sum  \beta(a,b) \mathrm{R}^{(2)}  \otimes \mathrm{R}^{(1)}.
\end{split}
\end{equation*}

\begin{theorem} \label{7.2}
Under the condition above, ${\mathcal{L}}^B_H$ is a ribbon category if and only if there is a
$k$-linear map $g:B\rightarrow H$, which is convolution invertible and satisfies the following identities for any $a,b \in B$, $h \in H$:
\\
$\left\{\begin{array}{l}
(LR1)~~g(a)h = h g(a);\\
(LR2)~~g(a_1) \o a_2 = g(a_2) \o a_1;\\
(LR3)~~\Delta(g(ab)) = \beta(a_3,b_3)\beta(b_2,a_2)g(a_1)\mathrm{R}^{(1)}\mathrm{R}'^{(2)} \o g(b_1)\mathrm{R}^{(2)}\mathrm{R}'^{(1)};\\
(LR4)~~g(a) = S^{-1}_Hg S_B(a),\mbox{~or~equivalently,~} S_H \ci g = g \ci S_B.
\end{array}\right.$
\\
\end{theorem}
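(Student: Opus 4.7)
The plan is to regard Theorem 7.2 as a direct specialization of Theorem \ref{5.9} to the monoidal entwining datum $(B,H,\dot{\varphi},\mathbf{R})$ associated with generalized Long dimodules. Since $\mathcal{L}^B_H=\mathcal{C}^B_H(\dot{\varphi})$, Theorem \ref{5.9} tells us that $\mathcal{L}^B_H$ carries a ribbon structure if and only if there exists a convolution invertible $k$-linear map $g:B\to H$ satisfying Eqs.(5.1)--(5.4) with the entwining data $(B,H,\dot{\varphi},\mathbf{R})$. So the task reduces to translating those four general conditions into the explicit axioms (LR1)--(LR4) under the assumption that $\dot{\varphi}$ is the flip and $\mathbf{R}(a\otimes b)=\beta(a,b)\,\mathrm{R}^{(2)}\otimes\mathrm{R}^{(1)}$.

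First I would record the simplifications induced by $\dot{\varphi}=\tau$: every decoration of the form $(\ )_{\varphi},(\ )^{\varphi},(\ )_{\psi},(\ )^{\psi}$ collapses to the plain element, so coefficients labelled by these indices disappear from the formulas. Applying this observation to Eq.(5.1) immediately yields $g(a)h=hg(a)$, which is (LR1); applying it to Eq.(5.2) gives $g(a_1)\otimes a_2=g(a_2)\otimes a_1$, which is (LR2); and applying it to Eq.(5.4) gives $g(c)=S_H^{-1}gS_B(c)$, i.e. $S_H\circ g=g\circ S_B$, which is (LR4).

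The main computation, and the only step requiring genuine book-keeping, is the reduction of Eq.(5.3) to (LR3). Here I would substitute the explicit form of $\mathbf{R}$ and of a second copy $r$ of $\mathbf{R}$, using
\[
R^{(1)}(y_2\otimes x_2)=\beta(y_2,x_2)\mathrm{R}^{(2)},\quad R^{(2)}(y_2\otimes x_2)=\mathrm{R}^{(1)},
\]
\[
r^{(1)}(x_3\otimes y_3)=\beta(x_3,y_3)\mathrm{R}'^{(2)},\quad r^{(2)}(x_3\otimes y_3)=\mathrm{R}'^{(1)},
\]
into the right-hand side of (5.3), and then extract the scalars $\beta(x_3,y_3)\beta(y_2,x_2)$. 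After relabelling the two dummy copies $\mathrm{R}\leftrightarrow\mathrm{R}'$ (which amounts to renaming summation indices in a tensor expression), the right-hand side becomes precisely
\[
\beta(a_3,b_3)\beta(b_2,a_2)\,g(a_1)\mathrm{R}^{(1)}\mathrm{R}'^{(2)}\otimes g(b_1)\mathrm{R}^{(2)}\mathrm{R}'^{(1)},
\]
establishing (LR3). The convolution invertibility hypothesis on $g$ in (5.3) transfers verbatim to the assertion in the theorem.

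The main obstacle, and essentially the only one, is keeping the multiple summation indices straight in the translation of (5.3): one must correctly identify which output tensor slot of the $R$-matrix corresponds to each leg, since $\mathbf{R}$ carries an intrinsic swap $\mathrm{R}^{(2)}\otimes\mathrm{R}^{(1)}$. Once that bookkeeping is done and the relabelling of $\mathrm{R},\mathrm{R}'$ is performed, the equivalence of (5.1)--(5.4) with (LR1)--(LR4) is immediate, and Theorem \ref{5.9} closes the argument.
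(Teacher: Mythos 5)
Your proposal is correct and is exactly the paper's (unwritten) argument: the paper derives Theorem \ref{7.2} by specializing Theorem \ref{5.9} to the datum $(B,H,\dot{\varphi},\mathbf{R})$ with $\dot{\varphi}=\tau$, under which Eqs.~(5.1), (5.2), (5.4) collapse to (LR1), (LR2), (LR4) and Eq.~(5.3) reduces to (LR3) after substituting $\mathbf{R}(a\o b)=\beta(a,b)\,\mathrm{R}^{(2)}\o\mathrm{R}^{(1)}$ and relabelling the two copies of $\mathrm{R}$. Your bookkeeping in the (5.3)$\Rightarrow$(LR3) step checks out, including the placement of the scalars $\beta(a_3,b_3)\beta(b_2,a_2)$.
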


\begin{proposition} \label{lonrib}
If $H$ is a ribbon Hopf algebra and $B$ is a coribbon Hopf algebra, then ${\mathcal{L}}^B_H$ is a ribbon category.
\end{proposition}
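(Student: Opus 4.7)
The plan mirrors Proposition \ref{54}: produce an entwined ribbon morphism $g : B \to H$ out of the ribbon and coribbon data, and then invoke Theorem \ref{7.2}. Let $\nu \in H$ be the ribbon element of $H$ (so $\nu$ is central and invertible, $\Delta(\nu) = (\nu \otimes \nu)\mathrm{R}_{21}\mathrm{R}$, and $S_H(\nu) = \nu$), and let $\omega \in B^\ast$ be the coribbon form on $B$ (so $\omega$ is convolution invertible, $\omega(b_1)b_2 = b_1\omega(b_2)$, $\omega$ is multiplicative up to the coribbon defect involving $\beta$, and $\omega \circ S_B = \omega$). Define
\[
g : B \to H, \qquad g(b) := \omega(b)\,\nu.
\]

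The first step is to check convolution invertibility in the entwined sense. Because $\dot{\varphi}$ is the flip, the entwined convolution on $\mathrm{Hom}_k(B,H)$ reduces to the opposite convolution $(g' \star g)(b) = g(b_2)g'(b_1)$. Setting $g'(b) := \omega^{-1}(b)\,\nu^{-1}$ (where $\nu^{-1}$ is the algebra inverse of $\nu$ and $\omega^{-1}$ the convolution inverse of $\omega$) and using centrality of $\nu$ gives $(g' \star g)(b) = \omega(b_2)\omega^{-1}(b_1)\,1_H = \varepsilon_B(b)\,1_H$, and symmetrically on the other side.

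Next I would dispatch (LR1), (LR2), (LR4) by direct substitution. Condition (LR1) is immediate from centrality of $\nu$. Condition (LR2) is the identity $\omega(a_1)\nu \otimes a_2 = \omega(a_2)\nu \otimes a_1$, which is precisely the coribbon identity $\omega(a_1)a_2 = a_1\omega(a_2)$ tensored with $\nu$. Condition (LR4) reads $\omega(a)\nu = S_H^{-1}\bigl(\omega(S_B(a))\nu\bigr) = \omega(S_B(a))\,S_H^{-1}(\nu)$, and both $\omega \circ S_B = \omega$ and $S_H^{-1}(\nu) = \nu$ (from $S_H(\nu)=\nu$) combine to give equality.

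The main obstacle, as always, is the comultiplication identity (LR3). Here I would compute
\[
\Delta(g(ab)) = \omega(ab)\,\Delta(\nu) = \omega(ab)\,(\nu \otimes \nu)\,\mathrm{R}_{21}\mathrm{R},
\]
expand $\omega(ab)$ via the coribbon defect $\omega(ab) = \omega(a_1)\omega(b_1)\beta(a_2,b_2)\beta(b_3,a_3)$, and then use centrality of $\nu$ to slide it past the $\mathrm{R}$-factors. Matching the result to $\beta(a_3,b_3)\beta(b_2,a_2)\,g(a_1)\mathrm{R}^{(1)}\mathrm{R}'^{(2)} \otimes g(b_1)\mathrm{R}^{(2)}\mathrm{R}'^{(1)}$ is then a purely book-keeping exercise in the indices of $\mathrm{R}_{21}\mathrm{R}$ and of the two copies of $\beta$, parallel to the way the ordinary ribbon element of a Drinfeld double is built from a ribbon and a coribbon datum. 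Once (LR1)--(LR4) and invertibility are all verified, Theorem \ref{7.2} gives the ribbon structure on ${\mathcal{L}}^B_H$.
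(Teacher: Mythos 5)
Your proposal is correct and is essentially the paper's own proof: the paper likewise defines $g(b):=\zeta(b)\xi$ (ribbon element times coribbon character, your $\omega(b)\nu$) and verifies that it is an entwined ribbon morphism, invoking Theorem \ref{5.9} where you invoke its Long-dimodule specialization Theorem \ref{7.2} --- the same conditions in equivalent form. Your write-up merely supplies the verification details (entwined convolution reducing to opposite convolution under the flip, centrality of $\nu$, the coribbon defect for (LR3)) that the paper compresses into ``it is easy to check.''
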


\begin{proof}
Suppose the ribbon element in $H$ is $\xi$, the ribbon character on $B$ is $\zeta$. Define a $k$-linear map $g:B\rightarrow H$ by
\begin{eqnarray*}
g(b):= \zeta(b)\xi, ~~~~\mbox{for~any~}b \in B,
\end{eqnarray*}
it is easy to check that $g$ satisfies Eqs.(5.1)-(5.4). Since Theorem \ref{5.9}, the conclusion hold.
\end{proof}

\vskip 0.5cm
\subsection{\textbf{Yetter-Drinfeld modules}}
\def\theequation{7.2.\arabic{equation}}
\setcounter{equation} {0} \hskip\parindent
\vskip 0.5cm

 Let $H$ be a finite dimensional Hopf algebra over $k$. Recall that if $M$ is both a right $H$-module and a right $B$-comodule, and satisfies
$$ \rho (m\cdot h) = \sum m_{(0)}\cdot a_2\otimes S(a_1)m_{(1)}a_3$$
for any $h\in H$, $m\in M$, then $M$ is a \emph{right-right Yetter-Drinfeld module}. The category of Yetter-Drinfeld modules and $H$-linear $H$-colinear homomorphisms is denoted by ${\mathcal{YD}}_H^H$.

If we define
\begin{equation*}
\begin{split}
\ddot{\varphi}: H\otimes H  &\longrightarrow H\otimes H \\
c\otimes a &\longmapsto a_{\ddot{\varphi}} \otimes c^{\ddot{\varphi}} := \sum a_2\otimes S(a_1) c a_3.
\end{split}
\end{equation*}
It is straightforward to show $\ddot{\varphi}$ is a right-right entwining structure, and $\mathcal {C}^H_H(\ddot{\varphi}) = \mathcal{YD}^H_H$.
Further,
it is obviously to see that the entwined smash product $H^{\ast op} \o H$ is the Drinfeld double of $H$, and the entwined smash coproduct of $(H,H,\ddot{\varphi})$ is the \emph{Drinfeld codouble} (see \cite{sm2}, Section 10) of $H$.

We can get the following results from Theorem \ref{4.6} and Theorem \ref{5.9}.

\begin{theorem} \label{ydpr}
\emph{(1)} $\mathcal{YD}^H_H$ is a pivotal category if and only if there is a
$k$-linear map $\mathfrak{g}:H\rightarrow H$, such that for any $a,b \in H$, the following conditions hold:
\\
$\left\{\begin{array}{l}
(YP1)~~\Delta(\mathfrak{g}(ab))=g(a) \o g(b);\\
(YP2)~~\varepsilon(\mathfrak{g}(1_H))=1_k;\\
(YP3)~~\mathfrak{g}(b)S^2(a) = a_2 \mathfrak{g}(S(a_1) b a_3 );\\
(YP4)~~\mathfrak{g}(a_1) \o S^2(a_2) =  \mathfrak{g}(a_2) \o S(g(1_H)) a_1 g(1_H).
\end{array}\right.$
\\

\emph{(2)} Since $\mathcal{YD}^H_H$ is a braided category with the braiding
\begin{eqnarray*}
t_{M,N}:M \otimes N\rightarrow N\otimes M,~~m\otimes n \mapsto n_{(0)}\otimes m \cdot n_{(1)}, \mbox{~where~}M,N \in \mathcal{YD}^H_H, ~m \in M,~n \in N,
\end{eqnarray*}
we immediately get that $(H,H,\ddot{\varphi},\mathbf{R})$ is a double quantum group, where $\mathbf{R}$ is defined by
\begin{equation*}
\begin{split}
\mathbf{R}: H\otimes H &\longrightarrow H \otimes H  \\
a \otimes  b &\longmapsto 1_H  \otimes \v(a) b.
\end{split}
\end{equation*}
Then $\mathcal{YD}^H_H$ is a ribbon category if and only if there is a
$k$-linear map $g:H\rightarrow H$, which is convolution invertible and satisfies the following identities for any $a,b\in H$:
\\
$\left\{\begin{array}{l}
(YR1)~~g(b) a = a_2 g( S(a_1) b a_3 );\\
(YR2)~~g(a_1) \o a_2 = {g(a_2)}_2 \o S({g(a_2)}_1)h_1({g(a_2)}_3);\\
(YR3)~~\Delta(g(ab)) = g(a_1)b_3 \o g(b_1)S(b_2)a_2b_4;\\
(YR4)~~g(b) = \sum {a_i}_2 a^i( S^{-1} g S ( S({a_i}_1) b {a_i}_3 ) ) ,
\end{array}\right.$
\\
where $a_i$ and $a^i$ are bases of $A$ and $A^\ast$ respectively, dual to each other.
\end{theorem}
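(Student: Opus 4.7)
The proof specializes Theorems~\ref{4.6} and \ref{5.9} to the Yetter-Drinfeld entwining $\ddot{\vp}(c\o a) = a_2 \o S(a_1) c a_3$. First one verifies that $(H,H,\ddot{\vp})$ is a monoidal entwining datum and that, together with $\mathbf{R}(a\o b) = 1_H \o \v(a)b$, it forms a double quantum group: these are routine checks of the axioms $(E0)$--$(E10)$ from the Hopf algebra axioms of $H$. The resulting braided monoidal category $\mathcal{C}^H_H(\ddot{\vp})$ agrees with $\mathcal{YD}^H_H$ and its standard braiding $t_{M,N}(m\o n) = n_0\o m\cdot n_1$, as one sees by substituting $\mathbf{R}$ into the general formula $\mathbf{C}_{M,N}(m\o n) = (n_0\o m_0)\cdot R(m_1\o n_1)$ from Section~2.

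For part~(1), substitute $\sum a_{\vp} \o c^{\vp} = a_2 \o S(a_1)ca_3$ into the defining equations (4.1)--(4.4) of an entwined pivotal morphism. Equations (4.1) and (4.2) depend on $\mathfrak{g}$ alone and reproduce $(YP1)$ and $(YP2)$ verbatim. Equation (4.3) becomes $\mathfrak{g}(b)S^2(a) = a_2\mathfrak{g}(S(a_1)ba_3)$, which is $(YP3)$. For (4.4), applying $S^2$ to the second tensor factor to cancel the $S^{-2}$, and then substituting $\ddot{\vp}$, gives $\mathfrak{g}(a_1)\o S^2(a_2) = \mathfrak{g}(a_2)_2 \o S(\mathfrak{g}(a_2)_1)a_1\mathfrak{g}(a_2)_3$. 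Combined with the observation that $(YP1)$ and $(YP2)$ together force $\mathfrak{g}(1_H)$ to be grouplike and $\Delta(\mathfrak{g}(x)) = \mathfrak{g}(x)\o\mathfrak{g}(1_H) = \mathfrak{g}(1_H)\o\mathfrak{g}(x)$, the three-fold coproduct of $\mathfrak{g}(a_2)$ reduces to $\mathfrak{g}(a_2)\o\mathfrak{g}(1_H)\o\mathfrak{g}(1_H)$, yielding $(YP4)$.

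For part~(2), substitute $\ddot{\vp}$ and $\mathbf{R}$ into the defining equations (5.1)--(5.4). Equation (5.1) collapses to $(YR1)$ at once, and equation (5.2) yields $(YR2)$ directly. Equation (5.3) is the main calculation: since $R^{(1)} = 1_H$ and $R^{(2)}(a\o b) = \v(a)b$, most factors vanish by the counit axioms together with $(E3)$ (which gives $1_{\vp}\o y_2^{\vp} = 1_H\o y_2$); after this collapse only the entwining $\psi$ applied to $(x_2,y_3)$ survives, and renumbering the iterated coproducts produces the four factors of $b$ on the right-hand side of $(YR3)$. Finally, equation (5.4), rewritten as (5.5) using dual bases $\{a_i\},\{a^i\}$ of $H$ and $H^*$, becomes $(YR4)$ upon replacing $(a_i)_{\vp} = (a_i)_2$ and $c^{\vp} = S((a_i)_1)c(a_i)_3$.

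The principal obstacle is the simplification of equation (5.3): the iterated coproduct bookkeeping demands careful use of $(E3)$, the counit axioms, and coassociativity to arrive at the clean expression $g(a_1)b_3\o g(b_1)S(b_2)a_2b_4$. A secondary subtlety is that $(YP4)$ in its stated form requires first deducing from $(YP1)$ and $(YP2)$ that $\mathfrak{g}(1_H)$ is grouplike, so as to factor the iterated coproduct of $\mathfrak{g}(a_2)$ into copies of $\mathfrak{g}(1_H)$ on the outer slots. With these calculations carried out, both parts of the theorem follow as direct specializations of Theorems~\ref{4.6} and \ref{5.9}.
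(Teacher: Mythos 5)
Your proposal is correct and takes essentially the same route as the paper: the paper's entire ``proof'' is the one-line assertion that the theorem follows from Theorems \ref{4.6} and \ref{5.9} by specializing to $\ddot{\varphi}(c \o a) = a_2 \o S(a_1)ca_3$ and $\mathbf{R}(a\o b)=1_H \o \varepsilon(a)b$, and you simply carry out those substitutions explicitly (including the reduction of the iterated coproduct of $\mathfrak{g}(a_2)$ via the grouplike element $\mathfrak{g}(1_H)$ that is needed to put $(YP4)$ in its stated form, and the collapse of Eq.(5.3) to $(YR3)$), all of which checks out.
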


Recall from \cite{Kassel} (or \cite{sm2}) that ${\mathcal{YD}}^H_H$ is equivalent to the center of the category of $Rep(H)$, we immediately get the following property.

\begin{proposition}
If $H$ is a pivotal Hopf algebra or a copivotal Hopf algebra, then ${\mathcal{YD}}^H_H$ is a pivotal category.
\end{proposition}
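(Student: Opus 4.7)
The plan is to apply Theorem \ref{ydpr}(1), so the task reduces to producing, in each case, a $k$-linear map $\mathfrak{g}\colon H\to H$ satisfying the four axioms (YP1)--(YP4). My candidates are the obvious ones suggested by the structure of the Examples in Section~4 and the construction in Proposition \ref{54}: if $H$ is pivotal with pivot $\kappa\in H$, set $\mathfrak{g}(h):=\varepsilon(h)\kappa$; if $H$ is copivotal with copivot $\rho\in H^{\ast}$, set $\mathfrak{g}(h):=\rho(h)1_H$. In both cases, the coalgebra/counit axioms (YP1) and (YP2) are immediate: group-likeness of $\kappa$ (respectively the algebra-map property of $\rho$) gives $\Delta(\mathfrak{g}(ab))=\mathfrak{g}(a)\otimes\mathfrak{g}(b)$, and $\varepsilon(\kappa)=1=\rho(1_H)$.

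For the pivotal case, I would first extract the working identity $\kappa\,S^{2}(a)=a\,\kappa$ by applying $S$ to the defining relation $\kappa S(a)=S^{-1}(a)\kappa$ and using $S(\kappa)=\kappa^{-1}$. Then (YP3) collapses to $\varepsilon(b)a\kappa=\varepsilon(b)a\kappa$, since $\mathfrak{g}(S(a_1)ba_3)=\varepsilon(a_1)\varepsilon(b)\varepsilon(a_3)\kappa=\varepsilon(b)\varepsilon(a)\kappa$. For (YP4), both sides simplify to $\kappa\otimes S^{2}(a)$: the left-hand side by the counit axiom and the right-hand side because $S(\mathfrak{g}(1_H))a_1\mathfrak{g}(1_H)=\kappa^{-1}a\kappa=S^{2}(a)$.

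For the copivotal case, the technical heart is the identity $\rho(a_1)S^{2}(a_2)=a_1\rho(a_2)$, equivalently $S^{2}=\rho^{-1}\ast\mathrm{id}\ast\rho$, obtained by applying $S$ to the defining relation $\rho(a_1)S(a_2)=S^{-1}(a_1)\rho(a_2)$ (using that $\rho^{-1}=\rho\circ S$) and then convolving on the left with $\rho^{-1}$. Granting this, (YP3) reads
\[
\rho(b)S^{2}(a)\;=\;a_2\,\rho(S(a_1))\rho(b)\rho(a_3)\;=\;\rho(b)\,\rho^{-1}(a_1)a_2\rho(a_3),
\]
which holds by the identity just established; and (YP4) reduces to $\rho(a_1)S^{2}(a_2)\otimes 1_H=\rho(a_2)a_1\otimes 1_H$, again the same identity after moving the scalar $\rho(a_2)$ across the tensor.

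The main obstacle, such as it is, is precisely this rearrangement in the copivotal case: the copivot relation is stated as $\rho(c_1)S(c_2)=S^{-1}(c_1)\rho(c_2)$, but what is actually needed for both (YP3) and (YP4) is the dual form $\rho(c_1)S^{2}(c_2)=c_1\rho(c_2)$. Once this bridge is in place, every remaining verification is a routine application of $\varepsilon\circ\mathfrak{g}=\mathrm{id}$ (pivotal case) or of the scalar-valuedness of $\rho$ (copivotal case), so no additional structural ingredient is required.
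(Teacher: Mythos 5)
Your proposal is correct and takes essentially the same route as the paper: the paper's proof consists precisely of defining $\mathfrak{g}(x)=\varepsilon(x)\kappa$ in the pivotal case and $\mathfrak{g}'(x)=\varrho(x)1_H$ in the copivotal case and asserting that the conditions (4.1)--(4.4) (equivalently (YP1)--(YP4)) hold by direct computation, which you have simply carried out explicitly via the identities $\kappa S^2(a)=a\kappa$ and $\varrho(a_1)S^2(a_2)=a_1\varrho(a_2)$. The only cosmetic slip is writing $\mathfrak{g}(S(a_1)ba_3)=\varepsilon(b)\varepsilon(a)\kappa$ before the outer factor $a_2$ has been absorbed, which does not affect the argument.
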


\begin{proof}
If $H$ is a pivotal Hopf algebra with the pivot $\kappa$, then we can define a $k$-linear map
$\mathfrak{g}:H\rightarrow H$ via $\mathfrak{g}(x) = \varepsilon(x)\kappa$. It is a direct computation to check that $\mathfrak{g}$ is an entwined pivotal morphism of ${\mathcal{YD}}^H_H$.

Similarly, if $H$ is a copivotal Hopf algebra with the pivotal character $\varrho$, then define
$\mathfrak{g}':H\rightarrow H$ by $\mathfrak{g}'(x) = \varrho(x)1_H$. It is easy to check that $\mathfrak{g}'$ also satisfies Eqs.(4.1)-(4.4).
\end{proof}

\begin{example}\label{ydp}
Consider the Sweedler's 4-dimensional Hopf algebra.
It is straightly to check that the entwined pivotal morphisms in ${\mathcal{YD}}^{H_4}_{H_4}$ are given by
\begin{eqnarray*}
\mathfrak{g}_1(1_H)=1_H, ~~\mathfrak{g}_1(e)=-1_H, ~~\mathfrak{g}_1(x)=\mathfrak{g}_1(y)=0;\\
\mathfrak{g}_2(1_H)=e, ~~\mathfrak{g}_2(e)=e, ~~\mathfrak{g}_2(x)=\mathfrak{g}_2(y)=0.
\end{eqnarray*}
\end{example}

\begin{proposition}
If $H$ is a ribbon Hopf algebra or a coribbon Hopf algebra, then ${\mathcal{YD}}^H_H$ is a ribbon category.
\end{proposition}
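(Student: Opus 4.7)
The plan is to mimic both the pivotal analogue proved just above and Proposition \ref{lonrib}: I would exhibit an explicit entwined ribbon morphism $g\in Hom_k(H,H)$ in each case and then invoke Theorem \ref{ydpr}(2). Explicitly, if $H$ is ribbon with ribbon element $\nu\in H$ (central, with $S(\nu)=\nu$ and $\Delta(\nu)=(\nu\otimes\nu)(R_{21}R)$), I would set $g(x):=\varepsilon(x)\nu$; the resulting twist is $\theta_M(m)=m_0\cdot g(m_1)=m\cdot\nu$, that is, right multiplication by $\nu$. Dually, if $H$ is coribbon with ribbon character $\omega\in H^*$, I would set $g(x):=\omega(x)\,1_H$, giving $\theta_M(m)=\omega(m_1)\,m_0$. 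Convolution invertibility is immediate in both cases ($g^{-1}(x)=\varepsilon(x)\nu^{-1}$ and $g^{-1}(x)=\omega^{-1}(x)\,1_H$, respectively).

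In each case I would then verify the four conditions (YR1)--(YR4) of Theorem \ref{ydpr}(2). Conditions (YR1), (YR2) and (YR4) are essentially book-keeping. In the ribbon case, (YR1) reads $g(b)a=a_2\,g(S(a_1)ba_3)$; after substituting $g=\varepsilon(\cdot)\nu$, both sides reduce to $\varepsilon(b)\,a\nu$ by centrality of $\nu$ and the standard identity $\sum\varepsilon(a_1)a_2\varepsilon(a_3)=a$. Condition (YR2) collapses similarly into an identity of the form $\nu\otimes a=\nu_{(2)}\otimes S(\nu_{(1)})\,a\,\nu_{(3)}$, which again uses the compatibility of $\nu$ with $\Delta$. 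Condition (YR4) follows directly from $S(\nu)=\nu$ together with the dual-basis identity $\sum_i a^i(\nu)a_i=\nu$. The coribbon case is strictly dual, using $\omega\circ S=\omega$ and $\omega(c_1)c_2=c_1\omega(c_2)$ throughout.

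The main obstacle is (YR3). Substituting the explicit YD double quantum group data -- namely $\varphi(c\otimes a)=a_2\otimes S(a_1)ca_3$ together with $R(a\otimes b)=1_H\otimes\varepsilon(a)b$ -- into the right-hand side of (YR3), most factors collapse because $g$ is concentrated on the counit (respectively, the unit), and what survives reassembles precisely into the product $(\nu\otimes\nu)(R_{21}R)$ (respectively, the coribbon expression $\omega(a_1)\omega(b_1)\beta(a_2\otimes b_2)\beta(b_3\otimes a_3)$). Thus (YR3) reduces to the defining comultiplication identity of the ribbon element (respectively, to the defining multiplicativity of the ribbon character), completing the argument via Theorem \ref{ydpr}(2).
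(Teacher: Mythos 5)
Your candidate morphisms are exactly the paper's own (the paper sets $g(x)=\varepsilon(x)\xi$ in the ribbon case and $g'(x)=\zeta(x)1_H$ in the coribbon case and asserts the rest is ``a direct computation''), so your construction matches the paper's route. But your verification sketch contains a genuine error: you have imported the braiding of the Long-dimodule section into the Yetter--Drinfeld setting. In Theorem \ref{ydpr}(2) the double quantum group datum is $\mathbf{R}(a\otimes b)=1_H\otimes\varepsilon(a)b$, i.e.\ the braiding is the canonical one $m\otimes n\mapsto n_{(0)}\otimes m\cdot n_{(1)}$; no quasitriangular structure $\mathrm{R}$ of $H$ enters anywhere, and accordingly (YR3) reads $\Delta(g(ab))=g(a_1)b_3\otimes g(b_1)S(b_2)a_2b_4$ with no $\mathrm{R}$-factors at all. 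Substituting $g=\varepsilon(\cdot)\nu$, the right-hand side becomes $\nu b_2\otimes\nu S(b_1)ab_3$, so (YR3) would demand $\varepsilon(a)\varepsilon(b)\Delta(\nu)=\nu b_2\otimes\nu S(b_1)ab_3$ for all $a,b$ --- and the $a,b$-dependence does not collapse: apply $\varepsilon\otimes \mathrm{id}$ and set $b=1_H$, and you are forced to $a=\varepsilon(a)1_H$. In particular nothing ``reassembles into $(\nu\otimes\nu)(R_{21}R)$''; that computation belongs to ${\mathcal{L}}^B_H$ (Proposition \ref{lonrib} and Theorem \ref{7.2}), where the braiding genuinely uses $\mathrm{R}$ and $\beta$. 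Likewise your (YR2) reduction requires $\nu\otimes a=\nu_2\otimes S(\nu_1)a\nu_3$ for all $a\in H$, which is not a consequence of the ribbon axioms (it holds for a central group-like $\nu$, but a ribbon element is not group-like in general). Your checks of (YR1) and (YR4) are fine --- centrality and $S(\nu)=\nu$ do suffice there --- so the gap is concentrated precisely in (YR2) and (YR3), and dually in the coribbon case, where (YR3) with $g'=\omega(\cdot)1_H$ would force $\omega(a_1)a_2=\omega(a)1_H$ upon setting $b=1_H$, again false in general.

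To be fair, the paper's own proof consists of the same candidate plus an unverified ``direct computation,'' so you have reproduced its construction faithfully; but a proof must close the two hard conditions, and your attempted closure substitutes the wrong braiding to make them disappear. Done against the correct conditions (YR2)--(YR3), the computation shows that the twist $\theta_M(m)=m\cdot\nu$ is in general not compatible with the canonical Yetter--Drinfeld braiding, so the argument as you (and, implicitly, the paper) give it does not go through without further hypotheses on $\nu$ (e.g.\ group-likeness) or a different choice of $g$; this is consistent with the fact that ribbonness of ${\mathcal{YD}}^H_H\simeq {}_{D(H)}\mathcal{M}$ is governed by Kauffman--Radford-type conditions on $D(H)$ rather than by $H$ being ribbon alone.
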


\begin{proof} If the ribbon element in $H$ is $\xi$, then we can define a $k$-linear map
$g:H\rightarrow H$ via $g(x) = \varepsilon(x)\xi$. It is a direct computation to check that $g$ is an entwined ribbon morphism of ${\mathcal{YD}}^H_H$.

Similarly, if the ribbon character of $H$ is $\zeta$, then we can define define
$g':H\rightarrow H$ by $g'(x) = \zeta(x)1_H$. It is easy to check that $g'$ also satisfies Eqs.(5.1)-(5.4).
\end{proof}

\

\begin{center}
 {\bf ACKNOWLEDGEMENT}
\end{center}
The work was partially supported by the NSF of China (NO. 11371088), and the NSF of Qufu Normal University (NO. xkj201514).
\\


\begin{thebibliography}{99}
{
\bibitem{AAIT}\label{AAIT} Andruskiewitsch N, Angiono I, Iglesias A G, Torrecillas B and Vay C. (2014) From Hopf algebras to tensor categories. In: \emph{Conformal Field Theories and Tensor Categories}, pp: 1-31. Springer Berlin Heidelberg.


\bibitem{BJW}\label{BJW} Barrett J W, Westbury B W. (1999) Spherical Categories. \emph{Adv. Math.} 143: 357-375.

\bibitem{jb}\label{jb} Beck J. (1969) Distributive laws. \emph{Lect. Notes Math.} 80, 119-140.

\bibitem{jB}\label{jB} Bichon J. (2001) Cosovereign Hopf algebras. \emph{J. Pure Appl. Algebra} 157(2-3): 121-133.

\bibitem{ABAV}\label{ABAV} Brugui\`{e}res A, Virelizier A. (2007) Hopf monads. \emph{Adv. Math.} 215(2): 679-733.

\bibitem{BT}\label{BT} Brzezi\'{n}ski T. (1999) On modules associated to coalgebra Galois extensions. \emph{J. Algebra} 215: 290-317.

\bibitem{BT2}\label{BT2} Brzezi\'{n}ski T. (2001) The cohomology structure of an algebra entwined with a coalgebra. \emph{J. Algebra} 235: 176-202.

\bibitem{ts}\label{ts} Brzezi\'{n}ski T, Majid S. (1998) Coalgebra bundles. \emph{Comm. Math. Phys.} 191: 467-492.


\bibitem{sgz}\label{sgz} Caenepeel S, Ion B, Militaru G, Zhu S L. (2000) The factorization problem and the smash
biproduct of algebras and coalgebras. \emph{Algebr. Represent. Theory} 3(1):19-42.

\bibitem{sgz1}\label{sgz1} Caenepeel S, Militaru G, Zhu S L. (2002) Frobenius and separable functors for generalized module categories and nonlinear equations. Lecture Notes in Mathematics, Vol 1787, Springer, Berlin.

\bibitem{csv}\label{csv} \u{C}ap A, Schichl H, Van\u{C}ura J. (1995) On twisted tensor product of algebras. \emph{Comm. Algebra}
23: 4701-4735.

\bibitem{Etingof}\label{Etingof} Etingof P, Gelaki S, Nikshych D, Ostrik V. Tensor categories. In: \emph{Lecture Note for the MIT Course.} Available at: www-math.mit.edu/etingof/tenscat.pdf

\bibitem{PD0}\label{PD0} Freyd P J, Yetter D N. (1989) Braided compact closed categories with applications to low
dimensional topology. \emph{Adv. Math.} 77: 156-182.

\bibitem{PD}\label{PD} Freyd P J, Yetter D N. (1992) Coherence theorems via knot theory. \emph{J. Pure Appl. Algebra} 78(1): 49-76.


\bibitem{DHBP}\label{DHBP} Hobst D, Pareigis B. (2001) Double quantum groups. \emph{J. Algebra} 242(2): 460-494.


\bibitem{ar}\label{ar} Joyal A, Street R. (1991) An introduction to Tannaka duality and quantum groups. Lecture Notes in Mathematics,
Vol. 1488, Springer, Berlin.


\bibitem{Kassel}\label{Kassel} Kassel C. (1995) Quantum Groups, Springer-Verlag, New York.

\bibitem{ld}\label{ld} Kauffman L H, Radford D E. (1993). A Necessary and Sufficient Condition for a Finite-Dimensional Drinfel'd Double to Be a Ribbon Hopf Algebra. \emph{J. Algebra} 159: 98-114.

\bibitem{fl}\label{fl} Long F. (1974) The Brauer group of dimodule algebras. \emph{J. Algebra} 30: 559-601.

\bibitem{sm}\label{sm} Majid S. (1995) Foundations of quantum group theory. Cambridge University Press, Cambridge.


\bibitem{gm}\label{gm} Maltsiniotis G. (1995) Traces dans les catgories monoidales, dualit et catgories monoidales fibres. Cahiers de topologie et gomtrie differentielle catgoriques, 195-288.


\bibitem{sm2}\label{sm2} Montgomery S. (1993) Hopf algebras and their actions on rings. CMBS Reg. Conf. Ser. In Math. 82, Am. Math. Soc., Providence.


\bibitem{MB}\label{MB} Mesablishvili B. (2008) Entwining structures in monoidal categories. \emph{J. Algebra} 319(6): 2496-2517.

\bibitem{Ng}\label{Ng} Ng S H. (2012). A Note on Frobenius-Schur Indicators. In: \emph{Proceedings Of The International Conference On Algebra 2010: Advances in Algebraic Structures}, pp: 454-460.

\bibitem{TRV}\label{TRV} Reshetikhin N. Y, Turaev V. G. (1990) Ribbon Graphs and Their Invariants Derived from Quantum Groups. \emph{Comm. Math. Phys.} 127: 1-26.

\bibitem{SC}\label{SC} Schauenburg P. (2004) On the Frobenius-Schur indicators for quasi-Hopf algebras, \emph{J. Algebra} 282(1): 129-139.


\bibitem{Ps}\label{Ps} Selinger P. (2009) A survey of graphical languages for monoidal categories. e-Print, ArXiv: math.QA/0908.3347.

\bibitem{sk1}\label{sk1} Shimizu K. (2015) On indicators of Hopf algebras. \emph{Israel J. Math.} 207(1):155¨C201.

\bibitem{sk2}\label{sk2} Shimizu K. (2015) The pivotal cover and Frobenius¨CSchur indicators. \emph{J. Algebra} 428: 357¨C
402.

\bibitem{sk3}\label{sk3} Shimizu K. (2016) Pivotal structures of the Drinfeld center of a finite tensor category. e-Print. \emph{arxiv}: 1608.05905v1.

\bibitem{td}\label{td} Tambara D. (1990) The coendomorphism
bialgebra of an algebra. \emph{J. Fac. Sci. Univ. Tokyo Sect. IA, Math.} 37: 425-456.


\bibitem{ul}\label{ul} Ulbrich K. H. (1990) On Hopf algebras and rigid monoidal categories. \emph{Israel J. Math.} 72: 252¨C256.


\bibitem{DY}\label{DY} Yetter D. N. (1992) Framed tangles and a theorem of Deligne on braided deformations of tannakian categories, in Deformation theory and quantum groups with applications
to mathematical physics, \emph{Contemporary Mathematics} 134: 325-345.

\bibitem{ZZW}\label{ZZW} Zhang X H, Zhao X F, Wang S H. (2015) Sovereign and ribbon weak Hopf algebras. \emph{Kodai Math. J.} 38(2): 451-469.

}

\end{thebibliography}
 \end{document}